\theoremstyle{plain}
\newtheorem{thm}{Theorem}[section]
\newtheorem{cor}[thm]{Corollary}
\newtheorem{lem}[thm]{Lemma}
\newtheorem{prop}[thm]{Proposition}
\newtheorem{definition}[thm]{Definition}
\newtheorem{remark}[thm]{Remark}
\newcommand\dist{{\operatorname{dist}}}
\newcommand\Index{{\operatorname{Index}}}
\newcommand\id{{\operatorname{id}}}
\newcommand{\eps}{\varepsilon}
\title[Two Families of Examples of Groups Acting on Trees]{Two Families of Examples of Groups Acting on Trees with Nontrivial Quasi-Kernels}
\author{Nikolay A. Ivanov}
\address{Faculty of Mathematics and Informatics\\University of Sofia\\blvd.\ James Bourchier 5\\BG-1164 Sofia\\Bulgaria}
\email{nivanov@fmi.uni-sofia.bg}
\subjclass[2010]{22D25, 20E06 (Primary) 46L05, 43A07, 20E08 (Secondary)} 
\keywords{$C^*$-simplicity, free product of groups with amalgamation, HNN-extensions, inner amenability} 
\begin{document}

\begin{abstract} 
We introduce two families of examples of groups acting on trees, one consisting of group amalgamations and the other consisting of HNN-extensions, 
motivated by the problems of $C^*$-simplicity and unique trace property. Moreover, we prove that our examples are not inner amenable 
and identify a relatively large, simple, normal subgroup in each one. 
\end{abstract} 

\maketitle

\section{Introduction} 

The questions of $C^*$-simplicity and unique trace property for a discrete group have been studied extensively. By definition, a discrete group $G$ 
is $C^*$-simple if the $C^*$-algebra associated to the left regular representation, $C_r^*(G)$, is simple, likewise it has the unique trace property if 
$C_r^*(G)$ has a unique tracial state. An extensive introduction to that topic was given by de la Harpe (\cite{Harpe}). Recently, Kalantar and 
Kennedy (\cite{KK}) gave a necessary and sufficient condition for $C^*$-simplicity in terms of action on the Furstenberg boundary. Later, 
Breuillard, Kalantar, Kennedy, and Ozawa (\cite{BKKO}) studied further the question of $C^*$-simplicity and also showed that a group has the unique 
trace property if and only if its amenable radical is trivial. They also showed that $C^*$-simplicity implies the unique trace property. The reverse 
implication was disproven by examples given by Le Boudec (\cite{leboudec17}). 
\par 
The notion of inner amenability for discrete groups was introduced by Effros (\cite{effros}) as an analogue to Property $\Gamma$ for $II_1$ factors, 
introduced by Murray and von Neumann (\cite{MvN}). By definition, a discrete group $G$ is inner amenable if there exist a conjugation invariant, 
positive, finitely additive, probability measure on $G \setminus \{ 1 \}$. Effros showed that Property $\Gamma$ implies inner amenability, but the 
reverse implication doesn't hold, as demonstrated in an example of Vaes (\cite{vaes}).  
\par 
In section 2 we give some preliminaries on group amalgamations, HNN-extensions, groups acting on trees, inner amenability, etc. We also prove an 
auxiliary statement about inner amenability, using equivariant maps (\cite{BH86}), which is interesting in its own right. 
In sections 3 and 4 we give our examples on group amalgamations and HNN-extensions, respectively. 
The structures of these two sections are alike. First, we give some basic properties; then, we study the group-theoretic 
structure; finally, we study the analytic structure: unique trace property, $C^*$-simplicity, and inner amenability. 
\par
Finally, we point out that the examples of section 3 generalize the example given in \cite[Section 4]{IO}, and the examples of section 4 generalize 
the example given in \cite[Section 5]{BIO}.

\section{Technical Details and Preliminaries}

For a group $\Gamma$ acting on a set $X$, we denote the set-wise stabilizer of a subset $Y \subset X$ by
$$\Gamma_{ \{ Y \} } \ \equiv \ \{\ g \in \Gamma \ | \ gY = Y \ \}$$
and the point-wise stabilizer of a subset $Y \subset X$ by
$$\Gamma_{ ( Y ) } \ \equiv \ \{\ g \in \Gamma \ | \ gy = y, \ \forall y \in Y \ \}.$$ 
For a point $x \in X$, we denote its stabilizer by 
$$\Gamma_{ x } = \{\ g \in \Gamma \ | \ gx = x \ \}.$$ 
Note that, $\Gamma_{ \{ Y \} }$, $\Gamma_{(Y)}$, and $\Gamma_x$ are all subgroups of $\Gamma$. Also note that, 
$$ g \Gamma_{ \{ Y \} } g^{-1} = \Gamma_{ \{ gY \} }, \ g \Gamma_x g^{-1} = \Gamma_{gx} 
\ \text{, and} \ g \Gamma_{ ( Y ) } g^{-1} = \Gamma_{ ( gY ) }.$$ 
\par 
For a group $G$ and its subgroup $H$, by $\langle \langle H \rangle \rangle_G$ or by $\langle \langle H \rangle \rangle$ we will denote the 
normal closure of $H$ in $G$. 
\par 
Some general references on group amalgamations and HNN-extensions are, e.g., \cite{baumslag}, \cite{cohen}, \cite{Serre}, \cite{Harpepreaux}, etc. 
\par 
Let $G_i \ = \ \langle X_i \ | \ R_i \rangle$ for $i = 0,1$ be two groups having a common subgroup $H$ embedded via 
$j_i : H \hookrightarrow G_i$. Their free product with amalgamation is the group 
$$G_0 *_H G_1 \ \equiv \ \langle X_0 \sqcup X_1 \ | \ R_0 \sqcup R_1 \sqcup \{ j_0(h) = j_1(h) \ | \ h \in H \} \rangle.$$ 
We will assume that the embeddings $j_i$ are self-evident. Every element $g \ \in \ G_0 *_H G_1 \setminus H$ can be written in reduced form as 
$$g \ = \ (g_0) g_1 \cdots g_n, \text{  where  } n \in \mathbb{N}_0 \text{  and  } g_k \in G_{k \pmod 2} \setminus H.$$ 
If $S_i$ is a set of left coset representatives for $G_i / H$, where $i = 0,1$, satisfying $S_0 \cap S_1 = \{ 1 \}$, then every element 
$g \ \in \ G_0 *_H G_1$ can be uniquely written in normal form as 
$$g \ = \ (s_0) s_1 \cdots s_n \cdot h, \text{  where  } n \in \mathbb{N}_0, \ \ s_k \in S_{k \pmod 2} \setminus \{ 1 \}, \text{  and  } h \in H.$$ 
The group amalgamation $G_0 *_H G_1$ is called nontrivial if $G_0 \not= H \not= G_1$ and is called nondegenerate if, moreover, 
$\Index[G_0:H] \geq 3$ or $\Index[G_1:H] \geq 3$. \\ 
The Bass-Serre tree $T[G_0 *_H G_1]$ of $G_0 *_H G_1$ is the graph, that can be shown to be a tree, consisting of a vertex set 
$$\text{Vertex}(T[G_0 *_H G_1]) \ = \ \{ G_0 \} \cup 
\{ (s_0) s_1 \cdots s_n G_{n+1 \pmod 2} \ | \ n \in \mathbb{N}_0, \ s_k \in S_{k \pmod 2} \setminus \{ 1 \} \}$$ 
and an edge set 
$$\text{Edge}(T[G_0 *_H G_1]) \ = \ \{ (s_0) s_1 \cdots s_n H \ | \ n \in \mathbb{N}_0, \ s_k \in S_{k \pmod 2} \setminus \{ 1 \} \}.$$ 
In this situation, the vertex $(s_0) s_1 \cdots s_n G_{n+1 \pmod 2}$ is adjacent to the vertex $(s_0) s_1 \cdots s_n s_{n+1} G_{n \pmod 2}$ with 
connecting edge 
$$(s_0) s_1 \cdots s_n G_{n+1 \pmod 2} \ \cap \ (s_0) s_1 \cdots s_n s_{n+1} G_{n \pmod 2} \ = \ (s_0) s_1 \cdots s_n s_{n+1} H.$$ 
$G_0 *_H G_1$ acts on $T[G_0 *_H G_1]$ by left multiplication. \\ 
Finally, $G_0 *_H G_1$ has the following universal property (see, e.g., \cite{cohen}, page 29 or \cite{baumslag}, page 128): 
\begin{remark} \label{amalgamuniversal} 
Let $C$ be a group and let $\alpha_i : G_i \longrightarrow C$ be group homomorphisms ($i = 0,1$) for which $\alpha_0 |_H = \alpha_1 |_H$. 
Then, there is a unique group homomorphism $\beta : G_0 *_H G_1 \longrightarrow C$ satisfying $\beta |_{G_i} = \alpha_i$ for every $i =0,1$.
\end{remark}
\par
Let $G \ = \ \{ X \ | \ R \}$ be a group, let $H$ be a subgroup of $G$, and let $\theta : H \hookrightarrow G$ be a monomorphism. Then, an 
HNN-extension of this data (named after G. Higman, B. Neumann, H. Neumann) is the group 
$$HNN(G, H, \theta) \ \equiv \ G *_\theta \ \equiv \ 
\langle X \sqcup \{ \tau \} \ | \ R \sqcup \{ \theta(h) \ = \ \tau^{-1} h \tau \ | \ h \in H \} \rangle.$$ 
It is convenient to denote $H_{-1} \ \equiv \ H$ and $H_1 \ \equiv \ \theta(H)$. 
Every element $\gamma \in \ HNN(G, H, \theta)$ can be written in reduced form as 
\begin{multline*}
\gamma \ = \ g_1\tau^{\eps_1}\cdots g_n\tau^{\eps_n}g_{n+1}, \text{  where  } n \in \mathbb{N},\ g_1, \dots, g_{n+1} \ \in \ G,\ 
\eps_1, \dots, \eps_n \ = \ \pm 1, \\ 
\text{and where if  } \eps_{i+1}=-\eps_i \text{ for } 1\leq i\leq n-1, \text{ then } g_{i+1}\notin H_{\eps_i}. 
\end{multline*}
If $S_\eps$ is a set of left coset representatives for $G / H_\eps$, where $\eps = \pm 1$, satisfying $S_{-1} \ \cap \ S_1 \ = \ \{ 1 \}$, 
then every element $\gamma \in \ HNN(G, H, \theta)$ can be uniquely written in normal form as 
\begin{multline*} 
\gamma \ = \ s_1\tau^{\eps_1}s_2\tau^{\eps_2}\cdots s_n\tau^{\eps_n}g, \text{  where  } n \in \mathbb{N}_0, \ g \in G, \ \eps_i = \pm 1, 
\ s_i \in S_{-\eps_i},\ \forall 1 \leq i \leq n, \\ 
\text{and where if } \eps_{i-1} = -\eps_i \text{ for } 2 \leq i \leq n, \text{ then } s_i  \not= 1. 
\end{multline*}
The HNN-extension $HNN(G, H, \theta)$ is called nondegenerate if either $H \not= G$ or $\theta(H) \not= G$ and is called non-ascending if 
$H \not= G \not= \theta(G)$. \\ 
The Bass-Serre tree $T(HNN(G, H, \theta))$ of $HNN(G, H, \theta)$ is the graph, that can be shown to be a tree, consisting of a vertex set 
\begin{multline*} 
\text{Vertex}(HNN(G, H, \theta)) \ = \\ 
\{ G \} \ \cup \ \{ s_1\tau^{\eps_1}s_2\tau^{\eps_2}\cdots s_n\tau^{\eps_n} G \ | \ 
n \in \mathbb{N}, \ \ s_1\tau^{\eps_1}s_2\tau^{\eps_2}\cdots s_n\tau^{\eps_n} \text{ is in normal form} \}
\end{multline*} 
and edge set 
\begin{multline*} 
\text{Edge}(HNN(G, H, \theta)) \ = \\ 
\{ H \} \ \cup \ \{ s_1\tau^{\eps_1}s_2\tau^{\eps_2}\cdots s_n\tau^{\eps_n} H \ | \ 
n \in \mathbb{N}, \ \ s_1\tau^{\eps_1}s_2\tau^{\eps_2}\cdots s_n\tau^{\eps_n} \text{ is in normal form} \}.
\end{multline*}
In this situation, the vertex $v = s_1 \tau^{\eps_1} s_2 \tau^{\eps_2} \cdots s_n \tau^{\eps_n} G$ is adjacent to the vertex 
$w = s_1 \tau^{\eps_1} s_2\tau^{\eps_2} \cdots s_n \tau^{\eps_n} s_{n+1} \tau^{\eps_{n+1}} G$ with connecting edge 
\begin{equation*} 
e \ = \  
\begin{cases}
s_1\tau^{\eps_1}s_2\tau^{\eps_2}\cdots s_n\tau^{\eps_n} s_{n+1} \tau^{\eps_{n+1}} H \text{   if   } \eps_{n+1} = -1, \\ 
s_1\tau^{\eps_1}s_2\tau^{\eps_2}\cdots s_n\tau^{\eps_n} s_{n+1} H \text{   if   } \eps_{n+1} = 1.
\end{cases}
\end{equation*}
To see the reason for this, we need to look at the stabilizers. The stabilizer of $v$ is 
$$HNN(G, H, \theta)_v \ = \ s_1\tau^{\eps_1}s_2\tau^{\eps_2}\cdots s_n\tau^{\eps_n} G 
(s_1\tau^{\eps_1}s_2\tau^{\eps_2}\cdots s_n\tau^{\eps_n})^{-1},$$ 
and the stabilizer of $w$ is 
$$HNN(G, H, \theta)_w \ = \ s_1\tau^{\eps_1}s_2\tau^{\eps_2}\cdots s_n\tau^{\eps_n} s_{n+1} \tau^{\eps_{n+1}} G 
(s_1\tau^{\eps_1}s_2\tau^{\eps_2}\cdots s_n\tau^{\eps_n} s_{n+1} \tau^{\eps_{n+1}})^{-1}.$$ 
Therefore, the stabilizer of $e$ is 
\begin{multline*}
HNN(G, H, \theta)_e \ = \ HNN(G, H, \theta)_v \ \cap \ HNN(G, H, \theta)_w \ = \\ 
s_1\tau^{\eps_1}s_2\tau^{\eps_2}\cdots s_n\tau^{\eps_n} s_{n+1} \ [G \ \cap \ \tau^{\eps_{n+1}} G \tau^{-\eps_{n+1}}] \ 
(s_1\tau^{\eps_1}s_2\tau^{\eps_2}\cdots s_n\tau^{\eps_n} s_{n+1})^{-1} \ = \\ 
s_1\tau^{\eps_1}s_2\tau^{\eps_2}\cdots s_n\tau^{\eps_n} s_{n+1} \ H_{-\eps_{n+1}} \ 
(s_1\tau^{\eps_1}s_2\tau^{\eps_2}\cdots s_n\tau^{\eps_n} s_{n+1})^{-1} \ = \\ 
\begin{cases}
s_1\tau^{\eps_1}s_2\tau^{\eps_2}\cdots s_n\tau^{\eps_n} s_{n+1} H 
(s_1\tau^{\eps_1}s_2\tau^{\eps_2}\cdots s_n\tau^{\eps_n} s_{n+1})^{-1} \text{   if   } \eps_{n+1} = 1, \\ 
s_1\tau^{\eps_1}s_2\tau^{\eps_2}\cdots s_n\tau^{\eps_n} s_{n+1} \tau^{\eps_{n+1}} H \tau^{-\eps_{n+1}} 
(s_1\tau^{\eps_1}s_2\tau^{\eps_2}\cdots s_n\tau^{\eps_n} s_{n+1})^{-1} \text{   if   } \eps_{n+1} = -1.
\end{cases}
\end{multline*} 
$HNN(G, H, \theta)$ acts on $T(HNN(G, H, \theta))$ by left multiplication. \\ 
Finally, since $HNN(G, H, \theta)$ can be expressed as 
$$HNN(G, H, \theta) \ = \ ( G * \langle \tau \rangle ) / \langle \langle \tau^{-1} h \tau \theta(h^{-1}) \ | \ h \in H \rangle \rangle,$$ 
it has the following universal property (see, e.g., \cite{cohen}, page 36): 
\begin{remark} \label{HNNuniversal}
Let $C$ be a group, let $\alpha : G \longrightarrow C$ be a group homomorphism, and let $t \in C$ be an element for which the following holds: 
$t^{-1} \alpha(h) t = \alpha(\theta(h))$ for each $h \in H$. Then, there is a unique group homomorphism 
$\beta : HNN(G, H, \theta) \longrightarrow C$ satisfying $\beta |_G = \alpha$ and $\beta(\tau) = t$.
\end{remark}
\par
The barycentric subdivision of a graph $T$ is, by definition, the graph $T^{(1)}$ with vertex and edge sets 
$$\text{Vertex}(T^{(1)}) \ = \ \text{Vertex}(T) \sqcup \{\ \{e,\bar{e}\} \ | \ e \in \text{Edge}(T) \}, \ \ \ \ \ \ 
\text{Edge}(T^{(1)}) \ = \  \text{Edge}(T) \times \{ 0,1 \},$$
respectively, where $\overline{(e, \eps)} = (\bar{e}, 1 - \eps)$. \\ 
It is easy to see that each tree $T$ of finite diameter has a unique center, which is a vertex or an edge (this follows essentially from Exercise 3 
on page 21 of \cite{Serre}). Therefore, $T^{(1)}$ has a unique centre, which is a vertex. 
\par 
Let $G$ be a group acting on a tree $T$. Every element of $G$ is either elliptic, i.e. fixes some vertex or some edge of $T$, or is hyperbolic otherwise 
(see, e.g., \cite[I.6]{Serre}). It is clear that there is a canonical way in which $G$ acts on $T^{(1)}$ and on its boundary $\partial T^{(1)}$. 
Note that, if an element of $G$ is elliptic, then it necessary fixes a vertex of $T^{(1)}$. Every hyperbolic element fixes exactly two point the boundary 
$\partial T$ (and also on the boundary $\partial T^{(1)}$), one is attractive and the other one is repulsive. 
By definition, two hyperbolic elements are transverse if they don't have any common fixed points on $\partial T$. 
The action of $G$ on $T$ is said to be of general type if there are two transverse hyperbolic elements.  
\par 
The action of a group $G$ on a set $X$ is called transitive if for every $x, y \in X$, there is an element $g \in G$ satisfying $g x = y$. The action of 
a group $G$ on a topological space $X$ is called minimal if every $G$-orbit is dense in $X$. 
It is obvious that the action of a group amalgamation or an HNN-extension on their corresponding Bass-Serre trees is transitive and, 
therefore, minimal. 
\par 
For an elliptic element $g \in G \setminus \{ 1 \}$, we define a set $\Psi_T(g) \ \equiv \ \partial T^{(1)}_g \cap T^{(1)}_g \ \subset T^{(1)}$. 
Equivalently, $t \in T^{(1)}_g$ belongs to $\Psi_T(g)$ if $t$ has a neighbour that is not fixed by $g$. Clearly, if $g \not= 1$ is elliptic, then 
$\Psi_T(g) \not= \emptyset$. Let $\Upsilon_T(g)$ be the smallest connected subtree of $T^{(1)}$ containing $\Psi_T(g)$. 
We propose the following definitions.
\begin{definition}
Let $G$ act on a tree $T$. We call an elliptic element $g \in G \setminus \{ 1 \}$ \it{finitely fledged} if $\Upsilon_T(g)$ has a finite diameter. 
We call the action of $G$ on $T$ \it{finitely fledged} if every elliptic element  $g \in G \setminus \{ 1 \}$ is finitely fledged. 
\end{definition} 
Note that, in this situation, $\Upsilon_T(g)$ has a unique center in $T^{(2)}$. 
Note also that, if $c \in T^{(2)}$ is the center of $\Upsilon_T(g)$ and if $h \in G$, then $hc$ is the center of $\Upsilon_T(h g h^{-1})$. 
\begin{definition} 
Let $G$ act on a tree $T$. We call an elliptic element $g \in G \setminus \{ 1 \}$ \it{infinitely fledged} if $\Upsilon_T(g)$ has an infinite diameter. 
We call the action of $G$ on $T$ \it{infinitely fledged} if there is an infinitely fledged elliptic element $g \in G \setminus \{ 1 \}$.
\end{definition}
\par 
We will make use of \cite[Proposition 7]{BH86} which, for a group $G$ acting on a tree $T$, states that if the action is of general type and if there is 
an equivariant (with respect to conjugation) map $\delta : G \setminus \{ 1 \} \to T \sqcup \partial T$, then $G$ is not inner amenable. 
Note that, $T \sqcup \partial T$, equipped with the shadow topology, is a compact, totally disconnected topological space 
(see \cite[Section 4.1]{monodshalom}, \cite[Appenix A]{BIO}). 
\par
The following proposition can be regarded as a refinement of \cite[Proposition 0.3]{Stalder}.

\begin{prop} \label{fledged}
Let $G$ be a group acting on a tree $T$, so that the action is of general type. If the action of $G$ on $T$ is finitely fledged, then 
$G$ is not inner amenable. 
\end{prop}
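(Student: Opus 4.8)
The plan is to build a conjugation-equivariant map $\delta : G \setminus \{ 1 \} \to T^{(2)} \sqcup \partial T^{(2)}$ and then to quote \cite[Proposition 7]{BH86}. First I would record two harmless reductions. Barycentric subdivision does not change the boundary, so $\partial T^{(2)} = \partial T^{(1)} = \partial T$ canonically and $G$-equivariantly. Moreover the action of $G$ on $T^{(2)}$ is again of general type: a hyperbolic element of the $T$-action acts on $|T^{(2)}| = |T|$ without a fixed point, hence hyperbolically, with the same pair of fixed ends; so two transverse hyperbolic elements for the $T$-action remain transverse hyperbolic elements for the $T^{(2)}$-action. Hence it will be enough to produce $\delta$ and apply \cite[Proposition 7]{BH86} to the action of $G$ on the tree $T^{(2)}$.

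Next I would define $\delta$ by a case split. If $g \in G \setminus \{ 1 \}$ is hyperbolic (equivalently, hyperbolic for the $T^{(2)}$-action), I set $\delta(g)$ to be the attractive fixed point of $g$ on $\partial T = \partial T^{(2)}$, which is a well-defined single point. If $g \in G \setminus \{ 1 \}$ is elliptic, then, because the action is finitely fledged, $\Upsilon_T(g)$ is a subtree of $T^{(1)}$ of finite diameter, so it has a unique center $c$; as observed just after the definition of finitely fledged elements, $c$ is a vertex of $T^{(2)}$, and I set $\delta(g) = c$. Since every nontrivial element of $G$ is either elliptic or hyperbolic, and an elliptic element of the $T$-action fixes a vertex of $T^{(1)}$ (hence of $T^{(2)}$), this defines $\delta$ on all of $G \setminus \{ 1 \}$ with values in $T^{(2)} \sqcup \partial T^{(2)}$, consistently with the $T^{(2)}$-classification.

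Then I would check that $\delta$ is equivariant with respect to conjugation, that is, $\delta(h g h^{-1}) = h \cdot \delta(g)$ for all $h \in G$ and $g \in G \setminus \{ 1 \}$. Conjugation by $h$ is an automorphism of $T$, hence of $T^{(1)}$, of $T^{(2)}$, and of $\partial T$, and it preserves translation length, so $h g h^{-1}$ is hyperbolic (respectively elliptic) exactly when $g$ is, and the two cases line up. In the hyperbolic case, $h g h^{-1}$ has as its axis the $h$-image of the axis of $g$ and translates toward the $h$-image of the attractive fixed point of $g$, which is therefore the attractive fixed point of $h g h^{-1}$. In the elliptic case, it was already noted that $h c$ is the center of $\Upsilon_T(h g h^{-1})$. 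In either case $\delta(h g h^{-1}) = h \cdot \delta(g)$.

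Finally, with the equivariant $\delta$ in hand, \cite[Proposition 7]{BH86} applied to the general-type action of $G$ on $T^{(2)}$ yields that $G$ is not inner amenable. The one point that needs care — and the place where the hypothesis is really used — is the well-definedness of $\delta$ on elliptic elements: one needs that a finite-diameter subtree of $T^{(1)}$ has a genuine unique center and that this center is a vertex of $T^{(2)}$. This is precisely the content of the remarks surrounding the definition of \emph{finitely fledged}, resting on the standard fact (essentially Exercise~3 on page~21 of \cite{Serre}) that a tree of finite diameter has a unique center, a vertex or an edge. Everything else in the argument is routine bookkeeping about barycentric subdivisions.
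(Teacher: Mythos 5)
Your proof is correct and follows essentially the same route as the paper: both define the equivariant map $\delta : G \setminus \{1\} \to T^{(2)} \sqcup \partial T^{(2)}$ sending a hyperbolic element to its attractive boundary point and an elliptic element to the center of $\Upsilon_T(g)$ (which exists and lies in $T^{(2)}$ precisely by the finitely fledged hypothesis), and then invoke \cite[Proposition 7]{BH86}. Your additional checks of equivariance and of the general-type property passing to $T^{(2)}$ are details the paper leaves implicit but are correctly handled.
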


\begin{proof}

Define $X = T^{(2)} \sqcup \partial T^{(2)}$. As in the proof of \cite[Proposition 0.3]{Stalder}, we define a $G$-equivariant map 
$\delta : G \setminus \{1\} \longrightarrow X$ in the following way: \\ 
$\bullet$ If $g \in G$ is hyperbolic, we define $\delta(g)$ to be the attractive point of $g$ on $\partial T^{(2)}$. \\ 
$\bullet$ If $g \in G \setminus \{1\}$ is elliptic, we define $\delta(g)$ to be the center of $\Upsilon_T(g)$ in $T^{(2)}$. 
\par 
The result follows from \cite[Proposition 7]{BH86}.
\end{proof} 

It was proven in \cite{Stalder} that the Baumslag-Solitar group 
$$BS(2,3) \ = \ \{ \tau , b \ | \ \tau^{-1} b^2 \tau = b^3 \}$$
is inner amenable. We note that its action on its Bass-Serre tree $T$ is infinitely fledged. 
\par
To see this, it is easy to verify that the element $b^6$ fixes the linear subtree 
$$T' \ = \ \{ \dots, \tau^{-1} b \tau b \tau^{-1} \langle b \rangle, \tau^{-1} b \tau \langle b \rangle, \tau^{-1} \langle b \rangle, \langle b \rangle, \tau \langle b \rangle, 
\tau b \tau^{-1} \langle b \rangle, \tau b \tau^{-1} b \tau \langle b \rangle, \tau b \tau^{-1} b \tau b \tau^{-1} \langle b \rangle, \dots \}.$$
For example, 
\begin{multline*} 
b^6 \tau b \tau^{-1} b \tau b \tau^{-1} \langle b \rangle = \tau b^9 \cdot b \tau^{-1} b \tau b \tau^{-1} \langle b \rangle = 
\tau b \tau^{-1} b^6 \cdot b \tau b \tau^{-1} \langle b \rangle = \tau b \tau^{-1} b \tau b^9 \cdot b \tau^{-1} \langle b \rangle = \\ 
= \tau b \tau^{-1} b \tau b \tau^{-1} b^6 \langle b \rangle = \tau b \tau^{-1} b \tau b \tau^{-1} \langle b \rangle.
\end{multline*}
Each vertex of $T'$ that starts and ends with the same power of $\tau$, however, has a neighbour that is not fixed by $b^6$. The neighbour 
$\tau b \cdots \tau^{-1} b \tau \cdot \tau \langle b \rangle$ of the vertex $\tau b \cdots \tau^{-1} b \tau \langle b \rangle$ is not fixed by $b^6$, 
and the neighbour $\tau^{-1} b \cdots \tau b \tau^{-1} \cdot \tau^{-1} \langle b \rangle$ of the vertex 
$\tau^{-1} b \cdots \tau b \tau^{-1} \langle b \rangle$ is not fixed by $b^6$. To see this, observe, for example, that 
\begin{multline*} 
b^6 \tau b \tau^{-1} b \tau b \tau^{-1} b \tau \cdot \tau \langle b \rangle = \tau b^9 \cdot b \tau^{-1} b \tau b \tau^{-1} b \tau \cdot \tau \langle b \rangle = 
\tau b \tau^{-1} b^6 \cdot b \tau b \tau^{-1} b \tau \cdot \tau \langle b \rangle = \\ 
\tau b \tau^{-1} b \tau b^9 \cdot b \tau^{-1} b \tau \cdot \tau \langle b \rangle = \tau b \tau^{-1} b \tau b \tau^{-1} b^6 \cdot b \tau \cdot \tau \langle b \rangle = 
\tau b \tau^{-1} b \tau b \tau^{-1} b \tau b^9 \tau \langle b \rangle = \\ 
\tau b \tau^{-1} b \tau b \tau^{-1} b \tau b \cdot b^8 \tau \langle b \rangle = \tau b \tau^{-1} b \tau b \tau^{-1} b \tau b \tau b^{12} \langle b \rangle = 
\tau b \tau^{-1} b \tau b \tau^{-1} b \tau b \tau \langle b \rangle  \not= \tau b \tau^{-1} b \tau b \tau^{-1} b \tau \cdot \tau \langle b \rangle.
\end{multline*}
Therefore, the action of $BS(2,3)$ on $T$ is infinitely fledged.
\par
The proofs of Proposition \ref{fledged} and \cite[Proposition 7]{BH86} imply 

\begin{cor}
Let $G$ be a group acting on a tree $T$, so that the action is of general type. If $m$ is a conjugation invariant mean on $G \setminus \{ 1 \}$, 
then $m$ is supported on the infinitely fledged elliptic elements of $G \setminus \{ 1 \}$. 
\end{cor}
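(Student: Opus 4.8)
The plan is to prove the equivalent statement that the set
\[
E \;=\; \{\, g \in G \setminus \{1\} \ :\ g \text{ is hyperbolic, or } g \text{ is elliptic and finitely fledged} \,\}
\]
satisfies $m(E) = 0$. Indeed, every non-identity element of $G$ is either hyperbolic or elliptic, and a non-identity elliptic element is by definition either finitely fledged or infinitely fledged, so the complement of $E$ in $G \setminus \{1\}$ is exactly the set of infinitely fledged elliptic elements; thus $m(E)=0$ is precisely the conclusion of the corollary. The first, routine, step is to note that $E$ is conjugation invariant: conjugation preserves the classes of hyperbolic and of elliptic elements, and for elliptic $g \ne 1$ one has $\Upsilon_T(hgh^{-1}) = h \cdot \Upsilon_T(g)$, a subtree that has finite diameter if and only if $\Upsilon_T(g)$ does, since $G$ acts on $T^{(1)}$ by isometries. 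Hence being finitely fledged, and so membership in $E$, is a conjugation-invariant property.

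Suppose, for contradiction, that $m(E) > 0$, and set $m' = m(E)^{-1}\, m|_{E}$, a conjugation-invariant mean concentrated on $E$. Following the proof of Proposition \ref{fledged} (which in turn follows \cite[Proposition 0.3]{Stalder}), put $X = T^{(2)} \sqcup \partial T^{(2)}$ and define $\delta : E \longrightarrow X$ by letting $\delta(g)$ be the attractive point of $g$ in $\partial T^{(2)}$ when $g$ is hyperbolic, and the centre of $\Upsilon_T(g)$ in $T^{(2)}$ when $g$ is elliptic. The second clause makes sense precisely because every elliptic element of $E$ is finitely fledged, so that $\Upsilon_T(g)$ has a well-defined centre in $T^{(2)}$. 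Exactly as in that proof, $\delta$ is $G$-equivariant with respect to conjugation on $E$ and the natural $G$-action on $X$ — using, for the elliptic case, the observation recorded just after the definition of ``finitely fledged'' that $h$ sends the centre of $\Upsilon_T(g)$ to the centre of $\Upsilon_T(hgh^{-1})$. Pushing $m'$ forward along $\delta$ then gives a $G$-invariant mean $\bar m$ on $X$.

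From here the argument reproduces that of \cite[Proposition 7]{BH86}. Since $X$ is compact (and totally disconnected), the restriction of $\bar m$ to $C(X)$ is a $G$-invariant state, and by the Riesz representation theorem it is integration against a $G$-invariant regular Borel probability measure $\mu$ on $X$. But the $G$-action on the tree $T^{(2)}$ is again of general type: barycentric subdivision does not change the space of ends, and two transverse hyperbolic elements of the action on $T$ stay transverse on $T^{(2)}$. A general-type action on a tree admits no invariant probability measure on $T^{(2)} \sqcup \partial T^{(2)}$ — this is exactly the contradiction extracted in the proofs of \cite[Proposition 7]{BH86} and Proposition \ref{fledged}: choosing transverse hyperbolic elements $a$ and $b$, any invariant probability measure is forced to be supported on the two fixed points of $a$ in $\partial T^{(2)}$ and simultaneously on the two fixed points of $b$, and these pairs are disjoint. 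This contradiction gives $m(E) = 0$.

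I expect the work here to be bookkeeping rather than new mathematics. The two points to check are that the partial map $\delta$ of Proposition \ref{fledged} is well defined and conjugation-equivariant on $E$ (this is the only place finite fledgedness is used), and that the transfer from a conjugation-invariant mean on $E$, to a $G$-invariant mean on $X$, to a $G$-invariant Borel probability measure on $X$ is the verbatim argument of \cite[Proposition 7]{BH86}. The single substantive input — that a general-type action on a tree has no invariant probability measure on $T^{(2)} \sqcup \partial T^{(2)}$ — is already the engine of those two cited proofs, so nothing beyond them is required; a minor sanity check is that barycentric subdivision behaves well throughout (that $\partial T^{(2)}$ is canonically $G$-homeomorphic to $\partial T$, that $T^{(2)} \sqcup \partial T^{(2)}$ is compact and totally disconnected, and that general type is inherited), all of which is already implicit in how $T^{(2)}$ is used in Proposition \ref{fledged}.
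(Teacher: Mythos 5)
Your proposal is correct and is exactly the argument the paper intends: the paper offers no written proof, merely asserting that the corollary follows from the proofs of Proposition \ref{fledged} and \cite[Proposition 7]{BH86}, and your unpacking --- restrict the mean to the conjugation-invariant set of hyperbolic and finitely fledged elliptic elements, push it forward along the partial equivariant map $\delta$ (well defined there), and derive the usual contradiction with general type --- is the localization of those two proofs that the authors have in mind. Nothing is missing.
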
 

To conclude this section, we recall that we called a group amenablish if it has no nontrivial $C^*$-simple quotients (\cite[Definition 7.1]{IO}). 
We showed in \cite{IO} that the class on amenablish groups is a radical class, so every group has a unique maximal normal amenablish subgroup, 
the amenablish radical. Also, since the class of amenablish is a radical class, it is closed under extensions. 

\section{Group Amalgamations} 

\subsection{Notations, Definitions, Quasi-Kernels}  

We introduce notations, some of which appear in \cite{IO}:
\par
Let $G = G_0 *_H G_1$ be a nontrivial amalgam. We define sets $T_{j,n} \subset G$ as follows:\\ 
Let $T_{0,0}=T_{1,0}=H$. For $j=0,1$ and $n\geq 1$, let
\[
T_{j,n}=\{g_j\dotsm g_{j+n-1} \ | \  g_i\in G_{i\pmod 2}\setminus H\}.
\]
Now, for $j=0,1$ and $n\geq 0$, let 
\begin{equation}\label{eq:C-jn}
C_{j,n}=\bigcap_{g\in T_{j,n}}gHg^{-1}.
\end{equation}
Next, we consider the quasi-kernels defined in \cite{IO}: 
\begin{equation}\label{eq:K0K1}
K_0=\bigcap_{n\geq 0}C_{0,n}
\quad\text{and}\quad
K_1=\bigcap_{n\geq 0}C_{1,n}. 
\end{equation}
Note that, 
\begin{equation}\label{eq:kerG}
\ker G=K_0 \cap K_1,
\end{equation}
where the kernel of $G$ is defined as 
\[
\ker G = \bigcap_{g\in G}gHg^{-1}.
\]
Also, for $j=0,1,$ $n \geq 1$, and $g_i\in G_{i\pmod 2}\setminus H$, let 
$$K(g_{j+n} \dotsm g_j) = g_{j+n} \dotsm g_j K_j g_j^{-1}\dotsm g_{j+n}^{-1}  \text{    and}$$
$$\bar{K}(g_{j+n} \dotsm g_j) = g_{j+n} \dotsm g_j K_{j-1\pmod2} g_j^{-1}\dotsm g_{j+n}^{-1}.$$
\par 
It follows from \cite[Proposition 3.1]{IO} that $G$ has the unique trace property if and only if $\ker G$ has the unique trace property. 
It also follows from \cite[Theorem 3.9]{BIO} and from Proposition \ref{KjTj} (i) (below) that $G$ is $C^*$-simple if and only if $K_0$ or $K_1$ is trivial or 
non-amenable provided $G$ is a nondegenerate amalgam and $\ker G$ is trivial. 
\par
We need the following results.
 
\begin{lem}\label{K0K1}
$K_0$ and $K_1$ are normal subgroups of $H$. If $\ker G$ is trivial, then $K_0$ and $K_1$ have a trivial intersection and mutually commute.
\end{lem}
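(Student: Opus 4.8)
The plan is to prove the two assertions in turn: first that $K_0$ and $K_1$ are normal subgroups of $H$, and then that the remaining conclusions are formal consequences of this together with the identity $K_0\cap K_1=\ker G$ recorded in \eqref{eq:kerG}.

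For the first assertion, observe that each $K_j$ is a subgroup of $G$, being an intersection of conjugates $gHg^{-1}$ of the subgroup $H$ (including the term $C_{j,0}=H$), and that $K_j\subseteq C_{j,0}=H$. To see that $K_j$ is normalized by $H$, the key observation is that left translation by $h\in H$ preserves each indexing set $T_{j,n}$: if $g=g_jg_{j+1}\cdots g_{j+n-1}\in T_{j,n}$, then $hg=(hg_j)g_{j+1}\cdots g_{j+n-1}$, and $hg_j\in G_{j\pmod 2}\setminus H$ since $H$ is a subgroup of $G_{j\pmod 2}$ and $g_j\notin H$; hence $hg\in T_{j,n}$, so $hT_{j,n}=T_{j,n}$. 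Since conjugation by $h$ commutes with intersections and $h\bigl(gHg^{-1}\bigr)h^{-1}=(hg)H(hg)^{-1}$, we get $h\,C_{j,n}\,h^{-1}=\bigcap_{g\in T_{j,n}}(hg)H(hg)^{-1}=C_{j,n}$ for every $n\geq 1$, while $h\,C_{j,0}\,h^{-1}=hHh^{-1}=H=C_{j,0}$. Intersecting over $n\geq 0$ gives $hK_jh^{-1}=K_j$, so $K_j\trianglelefteq H$. This is the only step requiring any computation, and it is routine; the only care needed is the index bookkeeping in the definition of $T_{j,n}$ (that the first factor $hg_j$ stays outside $H$ in the correct free factor).

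Granting this, the rest is purely formal. If $\ker G$ is trivial, then $K_0\cap K_1=\ker G=\{1\}$ immediately by \eqref{eq:kerG}. For the commuting statement, take $x\in K_0$ and $y\in K_1$ and consider $[x,y]=xyx^{-1}y^{-1}$: reading it as $(xyx^{-1})y^{-1}$ and using $x\in H$ together with $K_1\trianglelefteq H$ shows $[x,y]\in K_1$; reading it as $x(yx^{-1}y^{-1})$ and using $y\in H$ together with $K_0\trianglelefteq H$ shows $[x,y]\in K_0$. Hence $[x,y]\in K_0\cap K_1=\{1\}$, so $xy=yx$. I do not anticipate any genuine obstacle: everything reduces to the transparent stability $hT_{j,n}=T_{j,n}$ for $h\in H$ and to the already-recorded identity $K_0\cap K_1=\ker G$.
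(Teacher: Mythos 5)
Your proof is correct and follows essentially the same route as the paper: normality of $K_j$ in $H$ is deduced from the stability $hT_{j,n}=T_{j,n}$ for $h\in H$, and the commutation is obtained by the standard trick of reading the commutator $k_1k_0k_1^{-1}k_0^{-1}$ in two ways to place it in $K_0\cap K_1=\ker G=\{1\}$. You merely spell out the intermediate steps (such as $hC_{j,n}h^{-1}=C_{j,n}$) in more detail than the paper does.
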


\begin{proof}
First statement follows from the observation that for each $h \in H$ and each $j=0,1$,
\[
h \cdot T_{j,n}=\{h \cdot g_j\dotsm g_{j+n-1} \ | \ 	 g_i\in G_{i\pmod 2}\setminus H\} = T_{j,n}.
\]
For the second statement, ($\ref{eq:kerG}$) implies $K_0 \cap K_1 = \ker G = \{1\}$. Take $k_j \in K_j$, $j = 0,1$. Since $K_j \triangleleft H$ for each $j=0,1$, it follows $k_0 k_1^{-1} k_0^{-1} \in K_1$ and $k_1 k_0 k_1^{-1} \in K_0$. Therefore, 
\[ 
K_0 \ni (k_1 k_0 k_1^{-1})k_0^{-1} = k_1(k_0 k_1^{-1} k_0^{-1}) \in K_1,
\]
so $k_1 k_0 k_1^{-1}k_0^{-1} \in K_0 \cap K_1 = \{1\}$.
\end{proof}

\begin{lem}\label{K(...)}
Let $j \in \{0,1\}$ and $g_i\in G_{i\pmod 2}\setminus H$. Then $K(g_{j+n} \dotsm g_j)$ is a proper subgroup of $K_{j+n+1\pmod2}$.
\end{lem}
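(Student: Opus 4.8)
The plan is to get the (non-strict) inclusion $K(g_{j+n}\dotsm g_j)\subseteq K_{j+n+1\pmod 2}$ for free from a recursion, and then to upgrade it to a strict inclusion by reducing to the one-step case and arguing on the Bass--Serre tree.

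First I would record the recursion
\[
K_i \;=\; H\cap\bigcap_{g\in G_i\setminus H} g\,K_{i+1\pmod 2}\,g^{-1}\qquad(i=0,1),
\]
which comes straight from the definition of the $C_{i,n}$: for $n\geq 1$ every word in $T_{i,n}$ is uniquely $g\cdot w$ with $g\in G_i\setminus H$ and $w\in T_{i+1\pmod 2,\,n-1}$, so $C_{i,n}=\bigcap_{g\in G_i\setminus H} g\,C_{i+1\pmod 2,\,n-1}\,g^{-1}$; intersecting over $n$ and using $C_{i,0}=H$ gives the displayed identity. In particular $g_iK_ig_i^{-1}\subseteq K_{i+1\pmod 2}$ for every $g_i\in G_i\setminus H$, and conjugating $K_j$ successively by $g_j,g_{j+1},\dots,g_{j+n}$ yields the inclusion. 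For strictness, suppose $K(g_{j+n}\dotsm g_j)=K_{j+n+1\pmod 2}$. Peeling off the leftmost letter, $K(g_{j+n}\dotsm g_j)=g_{j+n}\,K(g_{j+n-1}\dotsm g_j)\,g_{j+n}^{-1}$ (with the bracket read as $g_jK_jg_j^{-1}$ when $n=1$), and since $K(g_{j+n-1}\dotsm g_j)\subseteq K_{j+n\pmod 2}$ while $g_{j+n}K_{j+n\pmod 2}g_{j+n}^{-1}\subseteq K_{j+n+1\pmod 2}$, the middle inclusion is forced to be an equality; descending, $g_jK_jg_j^{-1}=K_{j+1\pmod 2}$. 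So it suffices to rule out $gK_ig^{-1}=K_{i+1\pmod 2}$ for $i\in\{0,1\}$ and $g\in G_i\setminus H$. (Note $K_0,K_1\neq\{1\}$: if $\ker G=\{1\}$ the recursion forces $K_0=\{1\}\Leftrightarrow K_1=\{1\}$, and in the setting of nontrivial quasi-kernels both are nontrivial.)

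For the one-step case I would pass to the barycentric subdivision $T^{(1)}$ and let $\bar A_i$ be the closed half-tree cut off at (the midpoint of) the base edge $e_0=H$ on the side of the vertex $G_i$, together with the closed edge $e_0$; from the recursion, $K_i$ is exactly the pointwise stabilizer of $\bar A_i$, so $K_0\cap K_1$ is the pointwise stabilizer of $T^{(1)}$, namely $\ker G$, in agreement with $(\ref{eq:kerG})$. Assuming $g_iK_ig_i^{-1}=K_{i+1\pmod 2}$ and substituting this into the recursion for $K_i$ gives $K_i=H\cap\bigcap_{a\in G_i\setminus H}(ag_i)K_i(ag_i)^{-1}$, so $K_i$ fixes $\bigcup_{a\in G_i\setminus H}(ag_i)\bar A_i$ pointwise. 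Since $ag_i\in G_i$ fixes the vertex $G_i$, each $(ag_i)\bar A_i$ is the complement in $T^{(1)}$ of the open branch hanging beyond the neighbour $(ag_i)G_{i+1\pmod 2}$ of $G_i$, and as $a$ ranges over $G_i\setminus H$ these neighbours exhaust all neighbours of $G_i$ (or all but $g_iG_{i+1\pmod 2}$, according as $g_i$ does or does not normalize $H$). As soon as two distinct such neighbours occur the branches are disjoint, the union is all of $T^{(1)}$, and therefore $K_i\subseteq\ker G=\{1\}$ --- contradicting $K_i\neq\{1\}$.

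The step I expect to be the real obstacle is precisely the degenerate possibility $\Index[G_i:H]=2$: then $H\trianglelefteq G_i$, $g_i$ automatically normalizes $H$, only the neighbour $G_{i+1\pmod 2}$ survives in the union, and in fact $C_{i,1}=H$, so the argument above collapses. Here one has the extra ``swap'' $g_iK_{i+1\pmod 2}g_i^{-1}=K_i$ (using $g_i^{2}\in H$ and $K_i\trianglelefteq H$), and one must re-run the argument at the vertex $G_{i+1\pmod 2}$, where necessarily $\Index[G_{i+1\pmod 2}:H]\geq 3$ --- both indices cannot be $2$, since then $H\trianglelefteq G$ and $\ker G=H$, which is either nontrivial (against $\ker G=\{1\}$) or forces $H=\{1\}$ and hence $K_0=K_1=\{1\}$. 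Carrying out this two-vertex bookkeeping, and making explicit which standing hypotheses ($\ker G=\{1\}$, nontriviality of the quasi-kernels, perhaps nondegeneracy of the amalgam) are in force, is where I would need to be most careful.
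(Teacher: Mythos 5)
Your containment argument is, in substance, the paper's own. The paper proves by induction that $g_{j+n}\dotsm g_j\cdot T_{j,m}\supset T_{j+n+1\pmod2,\,m-n-1}$ for $m\geq n+2$ and then reads off $K(g_{j+n}\dotsm g_j)\leq K_{j+n+1\pmod2}$ from (\ref{eq:C-jn}) and (\ref{eq:K0K1}); your recursion $K_i=H\cap\bigcap_{g\in G_i\setminus H}gK_{i+1\pmod 2}g^{-1}$ is exactly that decomposition of $T_{i,n}$ packaged as an identity of subgroups, and the iteration is the same. That half of your proposal is correct and matches the paper. Everything after it is aimed at strictness, and there you part company with the paper: the paper's proof establishes only the containment and stops, and in every later use of this lemma (Lemma \ref{K(.)K(.')}, Lemma \ref{action}\,(iv), the discussion preceding (\ref{calK'})) only the containment is invoked, always in combination with Lemma \ref{K0K1}.

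More importantly, strict containment is false as literally stated, and the case you flag as ``the real obstacle'' is not a gap to be closed but a counterexample. If $K_0=K_1=\{1\}$ the two sides coincide trivially. In the case $\Index[G_i:H]=2$, your own ``swap'' identity finishes the matter against you: from $T_{i,n+1}=g_iT_{i+1\pmod2,n}$ one gets $K_i=g_iK_{i+1\pmod2}g_i^{-1}$, hence $K(g_i)=g_iK_ig_i^{-1}=g_i^{2}K_{i+1\pmod2}g_i^{-2}=K_{i+1\pmod2}$ exactly, because $g_i^{2}\in H$ and $K_{i+1\pmod2}\trianglelefteq H$ by Lemma \ref{K0K1}. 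This already occurs, with nontrivial quasi-kernels, in the paper's own examples when $\#(I_0)=2$, where (\ref{recursive}) gives $Q_1\cong Q_0$ and the single $K(s_0)$ is all of $K_1$. So ``proper subgroup'' in the statement must be read as just ``subgroup'' (or the statement needs extra hypotheses it does not carry). Your half-tree argument for the case where at least two distinct neighbours of $G_i$ arise is fine as far as it goes, but note that it also imports hypotheses ($\ker G=\{1\}$ and $K_i\neq\{1\}$) that are not part of the lemma; none of this machinery is needed for what the lemma is actually used for.
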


\begin{proof}
For $m \geq 2$, observe that 
\begin{equation*}
\begin{aligned}
g_j \cdot T_{j,m} \ &= \ \{g_j \cdot \gamma_j \dotsm \gamma_{j+m-1} \ | \ \gamma_i\in G_{i\pmod 2}\setminus H\} \\ 
                    &\supset\  \{g_j \cdot (g_j)^{-1} \gamma_{j+1} \dotsm \gamma_{j+m-1} \ | \ \gamma_i\in G_{i\pmod 2}\setminus H\} \\ 
                    &= \ T_{j+1\pmod2,m-1}.
\end{aligned}
\end{equation*}
It follows by induction on $n \geq 0$ that for $m \geq n+2$, one has 
\begin{equation*}
\begin{aligned}
g_{j+n} \dotsm g_j \cdot T_{j,m} \ &= \ \{g_{j+n} \dotsm g_j \cdot \gamma_j \dotsm \gamma_{j+m-1} \ | \ \gamma_i\in G_{i\pmod 2}\setminus H\} \\ 
                    &\supset\  
\{g_{j+n} \dotsm g_j \cdot (g_j^{-1} \dotsm g_{j+n}^{-1}) \gamma_{j+n+1} \dotsm \gamma_{j+m-1} \ | \ \gamma_i\in G_{i\pmod 2}\setminus H\} \\ 
                    &= \ T_{j+n+1\pmod2,m-n-1}.
\end{aligned}
\end{equation*}
The assertion follows from equations (\ref{eq:C-jn}) and (\ref{eq:K0K1}). 
\end{proof}

\begin{lem}\label{K(.)K(.')}
Let $j \in \{ 0,1 \}$ and $g_i, g_i' \in G_{i\pmod 2}\setminus H$. Then the following hold: \\
(i) If $(g'_j)^{-1} \cdots (g'_{j+n})^{-1} g_{j+n} \cdots g_j \in H$, then $K(g_{j+n} \cdots g_j) = K(g'_{j+n} \cdots g'_j)$. \\ 
(ii) If $\ker G$ is trivial and if $(g'_j)^{-1} \cdots (g'_{j+n})^{-1} g_{j+n} \cdots g_j \notin H$, then $K(g_{j+n} \cdots g_j)$ and $K(g'_{j+n} \cdots g'_j)$ have a trivial intersection and mutually commute. 
\end{lem}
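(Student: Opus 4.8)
The plan is to prove (i) by direct substitution and (ii) by induction on $n$, with the base case $n=0$ coming straight from Lemmas \ref{K0K1} and \ref{K(...)} and the inductive step amounting to peeling off the leading syllable of the two products. For (i), put $h = (g'_j)^{-1}\cdots(g'_{j+n})^{-1}g_{j+n}\cdots g_j \in H$, so that $g_{j+n}\cdots g_j = (g'_{j+n}\cdots g'_j)\,h$; then $K(g_{j+n}\cdots g_j) = (g'_{j+n}\cdots g'_j)\,hK_jh^{-1}\,(g'_{j+n}\cdots g'_j)^{-1}$, and since $K_j \triangleleft H$ by Lemma \ref{K0K1} and $h\in H$, one has $hK_jh^{-1}=K_j$, whence $K(g_{j+n}\cdots g_j)=K(g'_{j+n}\cdots g'_j)$.

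For the base case $n=0$ of (ii): conjugating $K(g_j)=g_jK_jg_j^{-1}$ and $K(g'_j)=g'_jK_j(g'_j)^{-1}$ by $g_j^{-1}$ reduces the claim to showing that $K_j$ and $gK_jg^{-1}$ commute and intersect trivially, where $g=g_j^{-1}g'_j\in G_{j\pmod 2}$ and $g\notin H$ (this last being exactly the hypothesis $(g'_j)^{-1}g_j\notin H$). Now $gK_jg^{-1}=K(g)$, which by Lemma \ref{K(...)} (with length parameter $0$) is a proper subgroup of $K_{j+1\pmod 2}$; hence $K_j\cap gK_jg^{-1}\subseteq K_0\cap K_1=\ker G=\{1\}$ by (\ref{eq:kerG}), and $K_j$ commutes with $gK_jg^{-1}\subseteq K_{j+1\pmod 2}$ again by Lemma \ref{K0K1}. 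Conjugating back gives the claim for $K(g_j)$ and $K(g'_j)$.

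For the inductive step, let $n\geq 1$ and write $A=K(g_{j+n}\cdots g_j)$, $B=K(g'_{j+n}\cdots g'_j)$. If $g_{j+n}$ and $g'_{j+n}$ lie in distinct cosets of $H$ in $G_{j+n\pmod 2}$, then $A=g_{j+n}K(g_{j+n-1}\cdots g_j)g_{j+n}^{-1}\subseteq g_{j+n}K_{j+n\pmod 2}g_{j+n}^{-1}=K(g_{j+n})$ by Lemma \ref{K(...)}, and similarly $B\subseteq K(g'_{j+n})$; since $g_{j+n},g'_{j+n}\in G_{j+n\pmod 2}\setminus H$ represent distinct cosets, the already-proved case $n=0$ (with $j$ replaced by $j+n\pmod 2$) shows that $K(g_{j+n})$ and $K(g'_{j+n})$, hence $A$ and $B$, commute and intersect trivially. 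Otherwise $g_{j+n}=g'_{j+n}h$ with $h\in H$; setting $\tilde g_{j+n-1}=hg_{j+n-1}\in G_{j+n-1\pmod 2}\setminus H$, conjugation by $(g'_{j+n})^{-1}$ carries $A$ to $K(\tilde g_{j+n-1}g_{j+n-2}\cdots g_j)$ and $B$ to $K(g'_{j+n-1}\cdots g'_j)$, and one checks that $(g'_j)^{-1}\cdots(g'_{j+n-1})^{-1}\tilde g_{j+n-1}g_{j+n-2}\cdots g_j$ equals $(g'_j)^{-1}\cdots(g'_{j+n})^{-1}g_{j+n}\cdots g_j\notin H$, so the inductive hypothesis at level $n-1$ applies; conjugating back completes the step.

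I expect the main obstacle to be purely organizational: checking that the hypothesis ``$\notin H$'' survives peeling off the leading syllable, and that the $n=0$ case is genuinely unconditional so that it may be invoked with a shifted base index. Nothing deeper is needed, all the structural content being carried by Lemmas \ref{K0K1} and \ref{K(...)}. One can also read (ii) geometrically: $K_j$ is the pointwise stabilizer of the half-tree of $T[G_0*_HG_1]$ on the $G_j$-side of the edge $H$, so $K(g_{j+n}\cdots g_j)$ is the pointwise stabilizer of a translate of that half-tree, and the hypothesis $(g'_j)^{-1}\cdots(g'_{j+n})^{-1}g_{j+n}\cdots g_j\notin H$ forces the two translated half-trees to cover all of $T[G_0*_HG_1]$; then the two stabilizers have disjoint supports, so they commute, and their intersection fixes the whole tree, hence lies in $\ker G=\{1\}$.
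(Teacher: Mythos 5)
Your argument is correct. Part (i) is identical to the paper's: both write one product as the other times an element of $H$ and invoke the normality of $K_j$ in $H$ from Lemma \ref{K0K1}. For part (ii) the paper argues in one step: it conjugates both groups by $(g'_{j+n}\cdots g'_j)^{-1}$, sending $K(g'_{j+n}\cdots g'_j)$ to $K_j$ and $K(g_{j+n}\cdots g_j)$ to $\gamma K_j\gamma^{-1}$ with $\gamma=(g'_j)^{-1}\cdots(g'_{j+n})^{-1}g_{j+n}\cdots g_j$, observes that $\gamma\notin H$ forces the reduced form of $\gamma$ to begin and end with syllables from $G_j\setminus H$, and then applies Lemma \ref{K(...)} to get $\gamma K_j\gamma^{-1}=K(\gamma)<K_{j+1\pmod 2}$, after which Lemma \ref{K0K1} finishes. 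You instead induct on $n$, peeling off the leading syllable: your base case is exactly the paper's argument specialized to $n=0$, and your two-case inductive step (distinct cosets of $H$, handled by pushing both groups into $K(g_{j+n})$ and $K(g'_{j+n})$ via Lemma \ref{K(...)}; same coset, handled by absorbing the $H$-factor into the next syllable and checking the hypothesis survives) makes explicit the cancellation bookkeeping that the paper compresses into the one-line remark about the shape of the reduced word. The two routes rest on the same two lemmas; the paper's is shorter, yours trades length for not having to reason about the general reduced form of $\gamma$. Your closing geometric reading (half-tree stabilizers with disjoint supports covering $T$) is also sound and matches Proposition \ref{KjTj}, though the paper does not use it here.
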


\begin{proof}
Denote $\gamma = (g'_j)^{-1} \cdots (g'_{j+n})^{-1} g_{j+n} \cdots g_j$. \\ 
(i) If $\gamma \in H$, then
\begin{multline*}
(g'_j)^{-1} \cdots (g'_{j+n})^{-1} K(g_{j+n} \cdots g_j) g'_{j+n} \cdots g'_j \\  
= (g'_j)^{-1} \cdots (g'_{j+n})^{-1} g_{j+n} \cdots g_j K_j g_j^{-1} \cdots g_{j+n}^{-1} g'_{j+n} \cdots g'_j \\ 
= \gamma K_j \gamma^{-1} = K_j,
\end{multline*}
where the last equality follows from Lemma \ref{K0K1}. \\
(ii) Note that, if $\gamma \notin H,$ then $\gamma$ starts and ends with elements of $G_j \backslash H$ 
(or itself is an element of $G_j \backslash H$). Therefore, Lemma \ref{K(...)} implies 
\begin{multline*}
(g'_j)^{-1} \cdots (g'_{j+n})^{-1} K(g_{j+n} \cdots g_j) g'_{j+n} \cdots g'_j \\ 
= (g'_j)^{-1} \cdots (g'_{j+n})^{-1} g_{j+n} \cdots g_j K_j g_j^{-1} \cdots g_{j+n}^{-1} g'_{j+n} \cdots g'_j \\ 
= \gamma K_j \gamma^{-1} = K(\gamma) < K_{j+1 \pmod2}.
\end{multline*}
This, combined with 
$$ (g'_j)^{-1} \cdots (g'_{j+n})^{-1} K(g'_{j+n} \cdots g'_j) g'_{j+n} \cdots g'_j = K_j$$
and Lemma \ref{K0K1}, yield the last statement. 
\end{proof}

\begin{remark}\label{Bass-Serre}
Consider the Bass-Serre tree $T = T[G]$ of the group $G = G_0 *_H G_1$, and consider the edge $H$ with ends $G_0$ and $G_1$. 
For each $j=0,1$, denote by $T_j$ the full subtree of $T$ consisting of all vertices $v \in T$ for which $\dist(v, G_j) < \dist(v, G_{j+1 \pmod2})$, and 
denote by $\bar{T}_j$ the full subtree of $T$ consisting of all vertices $v \in T$ for which $\dist(v, G_j) > \dist(v, G_{j+1 \pmod2})$. Note that, 
$T_j = \bar{T}_{j+1 \pmod2}$. Note also that, if $S_i$ are coset representatives for $G_i / H$, then 
\begin{equation} \label{Tj}
T_j = \{ G_j \} \cup \{ s_j \cdots s_{j+n} G_{j+n+1 \pmod2} \ | \ n \geq 0,\ s_k \in S_{k \pmod2} \setminus \{ 1 \} \}.
\end{equation}
\end{remark}

\begin{prop} \label{KjTj}
With the notation of the previous Remark, the following hold: \\ 
(i)  $K_j = G_{(T_j)}$. \\ 
(ii) $K(g_{j+n} \dotsm g_j) = G_{(g_{j+n} \dotsm g_j T)}$ and $\bar{K}(g_{j+n} \dotsm g_j) = G_{(g_{j+n} \dotsm g_j \bar{T})}$.
\end{prop}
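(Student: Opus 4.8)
The plan is to deduce part~(ii) from part~(i) by a one-line conjugation argument, and to prove part~(i) by matching the intersection defining $K_j$ directly against the pointwise stabilizer of the subtree $T_j$ of Remark~\ref{Bass-Serre}. Only two facts are needed for the bookkeeping: for the action on the Bass-Serre tree the stabilizer of a vertex $gG_i$ is $gG_ig^{-1}$ and the stabilizer of an edge $gH$ is $gHg^{-1}$; and $H\subseteq G_0\cap G_1$. From \eqref{Tj} one reads off that the vertices of $T_j$ are $G_j$ together with the cosets $gG_{j+n\pmod 2}$ for $g=g_j\cdots g_{j+n-1}\in T_{j,n}$, $n\ge 1$ (this is \eqref{Tj} rewritten with arbitrary factors in place of coset representatives), and that for such $g$ the coset $gH$ is an \emph{edge} of $T_j$: it joins $gG_{j+n\pmod 2}$ to the $T_j$-vertex obtained by deleting the last factor of $g$ (for $n=1$, the edge $g_jH$ joins $G_j$ to $g_jG_{j+1\pmod 2}$).

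For the inclusion $K_j\subseteq G_{(T_j)}$ in (i): by Lemma~\ref{K0K1} we have $K_j\triangleleft H$, so $K_j\subseteq H\subseteq G_j$ and $K_j$ fixes $G_j$; and for $g\in T_{j,n}$ equations \eqref{eq:C-jn}--\eqref{eq:K0K1} give $K_j\subseteq C_{j,n}\subseteq gHg^{-1}\subseteq gG_{j+n\pmod 2}g^{-1}$, so $K_j$ fixes every remaining vertex of $T_j$, hence all of $T_j$. For the reverse inclusion, let $x\in G_{(T_j)}$. I would first show $x\in H$: since $x$ fixes $G_j$ we have $x\in G_j$; if $x\notin H$, then $xG_{j+1\pmod 2}$ is one of the distance-one vertices of $T_j$, so $x$ fixes it, which forces $x\in G_{j+1\pmod 2}$ and hence $x\in G_0\cap G_1=H$, a contradiction. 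Thus $x\in H=C_{j,0}$; and for every $n\ge 1$ and every $g\in T_{j,n}$, the element $x$ fixes the edge $gH$ of $T_j$, i.e.\ $x\in gHg^{-1}$, so $x\in C_{j,n}$. Intersecting over $n\ge 0$ gives $x\in K_j$, completing~(i).

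Part~(ii) then follows from~(i) and the identity $gG_{(Y)}g^{-1}=G_{(gY)}$ from Section~2: one has $K(g_{j+n}\cdots g_j)=g_{j+n}\cdots g_j\,G_{(T_j)}\,g_j^{-1}\cdots g_{j+n}^{-1}=G_{(g_{j+n}\cdots g_j T_j)}$, and likewise $\bar K(g_{j+n}\cdots g_j)=g_{j+n}\cdots g_j\,G_{(\bar T_j)}\,g_j^{-1}\cdots g_{j+n}^{-1}=G_{(g_{j+n}\cdots g_j \bar T_j)}$, using $K_{j-1\pmod 2}=K_{j+1\pmod 2}=G_{(T_{j+1\pmod 2})}=G_{(\bar T_j)}$ since $\bar T_j=T_{j+1\pmod 2}$ by Remark~\ref{Bass-Serre}.

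The step I expect to be the main obstacle is the claim $x\in H$ inside the reverse inclusion of~(i): the edge $H$ joining $G_0$ and $G_1$ is \emph{not} contained in $T_j$, so membership of $x$ in $H$ cannot simply be read off from a fixed edge of $T_j$, and the short detour through the distance-one vertex $xG_{j+1\pmod 2}$ is what supplies it. Everything else is routine once the vertices and edges of $T_j$ and their stabilizers have been written out explicitly.
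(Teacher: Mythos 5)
Your argument is correct and follows essentially the same route as the paper: part (i) is the same identification of the intersection $\bigcap_n C_{j,n}$ with the stabilizer of $T_j$ (the paper runs it as a single chain of equivalences ending in ``$h$ fixes every edge of $T_j$'', while you split it into two inclusions and supply the small extra step showing $G_{(T_j)}\subseteq H$, which the paper absorbs into the $n=0$ case), and part (ii) is the same conjugation of (i) via $gG_{(Y)}g^{-1}=G_{(gY)}$. Your reading of the $T$, $\bar T$ in the statement as $T_j$, $\bar T_j$ matches the paper's own proof.
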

\begin{proof}
(i) We have
\begin{equation*} 
\begin{aligned}
h \in K_j \ \ \ & \Longleftrightarrow \\ 
g_{j+n-1}^{-1} \cdots g_j^{-1} h g_j \cdots g_{j+n-1} \in H, \ \ \forall n \geq 0, \ \ \forall g_k \in G_{k \pmod2} \setminus H \ \ \ & \Longleftrightarrow  \\ 
h g_j \cdots g_{j+n-1} \in g_j \cdots g_{j+n-1} H, \ \ \forall n \geq 0, \ \ \forall g_k \in G_{k \pmod2} \setminus H \ \ \ & \Longleftrightarrow  \\ 
h g_j \cdots g_{j+n-1} H = g_j \cdots g_{j+n-1} H, \ \ \forall n \geq 0, \ \ \forall g_k \in G_{k \pmod2} \setminus H \ \ \ & \Longleftrightarrow  \\ 
h \ \text{fixes every edge of} \ T_j \ \ \ & \Longleftrightarrow  \\ 
h \in G_{(T_j)}. & 
\end{aligned}
\end{equation*} 
(ii) As in (i), we have
\begin{equation*} 
\begin{aligned}
h \in K(g_{j+n} \dotsm g_j) \ \ \ & \Longleftrightarrow \\ 
h \in g_{j+n} \dotsm g_j K_j g_j^{-1}\dotsm g_{j+n}^{-1} \ \ \ & \Longleftrightarrow \\ 
g_j^{-1}\dotsm g_{j+n}^{-1} h g_{j+n} \dotsm g_j \in K_j \ \ \ & \Longleftrightarrow \\  
g_j^{-1}\dotsm g_{j+n}^{-1} h g_{j+n} \dotsm g_j \in G_{(T_j)} \ \ \ & \Longleftrightarrow \\  
h  \in g_{j+n} \dotsm g_j G_{(T_j)} g_j^{-1}\dotsm g_{j+n}^{-1} \ \ \ & \Longleftrightarrow \\ 
h  \in  G_{(g_{j+n} \dotsm g_j T_j)}. & 
\end{aligned}
\end{equation*}
$\bar{K}(g_{j+n} \dotsm g_j) = G_{(g_{j+n} \dotsm g_j \bar{T})}$ follows in a similar way.
\end{proof}
\par
Now, chose representatives $S_j$ of $G_j/H$ containing $\{ 1 \}$ for $j=0,1$, and denote $S'_j = S_j \setminus \{ 1 \}$. 
\par
Assume that $\ker G = \{ 1 \}$. 
\par 
Then, from Lemmas \ref{K0K1} and \ref{K(...)}, it follows that $K(s_j) \cap K_j = \{ 1 \}$ and that 
$K(s_j)$ and $K_j$ mutually commute for $s_j \in S'_j$ and for $j=0,1$. Also, from Lemma \ref{K(.)K(.')}, it follows that if $s_j, t_j \in S'_j$ 
are different, then $K(s_j)$ and $K(t_j)$ have a trivial intersection and mutually commute. Likewise, it follows from Lemma \ref{K(.)K(.')} that for 
$n \geq 1$ and for $s_i, t_i \in S'_{i \pmod2}$, $K(s_{j+n} s_{j+n-1} \cdots s_j) = K(t_{j+n} t_{j+n-1} \cdots t_j)$ if and only if 
$(s_{j+n}, \dots, s_j) = (t_{j+n}, \dots, t_j)$. If $(s_{j+n}, \dots, s_j) \not= (t_{j+n}, \dots, t_j)$, then 
$K(s_{j+n} s_{j+n-1} \cdots s_j)$ and $K(t_{j+n} t_{j+n-1} \cdots t_j)$ have a trivial intersection and mutually commute. Moreover, from Lemmas 
\ref{K0K1} and \ref{K(...)}, it follows that $K(s_{j+n+1} s_{j+n} \cdots s_j)$ and $K(t_{j+n} t_{j+n-1} \cdots t_j)$ have a trivial intersection and mutually 
commute for any choice of $s_i$ and $t_i.$
\par
Thus, if we consider for $j=0,1$ and for $n \in \mathbb{N}_0$, the following subgroups of $H$,
\begin{equation} \label{calK'}
{\mathcal K}'(j,n) = \underset{s_i \in S'_{i\pmod 2}}{\bigoplus} K(s_j \dotsm s_{j+n}) \text{  and}
\end{equation}
\begin{equation} \label{calK}
{\mathcal K}(j,n) = \underset{s_i \in S'_{i\pmod 2}}{\underset{\gamma_j \in S_j}{\bigoplus}} K(\gamma_j s_{j+1} \dotsm s_{j+n}),
\end{equation}
then it is easy to see that:
\begin{prop} 
${\mathcal K}(j,n)$ is a normal subgroup of $G_j$, and ${\mathcal K}'(j,n)$ is a normal subgroup of $H$.
\end{prop}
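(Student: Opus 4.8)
The plan is to verify normality by showing that conjugation, by an element of $G_j$ in the first case and by an element of $H$ in the second, merely permutes the summands appearing in the direct-sum decompositions \eqref{calK} and \eqref{calK'}. Since each of these subgroups is generated by its family of summands, such a permutation fixes the whole subgroup setwise, which is exactly normality.

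Concretely, recall that each summand is of the form $K(w)=w\,K_\bullet\,w^{-1}$, where $w$ is the displayed alternating word and $K_\bullet$ is one of $K_0,K_1$, determined by the shape of $w$ and, by Lemma \ref{K0K1}, normal in $H$. Fix $g\in G_j$ and a summand $K(\gamma_j s_{j+1}\dotsm s_{j+n})$ of $\mathcal{K}(j,n)$, with $\gamma_j\in S_j$ and $s_i\in S'_{i\pmod 2}$. Then $g\,K(\gamma_j s_{j+1}\dotsm s_{j+n})\,g^{-1}=(g\gamma_j s_{j+1}\dotsm s_{j+n})\,K_\bullet\,(g\gamma_j s_{j+1}\dotsm s_{j+n})^{-1}$, so it suffices to bring $g\gamma_j s_{j+1}\dotsm s_{j+n}$ back into the required normal form. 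Using the coset decomposition of $G_j$, write $g\gamma_j=\gamma'_j h_0$ with $\gamma'_j\in S_j$ and $h_0\in H$; then push the $H$-factor to the right step by step, $h_0 s_{j+1}=s'_{j+1}h_1$, $h_1 s_{j+2}=s'_{j+2}h_2$, and so on, where $s'_{j+k}\in S_{j+k\pmod 2}$ is the coset representative of $h_{k-1}s_{j+k}$ and $h_k\in H$. This yields $g\gamma_j s_{j+1}\dotsm s_{j+n}=\gamma'_j s'_{j+1}\dotsm s'_{j+n}\,h_n$. Since $h_n$ is absorbed by $K_\bullet$ (as $K_\bullet\triangleleft H$) and the alternating shape of the word is unchanged, we obtain
\begin{equation*}
g\,K(\gamma_j s_{j+1}\dotsm s_{j+n})\,g^{-1}=K(\gamma'_j s'_{j+1}\dotsm s'_{j+n}),
\end{equation*}
which is again a summand of $\mathcal{K}(j,n)$: we have $\gamma'_j\in S_j$, and each $s'_{j+k}$ lies in $S'_{j+k\pmod 2}$ because $s_{j+k}\notin H$ forces $h_{k-1}s_{j+k}\notin H$, so its representative is not $1$ (when $\gamma'_j=1$ the word shortens to $s'_{j+1}\dotsm s'_{j+n}$, which is the $\gamma_j=1$ summand). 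As $g$ is invertible, conjugation by $g$ is a bijection of the indexing family, whence $g\,\mathcal{K}(j,n)\,g^{-1}=\mathcal{K}(j,n)$.

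For $\mathcal{K}'(j,n)\triangleleft H$ one argues mutatis mutandis, taking $h\in H$ in place of $g\in G_j$ and a summand $K(s_j s_{j+1}\dotsm s_{j+n})$ with all $s_i\in S'_{i\pmod 2}$; the only change is that $h s_j\notin H$, so already the first representative $s'_j$ lies in $S'_j$, which matches the indexing family of $\mathcal{K}'(j,n)$. Since the proposition is "easy", there is no genuine obstacle here; the one point that needs care is the bookkeeping in the rewriting, namely that pushing the $H$-factor rightward never produces a trivial coset representative (handled above) and that the inner group $K_\bullet$ is literally unchanged by the rewriting (true by Lemma \ref{K0K1} together with preservation of the word's alternating shape). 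One may also read the statement geometrically: by Proposition \ref{KjTj} the summands are pointwise stabilizers of translates of the half-tree $T_j$, conjugation translates these half-trees, and the content is that $G_j$, resp.\ $H$, permutes the collection of half-trees occurring at a fixed combinatorial position relative to the vertex $G_j$, resp.\ to the base edge $H$ — but the coset computation above is the most economical way to record it.
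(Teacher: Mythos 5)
Your proof is correct. The paper itself offers no argument for this proposition (it is introduced with ``it is easy to see that''), so there is nothing to diverge from; your coset-rewriting computation --- pushing the $H$-factor of $g\gamma_j$ rightward through the word, noting that no representative degenerates to $1$ because $h_{k-1}s_{j+k}\notin H$, and absorbing the residual $h_n$ into $K_\bullet$ via Lemma \ref{K0K1} --- is exactly the verification the author leaves implicit, and it is consistent with the preceding discussion (Lemmas \ref{K(...)} and \ref{K(.)K(.')}) that makes the direct sums well defined. The closing geometric reading via Proposition \ref{KjTj} (conjugation permuting the pointwise stabilizers of translated half-trees) is likewise faithful to the paper's viewpoint.
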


\begin{remark}
From Lemma \ref{K(...)}, it follows that there are subgroups of $K_1$ isomorphic to $K_0$ and vice versa. Consequently, $K_0 = \{ 1 \}$ if and only if 
$K_1 = \{ 1 \}$. In this situation, the groups ${\mathcal K}(j,n)$ and ${\mathcal K}'(j,n)$ are all trivial. 
\end{remark}

\subsection{A Family of Examples}

For $j=0,1$, consider nonempty sets $I'_j$, and let $I_j = I'_j \sqcup \{ \iota_j \}$ and transitive permutation groups $\Gamma_j$ on $I_j$ 
having stabilizer groups $\Gamma'_j \ \equiv \ (\Gamma_j)_{\iota_j}$ that are not both trivial. We define a family of groups that depend on 
$\Gamma_0$ and $\Gamma_1$ as follows: 
$$G[\Gamma_0,\Gamma_1] \ \equiv \ G[I_0,I_1;\iota_0,\iota_1;\Gamma_0,\Gamma_1] \ \equiv \ G_0 *_H G_1,$$
where $H \ = \langle Q_0, Q_1 \rangle$ and where 
\begin{multline*}
Q_j = \\ 
\langle \  \{ \  h_j( i_j, i_{j+1}, \dots, i_{j+n} ; \sigma_{j+n+1}) \ 
| \ n \in \mathbb{Z}_0, \ i_k \in I'_{k \pmod 2},\ \sigma_{j+n+1} \in \Gamma'_{j+n+1\pmod2} \ \} \\ 
\ \cup \ \{ \ g_j( \sigma_j) \ | \ \sigma_j \in \Gamma_j'\ \} \ \rangle 
\end{multline*}
for $j=0,1$. Finally, $G_j = \langle H \cup \{ \ g_j(\sigma_j) \ | \ \sigma_j \in \Gamma_j \setminus \Gamma'_j \ \} \rangle$.
\par
The following relations hold (there are redundancies): \\ 
(R1) The groups $Q_0$ and $Q_1$ mutually commute. \\ 
(R2) For $j=0,1$, $0 \leq m \leq n$, $i_k, s_k \in I'_{k \pmod 2}$ with $(i_j, \dots, i_{j+m}) \not= (s_j,  \dots, s_{j+m})$, 
and for $\sigma_k \in \Gamma'_{k\pmod2}$, the elements
$$h_j( s_j,  \dots, s_{j+m} ; \sigma_{j+m+1}) \ \ \text{   and   } \ \ h_j( i_j, \dots, i_{j+m}, \dots, i_{j+n} ; \sigma_{j+n+1})$$ 
commute. \\ 
(R3) For $j=0,1$, $0 \leq m < n$, $i_k \in I'_{k \pmod 2}$, and for $\sigma_k \in \Gamma'_{k\pmod2}$, the following holds 
\begin{multline*}
h_j( i_j,  \dots, i_{j+m} ; \sigma_{j+m+1}) h_j( i_j, \dots, i_{j+m}, i_{j+m+1} \dots, i_{j+n} ; \sigma_{j+n+1}) 
h_j( i_j,  \dots, i_{j+m} ; \sigma_{j+m+1})^{-1} \\  
= h_j( i_j, \dots, i_{j+m}, \sigma_{j+m+1}(i_{j+m+1}), \dots, i_{j+n} ; \sigma_{j+n+1}). 
\end{multline*}
(R4) For $j=0,1$, $m \in \mathbb{Z}_0$, $i_k \in I'_{k \pmod 2}$, and $\sigma_{j+m+1}, \tilde{\sigma}_{j+m+1} \in \Gamma'_{j+m+1\pmod2}$, 
the following hold
$$ h_j( i_j, \dots, i_{j+m} ; \id) = 1,$$
$$ h_j( i_j, \dots, i_{j+m} ; \tilde{\sigma}_{j+m+1}) h_j( i_j, \dots, i_{j+m} ; \sigma_{j+m+1}) \ = \ 
h_j( i_j, \dots, i_{j+m} ; \tilde{\sigma}_{j+m+1} \sigma_{j+m+1}),$$
and 
$$ h_j( i_j, \dots, i_{j+m} ; \sigma_{j+m+1})^{-1} = h_j( i_j, \dots, i_{j+m} ; \sigma_{j+m+1}^{-1}).$$
(R5) For $j=0,1$ and $\sigma_j, \tilde{\sigma}_j \in \Gamma_j$, the following hold
$$ g_j(\id) = 1,\ \ \ \ \ g_j(\sigma_j) g_j(\tilde{\sigma}_j) = g_j(\sigma_j \tilde{\sigma}_j), \ \ \ \text{ and }\ \ \ g_j(\sigma_j)^{-1} = g_j(\sigma_j^{-1}).$$
(R6) For $j=0,1$,\ $m \in \mathbb{Z}_0$, $i_k \in I'_{k\pmod2}$, $\sigma_j \in \Gamma_j$, and $\sigma'_{j+m+1} \in \Gamma'_{j+m+1\pmod2}$, 
the following holds
\begin{multline*}
g_j(\sigma_j)  h_j(i_j, \dots, i_{j+m}; \sigma'_{j+m+1})  g_j(\sigma_j)^{-1} \ = \\
\begin{cases}
g_{j+1\pmod2}(\sigma'_{j+m+1}) \text{  if } \sigma_j(i_j) = \iota_j \text{ and } \ m = 0, \\ 
h_{j+1\pmod2}(i_{j+1}, \dots, i_{j+m}; \sigma'_{j+m+1}) \text{  if } \sigma_j(i_j) = \iota_j \text{ and } \ m \geq 1, \\
h_j(\sigma_j(i_j), \dots, i_{j+m}; \sigma'_{j+m+1}) \text{  if } \sigma_j(i_j) \not= \iota_j, \text{ and } \ m \geq 0. 
\end{cases}
\end{multline*}

\subsection{Some Basic Properties of the Examples and Their Quasi-Kernels}

For a group $G[I_0,I_1;\iota_0,\iota_1;\Gamma_0,\Gamma_1]$, let's note that $\Index[G_j : H] = \#(I_j),\ j =0,1$. To see this, recall that $\Gamma_j$ acts transitively on $I_j$, and for $i \in I'_j$, let $\tau_j^i \in \Gamma_j$ be such that $\tau_j^i(\iota_j) = i$. 
Let's denote $\gamma_j^i \equiv g_j(\tau_j^i)$, and take an element $\sigma \in \Gamma_j \setminus \Gamma_j'$ with $\sigma(\iota_j) = i$. 
Then $(\tau_j^i)^{-1} \circ \sigma (\iota_j) = \iota_j$, so 
$g_j((\tau_j^i)^{-1} \circ \sigma (\iota_j)) \in H$, and therefore $g_j(\sigma) \in g_j(\tau_j^i) H = \gamma_j^i H$. Thus
\begin{equation} \label{classes} 
G_j \ = \ H \sqcup \underset{i \in I'_j}{\bigsqcup} \gamma_j^i H.
\end{equation}

Consider the canonical action of $G = G[I_0,I_1;\iota_0,\iota_1;\Gamma_0,\Gamma_1]$ on its Bass-Serre tree $T = T[G]$. Adjacent vertices to the vertex 
$G_j$ different from $G_{j+1 \pmod2}$ can be indexed by the set $I'_j$, so we denote the vertex $G_j$ by $v(\iota_j)$, and for $i \in I'_j$, we denote 
the vertex $\gamma_j^i G_{j+1 \pmod2}$ by $v(\iota_j,i)$. Also, for $i_t \in I'_{t \pmod2}$, we denote the vertex 
$\gamma_j^{i_j} \gamma_{j+1 \pmod2}^{i_{j+1}} \cdots \gamma_{j+k \pmod2}^{i_{j+k}} G_{j+k+1 \pmod 2}$ by 
$v(\iota_j, i_j, \dots, i_{j+k})$. Note that, using notation from Remark \ref{Bass-Serre}, $T_j$ is the full subtree of $T$ containing the vertex 
$v(\iota_j)$ and the vertices $v(\iota_j, i_j, \dots, i_{j+k})$, where $i_t \in I'_{t \pmod2}$ and $j = 0,1$.

\begin{remark} \label{nondegenerate}
Notice that in the case $\#(I_0) = \#(I_1) = 2$ is impossible because of the requirement for non-triviality of the stabilizers, so 
$\#(I_0) \geq 3$ or $\#(I_1) \geq 3$. Therefore, the corresponding Bass-Serre tree is not a linear tree and the amalgam is nondegenerate 
(see \cite[Proposition 19]{Harpepreaux}).
\end{remark}

\begin{remark}
There is a resemblance of our examples (as well as the HNN-extension examples below) with the groups introduced by Le Boudec in \cite{leboudec16}. 
In fact, the example from \cite[Section 4]{IO} is isomorphic to one of the groups from \cite{leboudec16} (see \cite[Remark 4.7]{IO}). 
It can be shown that all of our examples satisfy the conditions of \cite[Theorem A]{leboudec17}. 
One benefit is that our groups are given concretely in terms of generators and relations. Whenever $\#(I_0) \not= \#(I_1)$, none of the groups 
$G[I_0,I_1;\iota_0,\iota_1;\Gamma_0,\Gamma_1]$ is covered in \cite{leboudec16} or in \cite[Theorem C]{leboudec17}. 
\end{remark} 

\begin{remark}
It is immediate from \cite[Theorem VI.9]{baumslag} that our examples are not finitely presented since $H$ is never finitely generated.
\end{remark}
\par
We need some easy facts about $G = G[I_0,I_1;\iota_0,\iota_1;\Gamma_0,\Gamma_1]$. 

\begin{lem} \label{generation}
(i) Let $m \geq 0$, $i_t \in I'_{t \pmod2}$, and $\sigma'_{j+m+1} \in \Gamma'_{j+m+1\pmod2}$. Then, 
\begin{multline*} 
h_j(i_j, \dots, i_{j+m}; \sigma'_{j+m+1}) = \\ 
= \gamma_j^{i_j} \cdots \gamma_{j+m \pmod2}^{i_{j+m}}
                                               g_{j+m+1 \pmod2}(\sigma'_{j+m+1}) (\gamma_{j+m \pmod2}^{i_{j+m}})^{-1} \cdots (\gamma_j^{i_j})^{-1}.
\end{multline*}

(ii) Every element $h \in Q_j$ can be written as 
$$h = g_j (\sigma'_j) \prod_{k=1}^{m} h_j(i_j^k, \dots, i_{j+n_k}^k; \sigma''_k),$$
where $m \geq 1, \ \sigma'_j \in \Gamma'_j,\ \sigma''_k \in \Gamma'_{j+n_k+1 \pmod2},\ 0 \leq n_1 \leq \dots \leq n_m$, and 
$i_t^k \in I'_{t \pmod2}$. 
\par
(iii) Every element $g \in T_{j,n}$ can be written as 
$$g = \gamma_j^{k_j} \cdots \gamma_{j+n-1\pmod2}^{k_{j+n-1}} h,$$
where $h \in H$ and $k_t \in I'_{t \pmod2}$.
\end{lem}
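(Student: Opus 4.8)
The plan is to prove the three parts of Lemma~\ref{generation} in order, each reducing to the defining generators and relations of $G[\Gamma_0,\Gamma_1]$ together with the coset decomposition~(\ref{classes}).

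For part (i), I would argue by induction on $m$. When $m=0$ the claim reads $h_j(i_j;\sigma'_{j+1}) = \gamma_j^{i_j}\, g_{j+1\pmod2}(\sigma'_{j+1})\,(\gamma_j^{i_j})^{-1}$, which is precisely the first case of (R6) read backwards: conjugating $h_j(i_j;\sigma'_{j+1})$ by $g_j(\tau_j^{i_j})^{-1} = (\gamma_j^{i_j})^{-1}$ sends $i_j$ to $\iota_j$ (since $\tau_j^{i_j}(\iota_j)=i_j$, hence $(\tau_j^{i_j})^{-1}(i_j)=\iota_j$) and $m=0$, so the conjugate is $g_{j+1\pmod2}(\sigma'_{j+1})$. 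For the inductive step, applying the same conjugation by $(\gamma_j^{i_j})^{-1}$ to $h_j(i_j,i_{j+1},\dots,i_{j+m};\sigma'_{j+m+1})$ falls into the second case of (R6), yielding $h_{j+1\pmod2}(i_{j+1},\dots,i_{j+m};\sigma'_{j+m+1})$, to which the induction hypothesis (with $j$ replaced by $j+1\pmod 2$ and $m$ by $m-1$) applies; reassembling gives the stated formula. I expect this to be routine once the index bookkeeping in (R6) is set up carefully.

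For part (ii), note that $Q_j$ is by definition generated by the elements $h_j(i_j,\dots,i_{j+n};\sigma_{j+n+1})$ and the elements $g_j(\sigma_j)$ with $\sigma_j\in\Gamma'_j$; by (R5) the latter multiply to a single $g_j(\sigma'_j)$, and by (R4) so do two $h_j$'s with the same index string. So an arbitrary word in $Q_j$ is a product of such generators and their inverses; using (R4) and (R5) to invert, one is reduced to a positive word. The content is then to push all occurrences of $g_j(\sigma'_j)$ to the front and to sort the $h_j$-factors by nondecreasing length $n_k$: the first is handled by (R6) (conjugating an $h_j$ past a $g_j(\sigma'_j)$ with $\sigma'_j\in\Gamma'_j$ — i.e.\ fixing $\iota_j$ — lands in the third case of (R6), producing another $h_j$ of the same length, possibly with a permuted leading index), and the sorting uses (R2) and (R3), which let factors with distinct or nested index strings be commuted or reordered at the cost of permuting indices. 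This is the step I expect to be the main obstacle, since it is essentially a normal-form / rewriting argument and one must check that the rewriting terminates and that no hidden relations among the $h_j$ obstruct it; I would present it as an explicit bubble-sort on the factors, invoking (R2)--(R4) for each transposition, and (R6) to absorb the front $g_j$-factor, being slightly informal about termination.

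For part (iii), recall $T_{j,n} = \{g_j\cdots g_{j+n-1}\mid g_i\in G_{i\pmod2}\setminus H\}$, and that by~(\ref{classes}) each $G_i\setminus H$ is the disjoint union of the cosets $\gamma_i^{k}H$ over $k\in I'_{i\pmod2}$. Thus each factor $g_{j+\ell}$ equals $\gamma_{j+\ell\pmod2}^{k_{j+\ell}}\, h^{(\ell)}$ for some $k_{j+\ell}\in I'_{j+\ell\pmod2}$ and $h^{(\ell)}\in H$. Substituting and then moving all the $H$-factors to the right: each $h^{(\ell)}$ must be commuted past the $\gamma$'s to its right. Since $H$ is normal in each $G_i$ — indeed $H=\langle Q_0,Q_1\rangle$ and the relations (R1), (R6) show $g_i(\sigma_i)H g_i(\sigma_i)^{-1}=H$ — each such commutation replaces $h^{(\ell)}$ by a conjugate still in $H$ without changing the $\gamma$'s, and the accumulated $H$-elements collapse (using the group law in $H$) to a single $h\in H$ at the far right. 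This gives $g = \gamma_j^{k_j}\cdots\gamma_{j+n-1\pmod2}^{k_{j+n-1}}h$ as required. This part should be the easiest, following immediately from~(\ref{classes}) and normality of $H$.
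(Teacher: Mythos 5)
Your treatments of (i) and (ii) are correct and follow the paper's (much terser) argument: for (i) the paper likewise reads (R6) backwards with $\sigma_j(\iota_j)=i_j$ and inducts on $m$, and for (ii) it simply cites (R4) and (R5); your additional bookkeeping with (R2), (R3) and the third case of (R6) to push the $g_j(\sigma'_j)$'s to the front and sort by length is a legitimate (indeed more complete) elaboration of the same idea.

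Part (iii), however, rests on a false claim: $H$ is \emph{not} normal in $G_i$. If it were normal in both $G_0$ and $G_1$ it would be normal in $G=\langle G_0\cup G_1\rangle$, forcing $\ker G=\bigcap_{g}gHg^{-1}=H\neq\{1\}$, contradicting Proposition \ref{KjQj}(ii); concretely, $g_j(\sigma)\,g_j(\sigma'_j)\,g_j(\sigma)^{-1}=g_j(\sigma\sigma'_j\sigma^{-1})$ lies outside $H$ whenever $\sigma'_j\in\Gamma'_j$ moves the point $\sigma^{-1}(\iota_j)$, which happens for some choice of $\sigma,\sigma'_j$ as soon as $\Gamma'_j$ is nontrivial. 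Relations (R1) and (R6) only control conjugation of $Q_j$ by $g_j(\cdot)$, not of all of $H$. The conclusion of (iii) is nevertheless reachable by the mechanism the paper intends (``equation (\ref{classes}) and the structure of the amalgams''): you do not need normality, only the left coset decomposition itself. Since $h^{(\ell)}\gamma_{i}^{k}\in G_{i}\setminus H$ for $h^{(\ell)}\in H$, equation (\ref{classes}) rewrites it as $\gamma_{i}^{k'}h'$ for a (generally \emph{different}) index $k'$ and some $h'\in H$; iterating this pushes all $H$-factors to the right. Note that your parenthetical ``without changing the $\gamma$'s'' is also wrong for the same reason --- left multiplication by $h\in H$ permutes the cosets $\gamma_i^kH$, e.g.\ $g_i(\sigma'_i)$ sends $\gamma_i^kH$ to $\gamma_i^{\sigma'_i(k)}H$ --- but this is harmless because the statement of (iii) only asserts the existence of \emph{some} indices $k_t$.
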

\begin{proof}
(i) For $\sigma_j \in \Gamma_j$, with $\sigma_j(\iota_j) = i_j$, (R6), read backwards, gives
$$h_j(i_j, \dots, i_{j+m}; \sigma'_{j+m+1}) = g_j(\sigma_j) h_{j+1\pmod2}(i_{j+1}, \dots, i_{j+m}; \sigma'_{j+m+1}) g_j(\sigma_j)^{-1}.$$
Now, (i) follows by induction. \\ 
(ii) Follows from (R4) and (R5) applied several times. \\ 
(iii) Follows from equation (\ref{classes}) and the structure of the amalgams. 
\end{proof}
\begin{lem} \label{action}
Let $m \geq 0$, $i_t \in I'_{t \pmod2}$, $\sigma'_j \in \Gamma'_j$, and $\sigma'_{j+m+1} \in \Gamma'_{j+m+1\pmod2}$. Then: 
\par
(i) For $n > m$, the following holds 
\begin{multline*}
h_j(i_j, \dots, i_{j+m}; \sigma'_{j+m+1}) v(\iota_j, i_j, \dots,i_{j+m}, i_{j+m+1}, i_{j+m+2}, \dots, i_{j+n}) = \\ 
= v(\iota_j, i_j, \dots,i_{j+m}, \sigma'_{j+m+1}(i_{j+m+1}), i_{j+m+2}, \dots, i_{j+n});
\end{multline*}
Also, $g_j(\sigma'_j) v(\iota_j, k_j) = v(\iota_j, \sigma'_j(k_j))$ for $k_j \in I'_j$. 
\par 
(ii) For $\sigma'_j \in \Gamma'_j$, it follows $g_j(\sigma'_j) \in G_{v(\iota_j)}$ and 
$h_j(i_j, \dots, i_{j+m}; \sigma'_{j+m+1}) \in G_{v(\iota_j, i_j, \dots, i_{j+m})}$. 
\par
(iii) For $\sigma'_j \in \Gamma'_j$, it follows $g_j(\sigma'_j) \in K_{j+1 \pmod2}$. 
\par
(iv) For $\sigma'_{j+m+1} \in \Gamma_{j+m+1 \pmod 2}$, it follows $h_j(i_j, \dots, i_{j+m}; \sigma'_{j+m+1}) \in K_{j+1 \pmod2}$. 
\par
(v) For $n \geq m$ and for $s_t \in I'_{t \pmod 2}$, with $(i_j, \dots, i_{j+m}) \not= (s_j,  \dots, s_{j+m})$, it follows 
$$h_j(s_j, \dots, s_{j+m}; \sigma'_{j+m+1}) \in G_{v(\iota_j, i_j, \dots, i_{j+m}, \dots, i_{j+n})}.$$
\end{lem}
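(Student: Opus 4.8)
The plan is to verify each item by unwinding the definitions of the vertices $v(\iota_j,i_j,\dots,i_{j+k})$ in terms of the coset representatives $\gamma^i_t = g_t(\tau^i_t)$ and then applying the relations (R1)--(R6) together with Lemma \ref{generation}(i). For (i), I would substitute the expression from Lemma \ref{generation}(i) for $h_j(i_j,\dots,i_{j+m};\sigma'_{j+m+1})$ and compute its action on the vertex $v(\iota_j,i_j,\dots,i_{j+n}) = \gamma_j^{i_j}\cdots\gamma_{j+n\pmod 2}^{i_{j+n}} G_{j+n+1\pmod 2}$ by left multiplication; all the $\gamma$-factors outside the block $\gamma_j^{i_j}\cdots\gamma_{j+m\pmod 2}^{i_{j+m}}$ cancel, leaving $g_{j+m+1\pmod 2}(\sigma'_{j+m+1})$ acting on $\gamma_{j+m+1\pmod 2}^{i_{j+m+1}} G_{j+m+2\pmod 2}$, and by the same argument as in equation (\ref{classes}) this sends the coset $\gamma_{j+m+1}^{i_{j+m+1}} H$ to $\gamma_{j+m+1}^{\sigma'_{j+m+1}(i_{j+m+1})} H$ (since $\sigma'_{j+m+1}$ fixes $\iota$), which is exactly the assertion. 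The statement $g_j(\sigma'_j) v(\iota_j,k_j) = v(\iota_j,\sigma'_j(k_j))$ is the base case of this same computation.

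Item (ii) is immediate from (i): $g_j(\sigma'_j)$ fixes $v(\iota_j)=G_j$ because $g_j(\sigma'_j)\in H\subset G_j$, and $h_j(i_j,\dots,i_{j+m};\sigma'_{j+m+1})$ fixes $v(\iota_j,i_j,\dots,i_{j+m})$ because by Lemma \ref{generation}(i) it is a conjugate of $g_{j+m+1\pmod 2}(\sigma'_{j+m+1})\in H$ by $\gamma_j^{i_j}\cdots\gamma_{j+m}^{i_{j+m}}$, and that element $\in G_{j+m+1\pmod2}$ stabilizes $\gamma_{j+m}^{i_{j+m}}\cdots G_{j+m+1\pmod 2}$; conjugating back gives the claim. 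For (iii) and (iv), I would use Proposition \ref{KjTj}(i), which identifies $K_{j+1\pmod 2}$ with $G_{(T_{j+1\pmod 2})}$, and recall from the paragraph after Remark \ref{nondegenerate} that $T_{j+1\pmod 2}$ is the full subtree on $v(\iota_{j+1})$ together with the vertices $v(\iota_{j+1}, \dots)$ — equivalently the half of $T$ on the far side of the edge $H$ from $G_j$. So it suffices to check that $g_j(\sigma'_j)$, resp.\ $h_j(i_j,\dots,i_{j+m};\sigma'_{j+m+1})$, fixes every vertex and edge of $T_{j+1\pmod 2}$: for $g_j(\sigma'_j)\in\Gamma'_j$ this is because it lies in $H$ (hence fixes the edge $H$ and the vertex $G_{j+1\pmod 2}$) and permutes the branches at $G_j$ while fixing the one containing $\iota_j$, i.e.\ fixing all of $T_{j+1\pmod 2}$; for $h_j(\cdots;\sigma'_{j+m+1})$ it is because, being supported deep inside $T_j$ (it acts nontrivially only at vertices of the form $v(\iota_j, i_j,\dots,i_{j+m},\ast,\dots)$ by part (i)), it fixes everything in the complementary subtree $T_{j+1\pmod 2}$. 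Finally (v) follows from (i) — or more directly from (R2) and part (ii) — since when $(i_j,\dots,i_{j+m})\neq(s_j,\dots,s_{j+m})$ the vertex $v(\iota_j,i_j,\dots,i_{j+n})$ lies in a branch of $T_j$ disjoint (past their common initial segment) from the support of $h_j(s_j,\dots,s_{j+m};\sigma'_{j+m+1})$, so the latter fixes it.

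I expect the main obstacle to be the bookkeeping in (i): carefully tracking which coset representative $\gamma^i$ appears in which slot, and making sure the reduction of $g_{j+m+1\pmod 2}(\sigma'_{j+m+1})\gamma_{j+m+1}^{i_{j+m+1}}$ back into the normal form $\gamma_{j+m+1}^{\sigma'(i_{j+m+1})} h$ with $h\in H$ is justified exactly as in the derivation of (\ref{classes}); once (i) is in hand, (ii)--(v) are essentially reformulations of Proposition \ref{KjTj}(i) and the description of the subtrees $T_j$, with (iii) and (iv) requiring only the observation that the listed elements act trivially on the entire complementary half-tree.
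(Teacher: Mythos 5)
Your overall strategy matches the paper's: unwind $h_j(\cdots;\sigma')$ via Lemma \ref{generation}(i), compute the action on cosets, and identify $K_{j+1\pmod 2}$ with $G_{(T_{j+1\pmod2})}$ via Proposition \ref{KjTj}(i). But there is a recurring gap: throughout (i), (iii), (iv) you control the action only at the \emph{first} level beyond the relevant block, whereas the whole point of the lemma is to control it arbitrarily deep in the tree. In (i), after the prefix cancels you are left with
$\gamma_j^{i_j}\cdots\gamma_{j+m}^{i_{j+m}}\, g_{j+m+1\pmod2}(\sigma'_{j+m+1})\,\gamma_{j+m+1}^{i_{j+m+1}}\gamma_{j+m+2}^{i_{j+m+2}}\cdots\gamma_{j+n}^{i_{j+n}}\,G_{j+n+1\pmod2}$;
the tail $\gamma_{j+m+2}^{i_{j+m+2}}\cdots\gamma_{j+n}^{i_{j+n}}$ does \emph{not} cancel. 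Rewriting $g_{j+m+1\pmod2}(\sigma')\gamma_{j+m+1}^{i_{j+m+1}}=\gamma_{j+m+1}^{\sigma'(i_{j+m+1})}g_{j+m+1\pmod2}(\sigma)$ with $\sigma=(\tau^{\sigma'(i_{j+m+1})}_{j+m+1})^{-1}\sigma'\tau^{i_{j+m+1}}_{j+m+1}\in\Gamma'_{j+m+1\pmod2}$ leaves a residual element of $H$ sitting to the left of the tail, and you must still prove it fixes the vertex $\gamma_{j+m+2}^{i_{j+m+2}}\cdots\gamma_{j+n}^{i_{j+n}}G_{j+n+1\pmod2}$. A generic element of $H$ does not do this, so "the same argument as in (\ref{classes})" is not enough. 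The paper's key step is the identity, obtained from Lemma \ref{generation}(i) and (iii), that the conjugate of $g_{j+m+1\pmod2}(\sigma)$ by the tail equals $\chi\, h_{j+n}(t_{j+n},\dots,t_{j+m+2};\sigma)\,\chi^{-1}\in H\subset G_{j+n+1\pmod2}$, so it is absorbed into the coset. This idea is absent from your plan.

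The same issue undermines your (iii): $g_j(\sigma'_j)$ "fixing the branch containing $\iota_j$" is only a \emph{setwise} statement, while $K_{j+1\pmod2}=G_{(T_{j+1\pmod2})}$ is a \emph{pointwise} stabilizer, and membership in $H$ is far from sufficient (every element of $Q_j\subset H$ fixes the edge $H$ and the vertex $G_{j+1\pmod2}$, yet $Q_j\cap K_j$ is trivial by Proposition \ref{KjQj}). One must again conjugate $g_j(\sigma'_j)$ down an arbitrary path $\gamma_{j+1\pmod2}^{i_{j+1}}\cdots\gamma_{j+n\pmod2}^{i_{j+n}}$ and check the result lands in $H$ — which is exactly the displayed computation in the paper's proof of (iii). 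Similarly, in (iv) the claim that $h_j(\cdots)$ "acts nontrivially only at vertices of the form $v(\iota_j,i_j,\dots,i_{j+m},\ast,\dots)$" does not follow from part (i), which only describes the action \emph{on} such vertices; the paper instead deduces (iv) from (iii) together with Lemma \ref{K(...)}, via $h_j(\cdots)\in K(\gamma_j^{i_j}\cdots\gamma_{j+m\pmod2}^{i_{j+m}})<K_{j+1\pmod2}$. Your reading of (ii) and the branch-disjointness idea behind (v) are essentially correct, though (v) also ultimately reduces to the same conjugation computation.
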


\begin{proof} 
(i) First note that, by the structure of the amalgams and from Lemma \ref{generation} (i) and (iii), it follows that there are $t_l \in I'_l$ and a 
$\chi \in H$ satisfying 
$$(\gamma_{j+m+2 \pmod2}^{i_{j+m+2}} \cdots \gamma_{j+n \pmod2}^{i_{j+n}})^{-1} = 
\chi \gamma_{j+n \pmod2}^{t_{j+n}} \cdots \gamma_{j+m+2 \pmod2}^{t_{j+m+2}}.$$ 
Then note that, 
$$\sigma \equiv (\tau^{\sigma'_{j+m+1} (i_{j+m+1})}_{j+m+1})^{-1} \circ \sigma'_{j+m+1} \circ \tau_{j+m+1}^{i_{j+m+1}} 
\in \Gamma'_{j+m+1 \pmod2},$$
since $\sigma$ fixes $\iota_{j+m+1 \pmod2}$. Then, it follows that 
\begin{multline*} 
(\gamma_{j+m+2 \pmod2}^{i_{j+m+2}} \cdots \gamma_{j+n \pmod2}^{i_{j+n}})^{-1}  g_{j_m+1}(\sigma) 
\gamma_{j+m+2 \pmod2}^{i_{j+m+2}} \cdots \gamma_{j+n \pmod2}^{i_{j+n}} = \\ 
\chi h_{j+n}(t_{j+n}, \dots, t_{j+m+2}; \sigma) \chi^{-1}.
\end{multline*}
Therefore,  
\begin{multline*}
h_j(i_j, \dots, i_{j+m}; \sigma'_{j+m+1}) v(\iota_j, i_j, \dots,i_{j+m}, i_{j+m+1}, i_{j+m+2}, \dots, i_{j+n}) = \\ 
= \gamma_j^{i_j} \cdots \gamma_{j+m \pmod2}^{i_{j+m}} g_{j+m+1 \pmod2}(\sigma'_{j+m+1})
 (\gamma_{j+m \pmod2}^{i_{j+m}})^{-1} \cdots (\gamma_j^{i_j})^{-1} \cdot \\ 
\cdot \gamma_j^{i_j} \cdots \gamma_{j+m \pmod2}^{i_{j+m}} \gamma_{j+m+1 \pmod2}^{i_{j+m+1}} 
\gamma_{j+m+2 \pmod2}^{i_{j+m+2}} \cdots \gamma_{j+n \pmod2}^{i_{j+n}} G_{j+n+1 \pmod 2} = \\ 
= \gamma_j^{i_j} \gamma_{j+1 \pmod2}^{i_{j+1}} \cdots \gamma_{j+m \pmod2}^{i_{j+m}} 
g_{j+m+1 \pmod2}(\sigma'_{j+m+1}) \gamma_{j+m+1 \pmod2}^{i_{j+m+1}} \cdot \\ 
\cdot \gamma_{j+m+2 \pmod2}^{i_{j+m+2}} \cdots \gamma_{j+n \pmod2}^{i_{j+n}} G_{j+n+1 \pmod 2} = \\ 
= \gamma_j^{i_j} \gamma_{j+1 \pmod2}^{i_{j+1}} \cdots \gamma_{j+m \pmod2}^{i_{j+m}} 
\gamma^{\sigma'_{j+m+1} (i_{j+m+1})}_{j+m+1} g_{j+m+1 \pmod2}(\sigma) \cdot \\ 
\cdot \gamma_{j+m+2 \pmod2}^{i_{j+m+2}} \cdots \gamma_{j+n \pmod2}^{i_{j+n}} G_{j+n+1 \pmod 2} = \\ 
= \gamma_j^{i_j} \gamma_{j+1 \pmod2}^{i_{j+1}} \cdots \gamma_{j+m \pmod2}^{i_{j+m}} 
\gamma^{\sigma'_{j+m+1} (i_{j+m+1})}_{j+m+1} \gamma_{j+m+2 \pmod2}^{i_{j+m+2}} \cdots \gamma_{j+n \pmod2}^{i_{j+n}} \cdot \\ 
(\gamma_{j+m+2 \pmod2}^{i_{j+m+2}} \cdots \gamma_{j+n \pmod2}^{i_{j+n}})^{-1}
g_{j+m+1 \pmod2}(\sigma) \gamma_{j+m+2 \pmod2}^{i_{j+m+2}} \cdots \gamma_{j+n \pmod2}^{i_{j+n}} \cdot \\ 
G_{j+n+1 \pmod 2} = \\ 
= \gamma_j^{i_j} \gamma_{j+1 \pmod2}^{i_{j+1}} \cdots \gamma_{j+m \pmod2}^{i_{j+m}} 
\gamma^{\sigma'_{j+m+1} (i_{j+m+1})}_{j+m+1} \gamma_{j+m+2 \pmod2}^{i_{j+m+2}} \cdots \gamma_{j+n \pmod2}^{i_{j+n}} \cdot \\ 
\chi h_{j+n}(t_{j+n}, \dots, t_{j+m+2}; \sigma) \chi^{-1} G_{j+n+1 \pmod 2} = \\ 
= v(\iota_j, i_j, \dots,i_{j+m}, \sigma'_{j+m+1}(i_{j+m+1}), i_{j+m+2}, \dots, i_{j+n})
\end{multline*}
Second statement is clear. \\ 
(ii) First claim is obvious. Second claim follows from 
\begin{multline*} 
h_j(i_j, \dots, i_{j+m}; \sigma'_{j+m+1}) v(\iota_j, i_j, \dots, i_{j+m}) = \\ 
\gamma_j^{i_j} \cdots \gamma_{j+m \pmod2}^{i_{j+m}} g_{j+m+1 \pmod2}(\sigma'_{j+m+1})
 (\gamma_{j+m \pmod2}^{i_{j+m}})^{-1} \cdots (\gamma_j^{i_j})^{-1} \cdot \\ 
\gamma_j^{i_j} \gamma_{j+1 \pmod2}^{i_{j+1}} \cdots \gamma_{j+m \pmod2}^{i_{j+m}} G_{j+m+1 \pmod 2} = \\ 
\gamma_j^{i_j} \cdots \gamma_{j+m \pmod2}^{i_{j+m}} g_{j+m+1 \pmod2}(\sigma'_{j+m+1}) G_{j+m+1 \pmod 2} = \\ 
v(\iota_j, i_j, \dots, i_{j+m}).
\end{multline*} 
(iii) It follows by Proposition \ref{KjTj} (i) that this is equivalent to $g_j(\sigma'_j) \in G_{(T_{j+1 \pmod2})}$. 
From $g_j(\sigma'_j) \in H$, it immediately follows that $g_j(\sigma'_j) \in G_{v(\iota_{j+1 \pmod2})} = G_{j+1 \pmod2}$. 
It remains to show that, for any $n \geq 1$ and any $i_t \in I'_t$, it follows 
$g_j(\sigma'_j) \in G_{v(\iota_{j+1 \pmod2}, i_{j+1}, \dots, i_{j+n})}$. 
From the argument at the beginning of the proof of point (i), it follows that 
$$(\gamma_{j+n \pmod2}^{i_{j+n}})^{-1} \cdots (\gamma_{j+1 \pmod2}^{i_{j+1}})^{-1} = 
\chi' \gamma_{j+n \pmod2}^{k_{j+n}} \cdots \gamma_{j+1 \pmod2}^{k_{j+1}}$$
for some $k_l \in I'_l$ and some $\chi' \in H$. Consequently, 
\begin{multline*}
(\gamma_{j+n \pmod2}^{i_{j+n}})^{-1} \cdots (\gamma_{j+1 \pmod2}^{i_{j+1}})^{-1} 
g_j(\sigma'_j) \gamma_{j+1 \pmod2}^{i_{j+1}} \cdots \gamma_{j+n \pmod2}^{i_{j+n}} = \\ 
= \chi h_{j+n \pmod2}(\iota_{j+n \pmod2}, k_{j+n \pmod2}, \dots, k_{j+1 \pmod2}; \sigma'_j) \chi^{-1}.
\end{multline*}
Finally, 
\begin{multline*}
g_j(\sigma'_j) v(\iota_{j+1 \pmod2}, i_{j+1}, \dots, i_{j+n}) = \\ 
= g_j(\sigma'_j) \gamma_{j+1 \pmod2}^{i_{j+1}} \cdots \gamma_{j+n \pmod2}^{i_{j+n}} G_{j+n+1 \pmod2} = \\ 
= \gamma_{j+1 \pmod2}^{i_{j+1}} \cdots \gamma_{j+n \pmod2}^{i_{j+n}} 
\cdot (\gamma_{j+n \pmod2}^{i_{j+n}})^{-1} \cdots (\gamma_{j+1 \pmod2}^{i_{j+1}})^{-1} \cdot \\ 
\cdot g_j(\sigma'_j) \gamma_{j+1 \pmod2}^{i_{j+1}} \cdots \gamma_{j+n \pmod2}^{i_{j+n}} G_{j+n+1 \pmod2} = \\ 
= \gamma_{j+1 \pmod2}^{i_{j+1}} \cdots \gamma_{j+n \pmod2}^{i_{j+n}} \cdot \\ 
\cdot \chi h_{j+n \pmod2}(\iota_{j+n \pmod2}, k_{j+n \pmod2}, \dots, k_{j+1 \pmod2}; \sigma'_j) \chi^{-1} G_{j+n+1 \pmod2} = \\ 
= \gamma_{j+1 \pmod2}^{i_{j+1}} \cdots \gamma_{j+n \pmod2}^{i_{j+n}} G_{j+n+1 \pmod2} = \\ 
= v(\iota_{j+1 \pmod2}, i_{j+1}, \dots, i_{j+n}).
\end{multline*}
(iv) From Lemma \ref{generation} (i), we write 
\begin{multline*} 
h_j(i_j, \dots, i_{j+m}; \sigma'_{j+m+1}) = \\ 
= \gamma_j^{i_j} \cdots \gamma_{j+m \pmod2}^{i_{j+m}}
                                               g_{j+m+1 \pmod2}(\sigma'_{j+m+1}) (\gamma_{j+m \pmod2}^{i_{j+m}})^{-1} \cdots (\gamma_j^{i_j})^{-1}.
\end{multline*}
By (iii), we have $g_{j+m+1 \pmod2}(\sigma'_{j+m+1}) \in K_{j+m \pmod2}$, and therefore, by definition and by Lemma \ref{K(...)}, it follows 
$h_j(i_j, \dots, i_{j+m}; \sigma'_{j+m+1}) \in K(\gamma_j^{i_j} \cdots \gamma_{j+m \pmod2}^{i_{j+m}}) < K_{j+1 \pmod2}$. \\ 
(v) Note that, in $\gamma \equiv (\gamma_{j+m \pmod2}^{s_{j+m}})^{-1} \cdots (\gamma_j^{s_j})^{-1} 
\gamma_j^{i_j}  \cdots \gamma_{j+n \pmod2}^{i_{j+n}}$ there is an even number $0 \geq 2r < 2m+2$ of cancelations. Therefore, 
$\gamma \in T_{j+m \pmod2, m+n+2-2r-1}=T_{j+m \pmod2, m+n+1-2r}$ starts with an element of $G_{j+m \pmod2}$ and ends with an element of 
$G_{j+n \pmod2}$. By Lemma \ref{generation} (iii), it can be written as 
$$\gamma = \gamma_{j+m \pmod2}^{k_{j+m}} \cdots \gamma_{j+2m+n-2r\pmod2}^{k_{j+2m+n-2r}} h = 
\gamma_{j+m \pmod2}^{k_{j+m}} \cdots \gamma_{j+n\pmod2}^{k_{j+2m+n-2r}} h$$
for some $k_t \in I'_t$. Then, 
\begin{equation*} 
\begin{aligned}
h_j(s_j, \dots, s_{j+m}; \sigma'_{j+m+1}) \in G_{v(\iota_j, i_j, \dots, i_{j+m}, \dots, i_{j+n})} \ \ \ & \Longleftrightarrow \\ 
\gamma_j^{s_j} \cdots \gamma_{j+m \pmod2}^{s_{j+m}} g_{j+m+1 \pmod2}(\sigma'_{j+m+1}) 
(\gamma_{j+m \pmod2}^{s_{j+m}})^{-1} \cdots (\gamma_j^{s_j})^{-1} \in \\ 
\in G_{v(\iota_j, i_j, \dots, i_{j+m}, \dots, i_{j+n})} \ \ \ & \Longleftrightarrow \\ 
g_{j+m+1 \pmod2}(\sigma'_{j+m+1})  \in \\  
\in (\gamma_{j+m \pmod2}^{s_{j+m}})^{-1} \cdots (\gamma_j^{s_j})^{-1} G_{v(\iota_j, i_j, \dots, i_{j+m}, \dots, i_{j+n})}
\gamma_j^{s_j} \cdots \gamma_{j+m \pmod2}^{s_{j+m}}  \ \ \ & \Longleftrightarrow \\  
g_{j+m+1 \pmod2}(\sigma'_{j+m+1})  \in
G_{(\gamma_{j+m \pmod2}^{s_{j+m}})^{-1} \cdots (\gamma_j^{s_j})^{-1} v(\iota_j, i_j, \dots, i_{j+m}, \dots, i_{j+n})} \ \ \ & \Longleftrightarrow \\  
g_{j+m+1 \pmod2}(\sigma'_{j+m+1})  \in 
G_{(\gamma_{j+m \pmod2}^{s_{j+m}})^{-1} \cdots (\gamma_j^{s_j})^{-1} 
\gamma_j^{i_j}  \cdots \gamma_{j+n \pmod2}^{i_{j+n}} G_{j+n+1 \pmod 2}} \ \ \ & \Longleftrightarrow \\ 
g_{j+m+1 \pmod2}(\sigma'_{j+m+1})  \in  
G_{\gamma_{j+m \pmod2}^{k_{j+m}} \cdots \gamma_{j+n\pmod2}^{k_{j+2m+n-2r}} h G_{j+n+1 \pmod 2}}  \ \ \ & \Longleftrightarrow \\
g_{j+m+1 \pmod2}(\sigma'_{j+m+1})  \in  
G_{v(\iota_{j+m \pmod2}, k_{j+m} \dots, k_{j+2m+n-2r})}. & 
\end{aligned}
\end{equation*}
Last line holds according to (iii).
\end{proof}

\begin{prop} \label{KjQj}
For a group $G = G[I_0,I_1;\iota_0,\iota_1;\Gamma_0,\Gamma_1]$ the following hold: \\ 
(i) $K_j = Q_{j+1 \pmod2}$. \\ 
(ii) $\ker G = \{ 1 \}$.
\end{prop}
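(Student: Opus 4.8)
The plan is to reduce both assertions to one \emph{faithfulness statement}: for $l\in\{0,1\}$, if $b\in Q_l$ acts trivially on the Bass--Serre tree $T$, then $b=1$. This statement does not presuppose (ii), so there is no circularity, and granting it both parts come out cheaply. For (i): the inclusion $Q_{j+1\pmod 2}\subseteq K_j$ is immediate from Lemma \ref{action} (iii),(iv), which place every generator of $Q_{j+1\pmod 2}$ inside $K_{j+2\pmod 2}=K_j$. Conversely, given $h\in K_j$ we have $h\in H=\langle Q_0,Q_1\rangle$ by Lemma \ref{K0K1}, and since $Q_0,Q_1$ commute elementwise (R1) we may write $h=ab$ with $a\in Q_j$, $b\in Q_{j+1\pmod 2}$. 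Then $b\in K_j$ by what we just proved, so $a=hb^{-1}\in K_j=G_{(T_j)}$ (Proposition \ref{KjTj} (i)); but also $a\in Q_j\subseteq K_{j+1\pmod 2}=G_{(T_{j+1\pmod 2})}$, so $a$ fixes all of $T=T_j\cup T_{j+1\pmod 2}$ and the faithfulness statement gives $a=1$, i.e. $h=b\in Q_{j+1\pmod 2}$. For (ii): by (i) and (\ref{eq:kerG}), $\ker G=K_0\cap K_1=Q_1\cap Q_0$; any element of this intersection lies in $G_{(T_1)}\cap G_{(T_0)}=G_{(T)}$, hence acts trivially on $T$, hence (being in $Q_0$) is $1$.

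So everything rests on the faithfulness statement, which I would prove by induction on the \emph{reach} $N$ of $b$ (uniformly in $l$): writing $b=g_l(\sigma'_l)\prod_k h_l(i^k_l,\dots,i^k_{l+n_k};\sigma''_k)$ as in Lemma \ref{generation} (ii), put $N=\max_k n_k$, with $N=-1$ if there are no $h_l$-factors. For the base case $b=g_l(\sigma'_l)$, Lemma \ref{action} (i) shows $b$ sends each neighbour $v(\iota_l,i)$ of $v(\iota_l)$ to $v(\iota_l,\sigma'_l(i))$; triviality then forces $\sigma'_l$ to fix every $i\in I'_l$, and since $\sigma'_l\in\Gamma'_l$ also fixes $\iota_l$ and $\Gamma_l$ acts faithfully on $I_l=I'_l\sqcup\{\iota_l\}$, we get $\sigma'_l=\id$ and $b=1$ by (R5).

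For the inductive step I would first show that each factor $h_l(i^k_l,\dots;\sigma''_k)$ fixes \emph{every} neighbour $v(\iota_l,i)$, $i\in I'_l$. By Lemma \ref{generation} (i) this factor equals $\gamma^{i^k_l}_l\,c\,(\gamma^{i^k_l}_l)^{-1}$ with $c\in Q_{l+1\pmod 2}\subseteq K_l=G_{(T_l)}$ (namely $c=h_{l+1\pmod 2}(i^k_{l+1},\dots;\sigma''_k)$ when $n_k\ge 1$ and $c=g_{l+1\pmod 2}(\sigma''_k)$ when $n_k=0$); since $\gamma^{i^k_l}_l$ fixes $v(\iota_l)$ and carries the incident edge to $v(\iota_l,i^k_l)$, the subtree $\gamma^{i^k_l}_l T_l$ is the complement of the branch $D_{i^k_l}$ of $T$ on the far side of the edge $\{v(\iota_l),v(\iota_l,i^k_l)\}$, so the factor fixes all $v(\iota_l,i)$ with $i\ne i^k_l$; and for $i=i^k_l$ the factor fixes both $v(\iota_l)$ and (Lemma \ref{action} (ii)) a vertex of $D_{i^k_l}$, hence the geodesic between them, which runs through $v(\iota_l,i^k_l)$. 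Thus $b$ acts on the star of $v(\iota_l)$ exactly as $\sigma'_l$, so as in the base case $g_l(\sigma'_l)=1$ and $b=\prod_k h_l(i^k_l,\dots;\sigma''_k)$. Now (R2) lets me sort these factors by their leading index into mutually commuting blocks $b=\prod_{i\in I'_l}a_i$, where $a_i\in Q_l$ collects the factors with leading index $i$; by the same computation $a_i=\gamma^i_l\,b_i\,(\gamma^i_l)^{-1}$ with $b_i\in Q_{l+1\pmod 2}$ (delete the leading index via Lemma \ref{generation} (i)), and $a_i$ fixes $T\setminus D_i$ pointwise while the $D_i$ are pairwise disjoint. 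Hence triviality of $b$ forces every $a_i$, and so every $b_i=(\gamma^i_l)^{-1}a_i\gamma^i_l$ (use $\ker G\trianglelefteq G$), to act trivially on $T$. Finally $b_i$ admits a Lemma \ref{generation} (ii)-expression with all $h$-factors of reach $\le N-1$ (deleting a leading index lowers each factor's reach by one, the $n_k=0$ factors turn into $g_{l+1\pmod 2}$-terms absorbed into the leading $g$, and re-normalising via (R2)--(R6) never raises the reach), so by induction $b_i=1$, whence $a_i=1$ and $b=1$.

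I expect the main obstacle to be exactly this last piece of bookkeeping: verifying that bringing $b_i$ back into the shape of Lemma \ref{generation} (ii) does not raise the reach --- one tracks the reach through (R2)--(R6), noting that conjugating an $h$-factor by a $g$-element keeps it an $h$-factor of the same reach (third case of (R6)), the second case of (R6) strictly lowers it, and (R3)--(R4) preserve it --- together with the clean treatment of the degenerate $n_k=0$ factors via $g_{l+1\pmod 2}(\sigma''_k)\in K_l$ from Lemma \ref{action} (iii). Everything else is a routine application of (R1)--(R6) and of Lemmas \ref{K0K1}, \ref{generation}, \ref{action} and Proposition \ref{KjTj} already established.
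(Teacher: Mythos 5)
Your proposal is correct, and its outer skeleton matches the paper's: both get $Q_{j+1 \pmod 2}\subseteq K_j$ from Lemma \ref{action} (iii)--(iv), and both use $H=Q_0Q_1$ (relation (R1)) to reduce the reverse inclusion to the triviality of an intersection of $Q_j$ with a pointwise stabilizer. Where you genuinely diverge is in which intersection you kill and how. The paper proves the stronger statement $Q_j\cap G_{(T_j)}=\{1\}$ in one shot: for a nontrivial $h\in Q_j$ in the normal form of Lemma \ref{generation} (ii), it isolates a factor $h_j(s_j,\dots,s_{j+n};\omega')$ of maximal length and exhibits a single test vertex $v(\iota_j,s_j,\dots,s_{j+n},\rho)$ whose last coordinate that factor changes, while Lemma \ref{action} (v) shows the same-length factors fix the vertex and Lemma \ref{action} (i) shows the shorter factors disturb only earlier coordinates. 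You prove only the weaker $Q_l\cap G_{(T)}=\{1\}$ (which, combined with $Q_j\subseteq K_{j+1\pmod 2}$, still forces $Q_j\cap K_j=\{1\}$) by an induction on depth that mirrors the wreath-product recursion (\ref{recursive}): peel off the top-level $g_l(\sigma'_l)$ via the star of $v(\iota_l)$, split the rest into commuting blocks supported on disjoint branches, and recurse. The paper's route is shorter and avoids the re-normalization bookkeeping you rightly flag as the delicate point; your route is more structural and pays off in part (ii), where the paper simply asserts $Q_0\cap Q_1=\{1\}$ ``by the definition of the group'' while you derive it from the same faithfulness statement. Both arguments are sound.
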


\begin{proof}
(i) It follows by Lemma \ref{action} (iii) and (iv) that $Q_j < K_{j+1 \pmod2}$ and by Lemma \ref{K0K1} that $K_{j+1 \pmod2} \vartriangleleft H$. 
Therefore 
$$Q_j < K_{j+1 \pmod2} \vartriangleleft H.$$
Since $Q_0$ and $Q_1$ commute and together generate $H$, it is enough to show that if $h \in Q_j$, then $h \notin K_j$. 
\par
For $\sigma'_j \in \Gamma'_j \setminus \{ 1 \}$, let $\rho, \kappa \in I'_j$ be such that $\rho \not= \kappa$ and $\sigma'_j(\rho) = \kappa$. Then, 
$g_j(\sigma'_j) v(\iota_j, \rho) = v(\iota_j, \kappa)$ by Lemma \ref{action} (i). Therefore, $g_j(\sigma'_j) \notin G_{(T_j)} = K_j$. 
\par
Take an element $h \in Q_j$, and assume $h \not= g_j(\sigma'_j)$. Then, by Lemma \ref{generation} (ii)  and (R2), it can be written as 
$$h = g_j(\sigma'_j) \cdot h_1 \cdot h_2 \cdot h_j(s_j, \dots, s_{j+n}; \omega'),$$
where 
$$h_1 = \prod_{k=1}^{m} h_j(i_j^k, \dots, i_{j+n_k}^k; \theta'_k)  \text{  and  }  h_2 = \prod_{l=m+1}^{r} h_j(i_j^l, \dots, i_{j+n}^l; \xi'_l).$$ 
In the above expression, 
$r \geq m \geq 0, \ \sigma'_j \in \Gamma'_j,\ \theta'_k \in \Gamma'_{j+n_k+1 \pmod2},\ \omega', \xi'_l \in \Gamma'_{j+n+1 \pmod2},\ 
0 \leq n_1 \leq \dots \leq n_m < n, \ i_t^p \in I'_{t \pmod2}$, and 
$(i_j^l, \dots, i_{j+n}^l) \not= (s_j, \dots, s_{j+n}), \ \forall l \in \{ m+1, \dots, r \}$.  
\par 
Since $\omega' \in \Gamma'_{j+n+1 \pmod2}$ is nontrivial, there exist $\rho, \kappa \in I'_{j+n+1 \pmod2}$, with 
$\rho \not= \kappa$, satisfying $\omega'(\rho) = \kappa$. It follows from Lemma \ref{action} (i) that 
$$h_j(s_j, \dots, s_{j+n}; \omega') v(\iota_j, s_j, \dots, s_{j+n}, \rho) = v(\iota_j, s_j, \dots, s_{j+n}, \kappa).$$
It follows from Lemma \ref{action} (v) that 
$$h_2 v(\iota_j, s_j, \dots, s_{j+n}, \kappa) = v(\iota_j, s_j, \dots, s_{j+n}, \kappa).$$ 
It follows again from Lemma \ref{action} (i) that 
$$g_j(\sigma'_j) h_1 v(\iota_j, s_j, \dots, s_{j+n}, \kappa) = v(\iota_j, p_j \dots, p_{j+n}, \kappa)$$
for some $p_t \in I'_t$. 
\par
We conclude that 
$$h v(\iota_j, s_j, \dots, s_{j+n}, \rho) = v(\iota_j, p_j \dots, p_{j+n}, \kappa) \ \not= \ v(\iota_j, s_j, \dots, s_{j+n}, \rho),$$
and therefore $h \notin G_{(T_j)} = K_j$. \\ 
(ii) We have by the definition of the group $G = G[I_0,I_1;\iota_0,\iota_1;\Gamma_0,\Gamma_1]$ that $Q_1 \cap Q_0 = \{ 1 \}$. Therefore, 
$\ker G = K_0 \cap K_1 = Q_1 \cap Q_0 = \{ 1 \}$.
\end{proof}

We turn to the structure of the groups $K_j = Q_{j+1 \pmod 2}$ for $j = 0,1$. From the construction of 
$G = G[I_0,I_1;\iota_0,\iota_1;\Gamma_0,\Gamma_1]$, it is clear that the following holds:
\begin{multline*}
{\mathcal K}'(j,n) = \\ 
\langle \  \{ \  h_j( i_j, i_{j+1}, \dots, i_{j+n} ; \sigma_{j+n+1}) \ 
| \ n \in \mathbb{Z}_0, \ i_k \in I'_{k \pmod 2},\ \sigma_{j+n+1} \in \Gamma'_{j+n+1\pmod2} \ \} \ \rangle.
\end{multline*}
Therefore, there is a representation 
\begin{equation} \label{recursiveK} 
Q_{j+1\pmod2} \cong {\mathcal K}'(j,1) \rtimes \Gamma'_j,
\end{equation}
with the obvious action. It can be written also "recursively" as the wreath product 
\begin{equation} \label{recursive} 
Q_{j+1\pmod2} \cong Q_j \wr_{I_j'} \Gamma'_j,
\end{equation}
where $\Gamma_j'$ acts on $\#(I'_j)$ coppies of $K_{j+1\pmod2} \cong Q_j$ in the obvious way. 
\par 
We want to express $Q_j$ as a direct limit of wreath products. Let's denote 
$$Q_j(0) \ = \ \{ \ g_j(\sigma'_j) \ | \ \sigma'_j \in \Gamma'_j \ \} \ \cong \ \Gamma'_j.$$
For $n \geq 1$, we denote 
\begin{multline*} 
Q_j(n) \ = \ \langle \ \{ \  h_j( i_j, i_{j+1}, \dots, i_{j+n} ; \sigma_{j+n+1}) \ 
|  \ i_k \in I'_{k \pmod 2},\ \sigma_{j+n+1} \in \Gamma'_{j+n+1\pmod2} \ \}  \ \rangle.
\end{multline*}
Note that, $Q_j(n)$ is isomorphic to a direct sum of copies of $\Gamma'_{j+n+1 \pmod2}$. \\ 
Let's also denote 
$$Q_j[n] \ = \ \langle \ Q_j(0) \cup Q_j(1) \cup \dots \cup Q_j(n) \ \rangle.$$
Then, it is easy to see by the construction that $Q_j(1)$ is isomorphic to the direct sum of $\#(I'_j)$ coppies 
of $\Gamma'_{j+1 \pmod2}$ and that $Q_j[1] \cong \Gamma'_{j+1 \pmod2} \wr_{I'_j} \Gamma'_j$. 
Relation (R3) immediately implies that $Q_j(n) \vartriangleleft Q_j[n]$ and that there is an extension 
\begin{equation} \label{ExtQj}
\{ 1 \} \longrightarrow Q_j(n) \longrightarrow Q_j[n] \longrightarrow Q_j[n-1] \longrightarrow \{ 1 \}.
\end{equation}
We can actually write  	
\begin{multline*} 
Q_j[n] \ \cong  \\ 
\Gamma'_{j+n \pmod2} \wr_{I'_{j+n-1 \pmod2}} \Gamma'_{j+n-1 \pmod2} \wr_{I'_{j+n-2 \pmod2}} \dots  
\wr_{I'_{j+1 \pmod2}} \Gamma'_{j+1 \pmod2} \wr_{I'_j} \Gamma'_j.
\end{multline*}
Since we have the natural embeddings $Q_j[m] \hookrightarrow Q_j[n]$ for $0 \leq m \leq n$, then it is clear that 
$Q_j$ is the direct limit group of the groups $Q_j[n]$, i.e., 
\begin{equation} \label{Qjlim}
Q_j \ = \ \underset{\underset{n}{\longrightarrow}}{\lim} \ Q_j[n].
\end{equation}
Now, we observe that
\begin{lem} \label{Qjamenable}
$Q_0$ is amenable if and only if $Q_1$ is amenable, if and only if both groups $\Gamma'_0$ and $\Gamma'_1$ are amenable. 
\end{lem}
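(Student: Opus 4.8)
The plan is to read off the conclusion from the structural description of $Q_j$ established just above, namely the iterated wreath product decomposition $Q_j[n] \cong \Gamma'_{j+n} \wr_{I'_{j+n-1}} \cdots \wr_{I'_{j+1}} \Gamma'_{j+1} \wr_{I'_j} \Gamma'_j$ (indices mod $2$) together with the identity $Q_j = \lim_{\to} Q_j[n]$ from $(\ref{Qjlim})$, and to feed these into the standard permanence properties of amenability: it passes to subgroups and quotients, is stable under group extensions, and is preserved by directed unions.

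For the implication ``$\Gamma'_0$ and $\Gamma'_1$ amenable $\Rightarrow$ $Q_0$ and $Q_1$ amenable'', fix $j\in\{0,1\}$ and argue by induction on $n$ that $Q_j[n]$ is amenable. The base case is $Q_j[0] = Q_j(0) \cong \Gamma'_j$, amenable by hypothesis. For the step, recall a restricted wreath product $A \wr_X B = A^{(X)} \rtimes B$ is amenable whenever $A$ and $B$ are, since $A^{(X)}$ is the directed union of the finite products $A^F$ ($F \subset X$ finite), each amenable, and then $A^{(X)} \rtimes B$ is an extension of an amenable group by an amenable group; applying this to $Q_j[n] \cong \Gamma'_{j+n} \wr_{I'_{j+n-1}} Q_{j+1-j}\text{-stage}$, or more directly to the extension $(\ref{ExtQj})$ $\{1\}\to Q_j(n)\to Q_j[n]\to Q_j[n-1]\to\{1\}$ (whose kernel $Q_j(n)$ is a direct sum of copies of $\Gamma'_{j+n+1\pmod 2}$, hence amenable, and whose quotient is amenable by induction), we conclude $Q_j[n]$ is amenable. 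Since $Q_j$ is the directed union of the subgroups $Q_j[n]$, it is amenable.

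For the converse, assume $Q_0$ is amenable (the case of $Q_1$ being symmetric). Using $(\ref{recursive})$, $Q_1 \cong Q_0 \wr_{I'_0}\Gamma'_0$ and $Q_0 \cong Q_1 \wr_{I'_1}\Gamma'_1$, so each of $Q_0,Q_1$ embeds in the other as the base subgroup (equivalently, invoke Lemma \ref{K(...)} with $n=0$ together with Proposition \ref{KjQj}(i), which gives $K_0 = Q_1$, $K_1 = Q_0$, and $K(g)\cong K_j$ inside $K_{j+1\pmod2}$); hence $Q_0$ amenable $\iff$ $Q_1$ amenable. Moreover $\Gamma'_j$ is a quotient of $Q_{j+1\pmod2}\cong Q_j\wr_{I'_j}\Gamma'_j$ (kill the base group), equivalently $\Gamma'_j\cong Q_j(0)\leq Q_j$, so amenability of $Q_j$ forces amenability of $\Gamma'_j$. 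Chaining: $Q_0$ amenable $\Rightarrow$ $Q_1$ amenable $\Rightarrow$ $\Gamma'_1$ amenable, and $Q_0$ amenable $\Rightarrow$ $\Gamma'_0$ amenable; thus both $\Gamma'_0$ and $\Gamma'_1$ are amenable, completing the cycle of equivalences.

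I do not anticipate a genuine obstacle, since every step is a routine invocation of a permanence property of amenability; the only point requiring a little care is the forward (``$\Leftarrow$'') direction, where one must pass through the finite stages $Q_j[n]$ and take the direct limit, because the recursion $Q_{j+1}\cong Q_j\wr_{I'_j}\Gamma'_j$ has no base case on its own. It is also worth noting explicitly that $I'_j\neq\emptyset$ in the standing setup, so the wreath products are genuine and the quotient maps onto $\Gamma'_j$ are well defined.
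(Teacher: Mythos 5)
Your proposal is correct and follows essentially the same route as the paper: the forward direction is the paper's induction on the extensions (\ref{ExtQj}) (with amenable kernel $Q_j(n)$ and inductively amenable quotient $Q_j[n-1]$) followed by the direct limit (\ref{Qjlim}), and your converse is just the contrapositive of the paper's use of the recursion (\ref{recursive}) to propagate non-amenability of $\Gamma'_j$ into both $Q_0$ and $Q_1$. The only differences are cosmetic: you make explicit the mutual embeddings $Q_0 \hookrightarrow Q_1 \hookrightarrow Q_0$ and the identification $\Gamma'_j \cong Q_j(0) \leq Q_j$, which the paper leaves implicit.
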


\begin{proof}
If we suppose that, say, $\Gamma'_0$ is not amenable, then, by equation (\ref{recursive}) for $j=0$, it will follow that $Q_0$ is not amenable, and, 
by equation (\ref{recursive}) for $j = 1$, it will follow that $Q_1$ is not amenable. 
\par
Conversely, suppose that both $\Gamma'_0$ and $\Gamma'_1$ are amenable. Let's note that $Q_j(n)$ is amenable being isomorphic to a direct sum of 
copies of the group $\Gamma'_{j+n+1 \pmod2}$. Also, $Q_j[0] = Q_j(0) \cong \Gamma'_j$ is amenable, and, by equation (\ref{ExtQj}) for 
$n=1$, it follows that $Q_j[1]$ is also amenable. 
\par
Equation (\ref{ExtQj}) and an easy induction establish the amenability of $Q_j[n]$ for each $n \in \mathbb{Z}_0$. 
\par
Finally, we see from the direct limit representation of $Q_j$ (equation (\ref{Qjlim})) that $Q_j$ is amenable. 
\end{proof} 

\subsection{Group-Theoretic Structure} 

First, we need an easy Lemma re-expressing our conditions. 

\begin{lem} \label{expression}
Let $G$ be a group, and let $H$ be its subgroup. Assume that $G = \langle g H g^{-1} \ | \ g \in G \rangle$. Then: \\ 
\text{(i)} $G \ = \ \langle H \ \cup \ [G, G] \rangle$. \\ 
\text{(ii)} $[G, G] \ = \ \langle g h g^{-1} h^{-1} \ | \ g \in G,\ h \in \underset{a \in G}{\cup} a H a^{-1} \rangle$.
\end{lem}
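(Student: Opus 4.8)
The hypothesis is that $G$ is generated by the conjugates $gHg^{-1}$, $g\in G$. For part (i), the plan is to show the two inclusions. The inclusion $\langle H\cup[G,G]\rangle\subseteq G$ is trivial. For the reverse inclusion it suffices, by the hypothesis $G=\langle gHg^{-1}\mid g\in G\rangle$, to show that every conjugate $gHg^{-1}$ lies in $\langle H\cup[G,G]\rangle$. Given $h\in H$, write $ghg^{-1}=(ghg^{-1}h^{-1})\,h$, i.e.\ $ghg^{-1}=[g,h]\cdot h$. Since $[g,h]\in[G,G]$ and $h\in H$, this exhibits $ghg^{-1}\in\langle H\cup[G,G]\rangle$. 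As $g$ and $h$ were arbitrary, the set of generators $gHg^{-1}$ is contained in $\langle H\cup[G,G]\rangle$, hence so is $G$.

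For part (ii), again one inclusion is clear: every commutator $ghg^{-1}h^{-1}$ with $h\in\bigcup_{a\in G}aHa^{-1}$ is, in particular, a commutator of elements of $G$, so the right-hand side is contained in $[G,G]$. For the reverse inclusion I would use the standard fact that, once $G=\langle S\rangle$ for a generating set $S$, the commutator subgroup $[G,G]$ is generated by the conjugates of the ``basic'' commutators $[s,t]$, $s,t\in S$; more precisely $[G,G]=\langle\, x[s,t]x^{-1}\mid x\in G,\ s,t\in S\,\rangle$. Here we take $S=\bigcup_{a\in G}aHa^{-1}$, which is a generating set by hypothesis and is closed under conjugation by elements of $G$. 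Closure under conjugation is the point: for $x\in G$ and $s,t\in S$, we have $x[s,t]x^{-1}=[xsx^{-1},xtx^{-1}]$, and $xsx^{-1},xtx^{-1}\in S$ since $S$ is conjugation-invariant. Thus every generator of $[G,G]$ is already of the form $[s',t']$ with $s',t'\in S$, which after relabelling ($g:=s'$, which lies in $S\subseteq\bigcup_a aHa^{-1}$, hence in particular in $G$; $h:=t'\in\bigcup_a aHa^{-1}$) is of the asserted form $ghg^{-1}h^{-1}$. Hence $[G,G]\subseteq\langle ghg^{-1}h^{-1}\mid g\in G,\ h\in\bigcup_{a\in G}aHa^{-1}\rangle$.

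The only step requiring a little care — and the one I would regard as the main (modest) obstacle — is justifying the generating set for $[G,G]$ in terms of a chosen generating set $S$ of $G$: namely that $[G,G]$ is normally generated (indeed generated, once $S$ is conjugation-stable) by $\{[s,t]:s,t\in S\}$. This is classical, but for completeness I would include the short argument: the subgroup $N=\langle x[s,t]x^{-1}\mid x\in G,\,s,t\in S\rangle$ is normal in $G$ by construction, it is contained in $[G,G]$, and $G/N$ is abelian because the images of all generators $s\in S$ commute pairwise (as $[s,t]\in N$); hence $[G,G]\subseteq N$, giving equality. Combining this with the conjugation-invariance of $S=\bigcup_{a\in G}aHa^{-1}$ collapses the conjugates $x[s,t]x^{-1}$ to plain commutators $[s',t']$ of elements of $S$, which is exactly the claimed form. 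No deeper input is needed, and in particular the amalgam structure of the ambient examples plays no role here — this lemma is purely a general group-theoretic reformulation of the running hypothesis $G=\langle gHg^{-1}\mid g\in G\rangle$.
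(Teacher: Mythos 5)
Your argument is correct, and part (i) coincides with the paper's proof: both rest on the identity $aha^{-1}=(aha^{-1}h^{-1})h$ applied to each factor of a word in the conjugates of $H$. For part (ii) you take a genuinely different route. The paper argues by induction on the length of $g'$ as a word in $S=\bigcup_{a\in G}aHa^{-1}$, using the commutator identity $[g,h_1h']=[g,h_1]\cdot\bigl[h_1gh_1^{-1},\,h_1h'h_1^{-1}\bigr]$ together with the conjugation-invariance of $S$ to peel off one generator at a time; this directly exhibits an arbitrary commutator $[g,g']$ as a product of generators of the right-hand side. You instead invoke the classical fact that for any generating set $S$ one has $[G,G]=\langle x[s,t]x^{-1}\mid x\in G,\ s,t\in S\rangle$ (proved via the abelianization $G/N$), and then use the conjugation-invariance of $S$ to absorb the outer conjugation into the entries of the commutator, via $x[s,t]x^{-1}=[xsx^{-1},xtx^{-1}]$. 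Both proofs are elementary and hinge on the same two facts --- that $S$ generates $G$ (the running hypothesis) and that $S$ is stable under conjugation; yours is shorter and yields the slightly sharper conclusion that $[G,G]$ is generated by commutators $[s,t]$ with \emph{both} entries in $S$, whereas the paper's is more explicit and constructive and needs no auxiliary normal subgroup or quotient.
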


\begin{proof}
(i) Every element $g = a_1 h_1 a_1^{-1} a_2 h_2 a_2^{-1} a_3 h_3 a_3^{-1} \cdots a_n h_n a_n^{-1}$ can be written as 
$$g = (a_1 h_1 a_1^{-1} h_1^{-1}) h_1 (a_2 h_2 a_2^{-1} h_2^{-1}) h_2 \cdots (a_n h_n a_n^{-1} h_n^{-1}) h_n.$$ 
(ii) Take $g \in G$ and $g' = h_1 \dots h_n$, where $h_i \in \underset{a \in G}{\cup} a H a^{-1}$. Then $g' = h_1 \cdot h'$, where 
$h' = h_2 \cdots h_n$ is a product of $n-1$ members of $\underset{a \in G}{\cup} a H a^{-1}$. Then, 
\begin{multline*} 
g g' g^{-1} (g')^{-1} = g h_1 h' g^{-1} (h')^{-1} h_1^{-1} = (g h_1 g^{-1} h_1^{-1}) h_1 (g h' g^{-1} (h')^{-1}) h_1^{-1} = \\ 
(g h_1 g^{-1} h_1^{-1}) \cdot (h_1 g h_1^{-1}) (h_1 h' h_1^{-1}) (h_1 g h_1^{-1})^{-1} (h_1 h' h_1^{-1})^{-1}
\end{multline*}
In the last expression, $h_1 h' h_1^{-1}$ is a product of $n-1$ members of $\underset{a \in G}{\cup} a H a^{-1}$. A simple induction 
completes the proof.
\end{proof} 

\begin{thm} \label{amalgamstructure} 
Take a group $G = G[I_0,I_1;\iota_0,\iota_1;\Gamma_0,\Gamma_1]$. Assume that: \\ 
\text{(i)} $\Gamma_0$ and $\Gamma_1$ are $2$-transitive, that is, all stabilizers 
$(\Gamma_k)_{i_k}$ are transitive on the sets $I_k \setminus \{ i_k \}$ for all $i_k \in I_k,\ k=0,1$. \\ 
\text{(ii)} Either $\Gamma_k = \langle (\Gamma_k)_{i_k} \ | \ i_k \in I_k \rangle$, or $\Gamma_k = \text{Sym}(2)$ for $k = 0,1$.
\par
Then, there is a group extension 
$$1 \longrightarrow N \longrightarrow G \overset{\theta}{\longrightarrow} 
(\Gamma_0 / [\Gamma_0, \Gamma_0]) \times (\Gamma_1 / [\Gamma_1, \Gamma_1])
\longrightarrow 1,$$
where $N$ is a simple, normal subgroup of $G$ and where $\theta$ is defined on the generators by 
$$\theta(g_0(\sigma_0)) = ([\sigma_0]_0,1),\ \theta(g_1(\sigma_1)) = (1, [\sigma_1]_1), \text{ and}$$ 
$$\theta(h_j( i_j, i_{j+1}, \dots, i_{j+n} ; \sigma_{j+n+1})) = 
\left\{\begin{array}{cl}([\sigma_{j+n+1}]_0,1)&\text{if } j+n+1 \text{ is even} \\ 
(1, [\sigma_{j+n+1}]_1)&\text{if } j+n+1 \text{ is odd.}
\end{array}\right.$$
Here, $[\sigma]_k$ denotes the image of the permutation $\sigma \in \Gamma_k$ in $\Gamma_k / [\Gamma_k, \Gamma_k]$ for $k = 0,1$.
\end{thm}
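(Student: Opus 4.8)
The plan is to build $\theta$ from the universal property of the amalgam, to observe that $N=\ker\theta$ equals $[G,G]$ and acts suitably on the Bass--Serre tree $T$, and then to prove that $N$ is simple by a ``concentrate and spread'' argument on $T$. \textit{Construction of $\theta$.} Put $A=(\Gamma_0/[\Gamma_0,\Gamma_0])\times(\Gamma_1/[\Gamma_1,\Gamma_1])$. By Remark~\ref{amalgamuniversal} it suffices to produce homomorphisms $\alpha_j\colon G_j\to A$, $j=0,1$, with $\alpha_0|_H=\alpha_1|_H$, and to set $\theta|_{G_j}=\alpha_j$. Define $\alpha_j$ on generators by the formulas in the statement: $g_j(\sigma_j)\mapsto[\sigma_j]_j$ in the $j$-th coordinate, and $h_k(i_k,\dots,i_{k+m};\sigma)\mapsto[\sigma]$ placed in the coordinate whose index has the parity of $k+m+1$. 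Since $A$ is abelian the relations (R1)--(R5) are respected automatically, and (R6) is checked case by case against this parity bookkeeping: when $m=0$ and $\sigma_j(i_j)=\iota_j$ both sides equal $[\sigma']$ in the parity-$(j+1)$ coordinate; when $m\ge1$ and $\sigma_j(i_j)=\iota_j$ the deletion of the leading index is compensated by passing from $j$ to $j+1$; and when $\sigma_j(i_j)\ne\iota_j$ only an index is relabelled. Because the formula on the $h$-generators does not see the ambient $G_j$, one has $\alpha_0|_H=\alpha_1|_H$, so $\theta$ exists; it is onto because $g_0(\Gamma_0)$ and $g_1(\Gamma_1)$ already surject onto the two coordinates.

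\textit{The kernel $N=\ker\theta$.} Then $N\triangleleft G$ with $G/N\cong A$, which is the required extension. Abelianizing the presentation (R1)--(R6) and using (R6) repeatedly to identify, in $G^{\mathrm{ab}}$, the image of each $h_j(i_j,\dots,i_{j+m};\sigma)$ with that of $g_{j+m+1\pmod 2}(\sigma)$ shows that $\theta$ is in fact the abelianization map, so $N=[G,G]$. In particular $N\neq\{1\}$, since $G$ is nonabelian (by~\eqref{recursive}, $H$ already contains a nontrivial wreath product). Moreover, $N$ acts faithfully on $T$, the kernel of the $G$-action being $\ker G=\{1\}$ by Proposition~\ref{KjQj}(ii); and since $N$ is a nontrivial normal subgroup of $G$ while the $G$-action on $T$ is minimal and of general type (the amalgam being nondegenerate by Remark~\ref{nondegenerate}), the minimal $N$-invariant subtree --- resp.\ the set of $N$-fixed ends --- is $G$-invariant, which forces the $N$-action on $T$ to be again minimal and of general type. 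Finally, using hypothesis (ii) one verifies that $N\cap G_v$ realizes the full local action $\Gamma_j$ at every type-$j$ vertex $v$, hence --- by (i) --- $N$ acts locally $2$-transitively, and that $N$ acts transitively on the edges of $T$; here, if $\Gamma_j=\mathrm{Sym}(2)$, the corresponding vertices have degree $2$ and are first collapsed.

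\textit{Simplicity of $N$.} This is the crux. Let $1\neq M\triangleleft N$. First, since the $N$-action on $T$ is of general type, a standard ping-pong argument on $T$ --- playing an element of $M$ against a suitable $N$-conjugate of it --- produces a nontrivial $m\in M$ fixing a half-tree of $T$ pointwise, i.e.\ $m\in N\cap gK_jg^{-1}$ for some $g\in G$ and some $j$, where $K_j=Q_{j+1\pmod 2}$ by Proposition~\ref{KjQj}(i). Second, the commutation relations (R1) and (R2) provide the independence of the branches at a vertex, and the $2$-transitivity of the local actions at the vertices bordering the subtree fixed by $m$ lets one conjugate and commute $m$ inside $N$ so as to \emph{concentrate} it onto a single branch: out of $M$ one extracts a nontrivial element which, after a translation, lies in $Q_j$ and --- up to the abelianization correction --- is a ``level-$n$'' element $h_j(i_j,\dots,i_{j+n};\sigma)\,h_j(i'_j,\dots,i'_{j+n};\sigma)^{-1}$, which does lie in $N$. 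Third, one \emph{spreads}: edge-transitivity of $N$ and $2$-transitivity of the $\Gamma_k$ carry such an element onto every branch, and (R3), (R4), (R6) assemble these into all of the subgroups $[Q_j,Q_j]$ and all such level-$n$ elements; by Lemma~\ref{generation} these generate $[G,G]$ --- the tool being Lemma~\ref{expression}, applied to $G$ itself when both $\Gamma_k$ are generated by their point stabilizers (so that $G=\langle gHg^{-1}\mid g\in G\rangle$), and to the index-$2$ subgroup $\langle gHg^{-1}\mid g\in G\rangle$ when one $\Gamma_k=\mathrm{Sym}(2)$; this is exactly where hypothesis (ii) enters. Hence $M$ contains a generating set of $[G,G]=N$, so $M=N$ and $N$ is simple.

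The step I expect to be the genuine obstacle is the middle one above: converting one nontrivial element of a normal subgroup into a localized element supported on a single branch, and then recovering a generating set of $N$ from it. This is precisely where hypotheses (i) and (ii) are essential --- $2$-transitivity of the $\Gamma_k$ prevents $M$ from only ``seeing'' a proper invariant subgroup of a local permutation group, while (ii) is what makes Lemma~\ref{expression} applicable (after passing to an index-$2$ subgroup if necessary), so that the localized elements produced really do generate $[G,G]$. By contrast, the well-definedness and surjectivity of $\theta$, the identity $N=[G,G]$, and the minimality, general type and faithfulness of the $N$-action are comparatively routine given the explicit relations (R1)--(R6), Lemma~\ref{action} and Proposition~\ref{KjQj}.
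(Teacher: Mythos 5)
Your construction of $\theta$ via Remark \ref{amalgamuniversal} and your identification $N=\ker\theta=[G,G]$ are correct and match the paper's first step (the paper does not state $N=[G,G]$ explicitly, but your derivation of it from Lemma \ref{generation}~(i) and (R5) is sound). The problem is the simplicity argument, which is the entire content of the theorem: what you give is a strategy, not a proof, and the two steps you yourself flag as "the genuine obstacle" are exactly the ones that carry all the difficulty. First, the "concentrate" step --- turning an arbitrary $1\neq a\in M\triangleleft N$ into a localized element of the form $h_j(x_j,\dots,x_{j+n};\sigma)\,h_j(x'_j,\dots,x'_{j+n};\sigma^{-1})$ --- is asserted via "a standard ping-pong argument" plus "independence of branches," but the standard Tits/Le~Boudec machinery only yields $M\supseteq[N_{(Y)},N_{(Y)}]$ for half-trees $Y$; converting that into the specific localized generators, and doing so inside $N$ rather than inside $G$, is precisely the sequence of explicit commutator manipulations with long $h$-elements that occupies the first half of the paper's proof, and you do not supply it. Second, and more seriously, the "spread" step asserts that the localized elements generate all of $N$. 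The generating set of $N$ (display (\ref{Ngeneration}) in the paper) contains, besides the level-$n$ pairs you describe, the elements $g_k(\sigma_k)$ with $\sigma_k\in[\Gamma_k,\Gamma_k]$ --- permutations that need not fix $\iota_k$ and hence do not live in any conjugate of $H$, so they are not reached by "assembling branches." Obtaining them is where the paper must run the iterated conjugation scheme with $p_k,q_k$ and Lemma \ref{expression}~(ii), writing $\gamma_k^{-1}\tau_k\gamma_k$ as a product $a_k^1b_k^1\cdots a_k^nb_k^n$ and peeling off one factor at a time; your appeal to "Lemma \ref{expression}, applied to $G$ itself" does not substitute for this, because Lemma \ref{expression} concerns $[G,G]$ as generated by commutators $ghg^{-1}h^{-1}$ with $h$ in conjugates of $H$, and one still has to show those commutators lie in $\langle\langle a\rangle\rangle_N$.

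Two of your auxiliary claims are also false as stated and would need repair. The local action of $N\cap G_{v}$ at a type-$k$ vertex is $[\Gamma_k,\Gamma_k]\cdot\Gamma'_k$, which equals $\Gamma_k$ under hypothesis (ii) only in the non-$\mathrm{Sym}(2)$ case; when $\Gamma_k=\mathrm{Sym}(2)$ it is trivial, and likewise $NH\neq G$ there, so $N$ is not edge-transitive. Your proposed fix (collapsing degree-$2$ vertices) changes the tree, the edge groups, and the independence property being invoked, and none of that is carried out. So while your outline points in the same direction as the paper's proof (take $1\neq a$, localize, spread, and recover the generating set (\ref{Ngeneration})), the proof of simplicity is not actually present.
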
 

\begin{proof} 
First, we can define homomorphisms $\theta_0 : G_0 \to \Gamma_0 / [\Gamma_0, \Gamma_0]$ and 
$\theta_1 : G_1 \to \Gamma_1 / [\Gamma_1, \Gamma_1]$ given on the generators by 
$$\theta_0(g_0(\sigma_0)) = ([\sigma_0]_0,1),\ \theta_1(g_1(\sigma_1)) = (1, [\sigma_1]_1), \text{ and}$$ 
$$\theta_k(h_j( i_j, i_{j+1}, \dots, i_{j+n} ; \sigma_{j+n+1})) = 
\left\{\begin{array}{cl}([\sigma_{j+n+1}]_0,1)&\text{if } j+n+1 \text{ is even} \\ 
(1, [\sigma_{j+n+1}]_1)&\text{if } j+n+1 \text{ is odd,}
\end{array}\right.$$
where $k = 0,1$. This is possible since the respective commutators are in the kernels. Next, observe that $\theta_0 |_H = \theta_1 |_H$, and due to the 
universal property of the group amalgamations (Remark \ref{amalgamuniversal}), $\theta$ can be defined.
\par
Observe that, according to the definition of $\theta$, for $g \in G$, we have \\ 
$$ \theta(g) = \left( \prod \{ [\sigma]_0 \ | \ \sigma \in \Gamma_0 \text{ is presented in } g \},\ 
\prod \{ [\tau]_1 \ | \ \tau \in \Gamma_1 \text{ is presented in } g \} \right). $$
Therefore, by Lemma \ref{generation} easily follows that $N$ is generated by the set 
\begin{multline*} 
\{ g_j(\sigma_j) \ | \ \sigma_j \in \Gamma_j, \ [\sigma_j]_j = 1, \ j = 0,1 \} \ \cup \\ 
\{ g_j(\sigma_j) h_k(i_k, \dots, i_{2l+j-1}; \tau_{j+2l}) \ | 
\ l \in \mathbb{N}_0,\ \sigma_j, \tau_{j+2l} \in \Gamma'_j,\  [\sigma_j \tau_{j+2l}]_j = 1,\ i_m \in I'_{m \pmod 2}, \ j,k = 0,1 \} \ 
\cup \\ \{ h_j(i_j, \dots, i_{2t+j-1}; \sigma_{j+2t}) h_k(s_k, \dots, s_{2l+j-1}; \tau_{j+2l}) \ | \\ 
\ [\sigma_{j+2t} \tau_{j+2l}]_j = 1,\ t,l \in \mathbb{N}_0,\ j,k = 0,1,\ \sigma_{j+2t}, \tau_{j+2l}  \in \Gamma'_j, 
i_m, s_m \in I'_{m \pmod2} \}. 
\end{multline*}
Consequently, it is easy to see that the following set generates $N$
\begin{multline} \label{Ngeneration} 
\{ g_k(\sigma_k) \ | \ \sigma_k \in \Gamma_k, \ [\sigma_k]_k = 1, \ k=0,1 \} \ \cup \\ 
\{ g_k(\sigma_k) h_{k+1 \pmod 2}(i_{k+1 \pmod 2}; \sigma_k^{-1}) \ | \ \sigma_k \in \Gamma'_k, \ k=0,1 \} \ \cup \\ 
\{ h_0(i_0, \dots, i_n; \sigma_{n+1}) h_1(s_1, \dots, s_n; \sigma_{n+1}^{-1}) \ | \ n \in \mathbb{N}, \ \ i_k, s_k \in I'_{k \pmod2}, 
\sigma_{n+1} \in \Gamma'_{n+1 \pmod 2} \} \ \cup \\  
\{ h_0(i_0, \dots, i_{n-1} ; \sigma_n) h_1(s_1, \dots, s_{n+1} ; \sigma_n^{-1}) \ | \ n \in \mathbb{N}, \ i_k, s_k \in I'_{k \pmod2}, \ 
\sigma_n \in \Gamma'_{n \pmod 2} \}. 
\end{multline}

Now, let $a \in N \setminus \{ 1 \}$ be arbitrary. We need to show that the normal closure, 
$\langle \langle \ a \ \rangle \rangle_N$, of $a$ in $N$ coincides with $N$. 
Relation (R3) shows that shorter $h$'s modify longer ones, so this observation, together with relation (R6) show that, for an element 
$h_0(i_0, \dots, i_m; \sigma_{m+1})$ with a large enough length $m \in \mathbb{N}$ and appropriate $i_k$'s, we have 
$$a h_0(i_0, \dots, i_m; \sigma_{m+1}) = h_j(s_j, \dots, s_{2n+m}; \sigma_{m+1}) a$$ 
for some $j \in \{ 0,1 \}$, some $n \in \mathbb{Z}$, and some $s_k$'s. Therefore, after observing that, for appropriate $i_k$'s, 
$h_0(i_0, \dots, i_m; \sigma_{m+1})$ and $h_j(s_j, \dots, s_{2n+m}; \sigma_{m+1})$ commute (and are different), it is easy to see that, 
for this choice of $i_k$'s, we obtain the following element of $\langle \langle \ a \ \rangle \rangle_N$ (let's call it $a'$) 
\begin{multline*} 
h_0(i_0, \dots, i_m; \sigma_{m+1}) h_j(s_j, \dots, s_{2n+m}; \sigma_{m+1}^{-1}) a h_j(s_j, \dots, s_{2n+m}; \sigma_{m+1}) 
 h_0(i_0, \dots, i_m; \sigma_{m+1}^{-1}) a^{-1} = \\ 
h_0(i_0, \dots, i_m; \sigma_{m+1}) h_j(s_j, \dots, s_{2n+m}; \sigma_{m+1}^{-1}) 
h_l(t_l, \dots, t_{2p+m}; \sigma_{m+1}^{-1}) h_j(s_j, \dots, s_{2n+m}; \sigma_{m+1}^{-1}),
\end{multline*}
for some $l \in \{ 0,1 \}$, some $p \in \mathbb{Z}$, and some $t_k$'s. Next, take $n > m$, an element $h_j(x_j, \dots, x_{j+n}; \sigma)$ that doesn't 
commute with $a'$, and a $h_k(y_k, \dots, y_{j+n+2w}; \sigma^{-1})$ that does commute with $a'$. Then, we obtain the following element of 
$\langle \langle \ a \ \rangle \rangle_N$
\begin{multline*} 
h_j(x_j, \dots, x_{j+n}; \sigma) h_k(y_k, \dots, y_{j+n+2w}; \sigma^{-1}) a' 
h_k(y_k, \dots, y_{j+n+2w}; \sigma) h_j(x_j, \dots, x_{j+n}; \sigma^{-1}) (a')^{-1} \\ 
= h_j(x_j, \dots, x_{j+n}; \sigma) h_j(x'_j, \dots, x'_{j+n}; \sigma^{-1}) \ \equiv \ a''. 
\end{multline*}
Then, by relation (R6) it is easy to see that we can find $\gamma \in G$, and after eventually multiplying $\gamma$ by an element of the form 
$g_0(\sigma'_0) g_1(\sigma'_1)$, we can have $\gamma \in N$, and, finally, obtain the following element of 
$\langle \langle \ a \ \rangle \rangle_N$:
$$\gamma a'' \gamma^{-1} = 
g_{j + n + 1 \pmod2}(\sigma) h_r(z_r, \dots, z_{2q + j + n}; \sigma^{-1})$$ 
for appropriate $r \in \{ 0,1 \},\ \eps \in \{ -1, 1 \}$, $z_k$'s, and arbitrary $\sigma \in \Gamma'_{j + n + 1 \pmod2}$. 
Since $n \in \mathbb{N}$ is also a large, arbitrary number, there are elements 
$$g_0(\sigma'_0) h_r(z_r, \dots, z_{2q-1}; (\sigma'_0)^{-1}),\ g_1(\sigma'_1) h_d(z'_d, \dots, z'_{2u}; (\sigma'_1)^{-1})\ \in \  
\langle \langle \ a \ \rangle \rangle_N,$$ 
where $\sigma'_0 \in \Gamma'_0$ and $\sigma'_1 \in \Gamma'_1$ are arbitrary. 
\par 
Take $\sigma_0 \in \Gamma_0 \setminus \Gamma_0'$ and consider 
$f = g_0(\sigma_0) h_r(z'_r, z_{r+1}, \dots, z_{2q-1}; (\sigma'_0)^{-1})$, where $z''_r = z_r$ if $r=1$ and 
$z''_r = \sigma_0^{-1} \circ \sigma_0'(z_r)$ if $r=0$. Then, 
$$b \equiv f^{-1} g_0(\sigma'_0) h_r(z_r, \dots, z_{2q-1}; (\sigma'_0)^{-1}) f = 
g_0(\sigma_0^{-1} \sigma'_0 \sigma_0) h_r(z''_r, z_{r+1}, \dots, z_{2q-1}; (\sigma'_0)^{-1}) \in \langle \langle \ a \ \rangle \rangle_N.$$
Let $n \geq 1$, and let $w_0 = \sigma_0^{-1} (\sigma'_0)^{-1} \sigma_0 (\iota_0) \in I_0'$. Choose $j_k$'s and $v_k$'s such that 
$h_0(w_0, j_1, \dots, j_n ; \tau_{n+1})$ and $h_0(\sigma_0^{-1}(\iota_0), v_1, \dots, v_{2m+n}; \tau_{n+1}^{-1})$ commute with 
$h_r(z'_r, z_{r+1}, \dots, z_{2q-1}; (\sigma'_0)^{-1})$ 
(by altering $\sigma_0$, $j$'s, $v$'s, or even the $\gamma$ above if needed). Observe that, 
$h_0(\sigma_0^{-1}(\iota_0), v_1, \dots, v_{2m+n}; \tau_{n+1}^{-1})$ commutes with $b$. Define 
$$c = h_0(w_0, j_1, \dots, j_n ; \tau_{n+1}) h_0(\sigma_0^{-1}(\iota_0), v_1, \dots, v_{2m+n}; \tau_{n+1}^{-1}) \ \in \ N.$$
Then, $d = c b c^{-1} b^{-1}= h_0(w_0, j_1, \dots, j_n ; \tau_{n+1}) h_1(j_1, \dots, j_n; \tau_{n+1}^{-1}) \ \in \ \langle \langle \ a \ \rangle \rangle_N.$
\par
In the above expression $\tau_{n+1} \in \Gamma_{n+1 \pmod 2}'$ is arbitrary. Relation (R3) can be used repeatedly on $d$ to infer that 
the third set of (\ref{Ngeneration}) belongs to $\langle \langle \ a \ \rangle \rangle_N$. 
\par 
The element $g_1(\sigma'_1) h_d(y_d, \dots, y_{2u}; (\sigma'_1)^{-1}) \in \langle \langle \ a \ \rangle \rangle_N$ can be used in analogous way to 
infer that the fourth set of (\ref{Ngeneration}) belongs to $\langle \langle \ a \ \rangle \rangle_N$. 
\par 
The second set of (\ref{Ngeneration}) is formed by products of the third and the fourth set of (\ref{Ngeneration}) with the elements 
$$g_0(\sigma'_0) h_r(z_r, \dots, z_{2q-1}; (\sigma'_0)^{-1}),\text{  and  } g_1(\sigma'_1) h_d(z'_d, \dots, z'_{2u}; (\sigma'_1)^{-1}),$$
so it also belongs to (\ref{Ngeneration}). 
\par 
Finally, we need to show that the first set of (\ref{Ngeneration}) is a subset of $\langle \langle \ a \ \rangle \rangle_N$. Let $k \in \{ 0,1 \}$. If 
$\Gamma_k = \text{Sym}(2)$, then $[\Gamma_k, \Gamma_k]$ is trivial, so there is nothing to prove. Let's assume that 
$\Gamma_k \not= \text{Sym}(2)$. In this situation, $I'_k$ has at least two elements. 
\par
Now, take arbitrary $\gamma_k, \tau_k \in \Gamma_k$ with representations (from Lemma \ref{expression} (i)) 
$$\gamma_k^{-1} \tau_k \gamma_k \ = \ a_k^1 b_k^1 \cdots a_k^n b_k^n, \text{ where } a_k^j \in [\Gamma_k, \Gamma_k],\ b_k^j \in \Gamma_k',$$ 
$$\gamma_k \ = \ c_k^1 d_k^1 \cdots c_k^m d_k^m, \text{ where } c_k^j \in [\Gamma_k, \Gamma_k],\ d_k^j \in \Gamma_k',$$
and an arbitrary $\sigma_k \in \Gamma_k'$. Consider 
$$p_k = h_k(i_k, i_{k+1 \pmod 2}, i_k, i_{k+1 \pmod 2}; b_k^n) g_k(a_k^n b_k^n) \ \in \ N \text{  and}$$ 
$$q_k = g_k(\sigma_k) h_k(i'_k, i_{k+1 \pmod 2}, i''_k, i_{k+1 \pmod 2}; (\sigma_k)^{-1}) \ \in \ \langle \langle \ a \ \rangle \rangle_N,$$
where $i_{k+1 \pmod 2} \in I'_{k+1 \pmod2}, \ i_k, i'_k, i''_k \in I'_k, \ 
i'''_k \equiv a_k^n b_k^n \sigma_k^{-1} (a_k^n b_k^n)^{-1}(i_k) \not= \iota_k,$ and where $a_k^n b_k^n(i'_k) \not= \iota_k$. Then, 
\begin{multline*} 
\langle \langle \ a \ \rangle \rangle_N \ \ni \ p_k q_k p_k^{-1} \ = \\ 
g_k(a_k^n b_k^n \sigma_k (a_k^n b_k^n)^{-1}) h_k(a_k^n b_k^n(i'_k), i_{k+1 \pmod 2}, i''_k, i_{k+1 \pmod 2}; (\sigma_k)^{-1}) \cdot \\ 
\cdot  h_k(i'''_k, i_{k+1 \pmod 2}, i_k, i_{k+1 \pmod 2}; b_k^n)  
h_k(i_k, i_{k+1 \pmod 2}, i_k, i_{k+1 \pmod 2}; (b_k^n)^{-1}). 
\end{multline*}
In the last expression, the product of the last two factors belongs to $\langle \langle \ a \ \rangle \rangle_N$, and therefore 
$$g_k(a_k^n b_k^n \sigma_k (a_k^n b_k^n)^{-1}) h_k(a_k^n b_k^n(i'_k), i_{k+1 \pmod 2}, i''_k, i_{k+1 \pmod 2}; (\sigma_k)^{-1}) \ 
\in \ \langle \langle \ a \ \rangle \rangle_N.$$
Next, we can play the same game with $p_k$ replaced by 
$$h_k(i_k, i_{k+1 \pmod 2}, i_k, i_{k+1 \pmod 2}; b_k^{n-1}) g_k(a_k^{n-1} b_k^{n-1})$$ 
and $q_k$ replaced by 
$$g_k(a_k^n b_k^n \sigma_k (a_k^n b_k^n)^{-1}) h_k(i'_k, i_{k+1 \pmod 2}, i''_k, i_{k+1 \pmod 2}; (\sigma_k)^{-1})$$ 
for appropriate $i$'s. After $n$ steps, we will get 
$$g_k(\gamma_k^{-1} \tau_k \gamma_k \sigma_k \gamma_k^{-1} \tau_k^{-1} \gamma_k) 
h_k(i'_k, i_{k+1 \pmod 2}, i''_k, i_{k+1 \pmod 2}; (\sigma_k)^{-1}) \ \in \  \langle \langle \ a \ \rangle \rangle_N.$$
Finally, after multiplying the last element on the right by 
$$h_k(i'_k, i_{k+1 \pmod 2}, i''_k, i_{k+1 \pmod 2}; \sigma_k) g_k(\sigma_k^{-1}) \ \in \  \langle \langle \ a \ \rangle \rangle_N,$$ 
we will arrive at 
$$g_k(\gamma_k^{-1} \tau_k \gamma_k \sigma_k \gamma_k^{-1} \tau_k^{-1} \gamma_k \sigma_k^{-1}) \ \in \ \langle \langle \ a \ \rangle \rangle_N.$$
This procedure can be applied again using $c_k^m d_k^m,\ \dots \ , c_k^1 d_k^1$, and conclude that 
$$g_k(\tau_k \cdot (\gamma_k \sigma_k \gamma_k^{-1}) \cdot \tau_k^{-1} \cdot (\gamma_k \sigma_k^{-1} \gamma_k^{-1} )) 
\ \in \ \langle \langle \ a \ \rangle \rangle_N.$$
Remembering that, in the last expression, $\tau_k$ and $\gamma_k$ were arbitrary elements of $\Gamma_k$ and $\sigma_k$ was an arbitrary 
element of $\Gamma_k'$, and using Lemma \ref{expression} (ii), we conclude that for, $k = 0,1$, 
$$\{ g_k(\sigma_k) \ | \ [\sigma_k]_k = 1,\ \sigma_k \in \Gamma_k \} \ \subset \ \langle \langle \ a \ \rangle \rangle_N.$$
\par 
We have established that $\langle \langle \ a \ \rangle \rangle_N$ contains all sets from (\ref{Ngeneration}), and therefore 
$N = \langle \langle \ a \ \rangle \rangle_N.$
\end{proof}

\begin{remark} 
Note that, the example introduced in \cite[Section 4]{IO} corresponds to the special case $\Gamma_0 \cong \Gamma_1 \cong \text{Sym}(3)$, and 
\cite[Proposition 4.5]{IO} is the respective version of Theorem \ref{amalgamstructure}. 
\end{remark}

\begin{remark} \label{edgecommutants}
This remark is related to the simplicity criteria of \cite[Section 4]{leboudec16} and uses notations thereof. It is easy to see that our groups satisfy 
the edge independence property, so \cite[Corollary 4.6]{leboudec16} can be applied. It follows that $N$ contains 
the group $\langle [G_e, G_e] \ | \ e \text{ is an edge of } T \rangle$. Since the last group is normal in $G$ and since $N$ is simple, it follows that 
$$ N \ = \ \langle [G_e, G_e] \ | \ e \text{ is an edge of } T \rangle.$$ 
Last conclusion can also be obtained directly. 
\end{remark}

\subsection{Analytic Structure} 

\begin{lem} \label{generaltype}
The action of each group $G = G[I_0,I_1;\iota_0,\iota_1;\Gamma_0,\Gamma_1]$ on its Bass-Serre tree is minimal and of general type. 
\end{lem}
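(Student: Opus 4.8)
The plan is to prove the two assertions in turn. Minimality is immediate: $G$ acts transitively on the vertex set of its Bass--Serre tree $T$ (as recorded in Section~2), hence minimally. So the substance is to show that the action is of general type, i.e.\ that $G$ contains two transverse hyperbolic elements.

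By Remark~\ref{nondegenerate} we have $\#(I_0)\ge 3$ or $\#(I_1)\ge 3$, and since everything in the construction is symmetric under interchanging the indices $0$ and $1$, we may assume $\Index[G_0:H]=\#(I_0)\ge 3$. The first step is to choose $a,a'\in G_0\setminus H$ lying in distinct \emph{left} cosets of $H$ and, simultaneously, in distinct \emph{right} cosets of $H$. This is elementary. If some $a\in G_0\setminus H$ has $a^2\notin H$, take $a'=a^{-1}$: each of $a^{-1}H=aH$ and $Ha^{-1}=Ha$ would force $a^2\in H$. Otherwise $g^2\in H$ for every $g\in G_0$, so $N=\langle g^2\mid g\in G_0\rangle$ is a normal subgroup of $G_0$ contained in $H$ with $G_0/N$ abelian (every element has order dividing $2$); then the left and right cosets of $H$ in $G_0$ coincide, and any two elements of $G_0\setminus H$ in different cosets of $H$ (there are at least two such cosets) will do. Fix such $a,a'$, and fix any $b\in G_1\setminus H$.

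The second step is to set $x=ab$ and $y=a'b$. Each is cyclically reduced of length $2$ in $G_0 *_H G_1$, so all its powers are reduced of length $2k$; having unbounded powers it is not elliptic, hence hyperbolic (see \cite[I.6]{Serre}). One then identifies the axes: the axis of $x$ is the bi-infinite geodesic through the consecutive vertices
\[
\dots,\ b^{-1}a^{-1}G_1,\ b^{-1}G_0,\ G_1,\ G_0,\ aG_1,\ abG_0,\ \dots
\]
(the edge $b^{-1}H$ joins $b^{-1}G_0$ to $G_1$, the edge $aH$ joins $G_0$ to $aG_1$, and $x$ shifts this path by two positions), and the axis of $y$ runs through $\dots,\ b^{-1}(a')^{-1}G_1,\ b^{-1}G_0,\ G_1,\ G_0,\ a'G_1,\ a'bG_0,\ \dots$. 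The two axes share the segment from $b^{-1}G_0$ to $G_0$; they diverge at $G_0$ because $aG_1\ne a'G_1$ (since $aH\ne a'H$), and they diverge at $b^{-1}G_0$ because $b^{-1}a^{-1}G_1\ne b^{-1}(a')^{-1}G_1$ (since $Ha\ne Ha'$). Thus the axes of $x$ and $y$ meet in a finite segment; in particular they have no common endpoint in $\partial T$, so $x$ and $y$ have no common fixed point in $\partial T$, i.e.\ they are transverse. Hence the action is of general type.

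The step I expect to require the most care is the axis bookkeeping: verifying that the displayed bi-infinite paths are geodesics and are indeed the axes (so that the translation length is exactly $2$), and deducing ``the two axes meet in a finite segment'' correctly from the two divergence statements; the rest is formal. (Alternatively one can avoid axes and invoke the classification of group actions on trees: the action is minimal and, by nondegeneracy, $T$ is not a line; no element of $G$ fixes an end of $T$, because the vertex stabilizer $G_0$ already acts transitively, hence without fixed points, on the $\ge 2$ edges at the vertex $G_0$ and so fixes no ray out of it; and $G$ preserves no line, for otherwise the same transitivity would force $\Index[G_0:H]\le 2$ and $\Index[G_1:H]\le 2$, against nondegeneracy --- leaving only general type.)
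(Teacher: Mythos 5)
Your proof is correct, but it takes a genuinely different route from the paper's. The paper disposes of the lemma in three lines: transitivity gives minimality, Remark~\ref{nondegenerate} gives nondegeneracy of the amalgam (so the Bass--Serre tree is not a line), and then it simply cites \cite[Proposition 19~(ii)]{Harpepreaux}, which asserts that a nondegenerate amalgam acts on its Bass--Serre tree with two transverse hyperbolic elements. You instead re-prove the content of that citation by hand: the reduction to finding $a,a'\in G_0\setminus H$ in distinct left \emph{and} distinct right cosets of $H$ (your two-case argument via $a^2\notin H$ versus the normal subgroup generated by squares is clean and uses $\Index[G_0:H]\ge 3$ exactly where it must, namely in the second case), followed by the verification that the cyclically reduced words $ab$ and $a'b$ are hyperbolic with axes overlapping in a finite segment. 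The axis bookkeeping you flag as delicate does check out: the divergence at $G_0$ is equivalent to $aH\ne a'H$ and the divergence at $b^{-1}G_0$ to $Ha\ne Ha'$, and convexity of the intersection of two geodesics in a tree then bounds the overlap. What the paper's approach buys is brevity and uniformity with the HNN case (Lemma~\ref{HNNgeneraltype} cites the companion \cite[Proposition 20]{Harpepreaux}); what yours buys is self-containedness and an explicit pair of transverse hyperbolics. Two small polish points: the action is transitive on edges (and on each of the two vertex types), not on the whole vertex set, though minimality follows either way; and in your alternative sketch the claim should be that the \emph{group} $G$ fixes no end (via the pointwise fixing of the ray from $G_0$ to a putative fixed end by the vertex stabilizer), not that no single element does.
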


\begin{proof}
Since the action is transitive, it is minimal. Also, the Bass-Serre tree is not a linear tree by Remark \ref{nondegenerate}. 
The result now follows from \cite[Proposition 19 (ii)]{Harpepreaux}. 
\end{proof}

\begin{thm} \label{amalgamC*simple} 
The amalgamated free product $G = G[\Gamma_0,\Gamma_1]$ has the unique trace property. It is $C^*$-simple if and only if either one of the groups 
$\Gamma'_0$ or $\Gamma'_1$ is non-amenable.
\end{thm}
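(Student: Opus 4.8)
The plan is to obtain both assertions by assembling results already in place, with essentially no new computation; the content of the proof is really the correct bookkeeping. For the unique trace property, I would apply \cite[Proposition 3.1]{IO}, by which $G$ has the unique trace property if and only if $\ker G$ does. Since Proposition \ref{KjQj} (ii) gives $\ker G = \{1\}$ and the trivial group obviously has the unique trace property, this half is immediate.

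For $C^*$-simplicity, the key input is the criterion recalled in Subsection 3.1: by \cite[Theorem 3.9]{BIO} combined with Proposition \ref{KjTj} (i), and using that $G$ is a nondegenerate amalgam (Remark \ref{nondegenerate}) with $\ker G = \{1\}$ (Proposition \ref{KjQj} (ii)), the group $G$ is $C^*$-simple if and only if at least one of $K_0$, $K_1$ is trivial or non-amenable. By Proposition \ref{KjQj} (i) we have $K_0 = Q_1$ and $K_1 = Q_0$, so the condition becomes: $Q_0$ or $Q_1$ is trivial or non-amenable. I would then split into two directions. If one of $\Gamma'_0$, $\Gamma'_1$ is non-amenable, then Lemma \ref{Qjamenable} forces both $Q_0$ and $Q_1$ to be non-amenable, the criterion holds, and $G$ is $C^*$-simple. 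Conversely, if both $\Gamma'_0$ and $\Gamma'_1$ are amenable, then Lemma \ref{Qjamenable} makes $Q_0$ and $Q_1$ --- hence $K_0$ and $K_1$ --- amenable, and it remains to check that they are also both nontrivial, so that the criterion fails. Here I would invoke the standing hypothesis that $\Gamma'_0$ and $\Gamma'_1$ are not both trivial: if, say, $\Gamma'_j \neq \{1\}$, then $Q_j \supseteq Q_j(0) \cong \Gamma'_j$ is nontrivial, and then the Remark of Subsection 3.1 stating $K_0 = \{1\}$ if and only if $K_1 = \{1\}$ (equivalently $Q_0 = \{1\}$ iff $Q_1 = \{1\}$) yields that both $Q_0$ and $Q_1$ are nontrivial. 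Thus neither $K_0$ nor $K_1$ is trivial or non-amenable, so $G$ is not $C^*$-simple, which completes the equivalence.

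The individual steps are all short, and I do not expect a genuine obstacle, since the hard analytic content is packaged inside the imported criterion \cite[Theorem 3.9]{BIO}. The one point requiring care is the converse direction: one must verify that \emph{both} quasi-kernels are simultaneously nontrivial \emph{and} amenable, which is exactly where the hypothesis that the stabilizers are not both trivial, together with the symmetry $K_0 = \{1\} \Leftrightarrow K_1 = \{1\}$, enters. Everything else is routine manipulation of Proposition \ref{KjQj} and Lemma \ref{Qjamenable}.
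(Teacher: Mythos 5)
Your proposal is correct and follows essentially the same route as the paper, which simply cites \cite[Proposition 3.1]{IO}, \cite[Theorem 3.9]{BIO}, Proposition \ref{KjQj}, Proposition \ref{KjTj}, and Lemma \ref{Qjamenable} without further elaboration. Your write-up usefully makes explicit the one point the paper's terse proof leaves implicit, namely that in the converse direction both quasi-kernels are nontrivial (via the standing hypothesis that $\Gamma'_0$ and $\Gamma'_1$ are not both trivial and the equivalence $K_0=\{1\}\Leftrightarrow K_1=\{1\}$).
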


\begin{proof}
Since $G$ is a nondegenerate amalgam by Remark \ref{nondegenerate}, Proposition \ref{KjQj} (ii) and \cite[Proposition 3.1]{IO} establish 
the first part. Since the action of $G$ on its Bass-Serre tree is minimal and of general type by Lemma \ref{generaltype}, \cite[Theorem 3.9]{BIO}, 
Proposition \ref{KjQj} (i), Proposition \ref{KjTj}, and Lemma \ref{Qjamenable} establish the second part. 
\end{proof}

Now, we prove 

\begin{thm} 
The amalgamated free product $G = G[\Gamma_0,\Gamma_1]$ in not inner amenable. 
\end{thm}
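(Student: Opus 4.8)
The plan is to apply Proposition \ref{fledged}. By Lemma \ref{generaltype}, the action of $G=G[\Gamma_0,\Gamma_1]$ on its Bass--Serre tree $T$ is of general type, so it is enough to prove that this action is finitely fledged. Vertex stabilizers of $T$ are the conjugates of $G_0$ and $G_1$, so every elliptic element is conjugate into $G_0$ or $G_1$; since $\Upsilon_T(\gamma g\gamma^{-1})=\gamma\,\Upsilon_T(g)$, the diameter of $\Upsilon_T$ is a conjugacy invariant. Thus it suffices to show that $\Psi_T(g)$ is finite --- so that its convex hull $\Upsilon_T(g)$ has finite diameter --- for every $g\in(G_0\cup G_1)\setminus\{1\}$. (The amalgam acts on $T$ without inversions, so it is harmless to work with $T$ rather than with $T^{(1)}$.)

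First I would record the ``rigid'' behaviour of the defining generators. From Lemma \ref{generation}(i) and Lemma \ref{action}(i) one reads off that each generator $a$ has an anchor vertex $u_a$ --- namely $v(\iota_k)$ for $a=g_k(\sigma)$, and $v(\iota_k,i_k,\dots,i_{k+m})$ for $a=h_k(i_k,\dots,i_{k+m};\sigma)$ --- such that $a$ permutes the branches of $T$ emanating from $u_a$ (by $\sigma$ acting on the relevant index set) and fixes pointwise every branch that it maps to itself. Consequently $\Psi_T(a)$ is the single vertex $\{u_a\}$ (when $a\neq1$), and $T\setminus\fix(a)$ is a union of branches at $u_a$.

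Now fix $g\in(G_0\cup G_1)\setminus\{1\}$. \emph{Case $g\in H$.} Write $g=pq$ with $p\in Q_0$, $q\in Q_1$, using that these subgroups commute ((R1)) and generate $H$. By Proposition \ref{KjQj}(i) and Proposition \ref{KjTj}(i), $Q_0=K_1=G_{(T_1)}$ fixes $T_1$ pointwise and $Q_1=K_0=G_{(T_0)}$ fixes $T_0$ pointwise; hence $\fix(g)\cap T_0=\fix(p)\cap T_0$, $\fix(g)\cap T_1=\fix(q)\cap T_1$, and therefore $\Psi_T(g)\subseteq\Psi_T(p)\cup\Psi_T(q)\cup\{v(\iota_0),v(\iota_1)\}$. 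By Lemma \ref{generation}(ii), $p$ is a product of $g_0(\sigma_0')$ with finitely many $h_0(i^k_0,\dots,i^k_{n_k};\sigma_k'')$; combining the rigid description above with the convexity of $\fix(p)$, one checks that a vertex fixed by $p$ can have a moved neighbour only when it equals one of the finitely many anchors $v(\iota_0)$, $v(\iota_0,i^k_0,\dots,i^k_{n_k})$. Thus $\Psi_T(p)$ is finite, and likewise $\Psi_T(q)$; hence $\Psi_T(g)$ is finite. \emph{Case $g\in G_j\setminus H$.} Then $g$ fixes $v(\iota_j)$ and permutes the branches of $T$ at $v(\iota_j)$; the branch towards $v(\iota_{j+1\pmod 2})$ is not fixed because $G_j\cap G_{j+1\pmod 2}=H$. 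For each branch fixed by $g$ --- necessarily the branch below $v(\iota_j,i)$ for some $i\in I_j'$ --- $g$ fixes both endpoints of the edge $\gamma_j^iH$, hence fixes that edge, so $g^{(i)}:=(\gamma_j^i)^{-1}g\gamma_j^i\in H$. Writing $g=\gamma_j^ah$ with $a\in I_j'$ and $h\in H$ (by \eqref{classes}) and using (R6) to compute these conjugates, one checks that $g^{(i)}=1$ for all but finitely many $i$. Hence $\Psi_T(g)\subseteq\{v(\iota_j)\}\cup\bigcup_i\gamma_j^i\,\Psi_T(g^{(i)})$ is a finite union of finite sets by the case already treated, so it is finite. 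In all cases $\Psi_T(g)$ is finite, the action is finitely fledged, and Proposition \ref{fledged} gives that $G$ is not inner amenable.

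The main obstacle is the localization statement used in the case $g\in H$: that for a product of the rigid generators above, the frontier of its fixed-point set stays within the finite set of anchor vertices and does not run off to infinity along a ray. The point that forces this is that each generator alters exactly one coordinate at one prescribed depth; since the geodesic from $v(\iota_j)$ to a given vertex meets at most one vertex of each depth, two of our generators acting on the same coordinate along that geodesic must share their anchor, which excludes the kind of cancellation that could otherwise leave a fixed vertex with a moved neighbour arbitrarily deep in the tree. Granting this, the displayed inclusions for $\Psi_T(g)$ hold and the argument goes through.
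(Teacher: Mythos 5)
Your proposal is correct and follows essentially the same route as the paper: reduce by conjugacy to $g\in G_0\cup G_1$, decompose $g$ into the defining generators via Lemma \ref{generation}(ii), use the localized action of each generator (Lemma \ref{action}) to confine the frontier of $\fix(g)$ to a bounded region, and conclude via Proposition \ref{fledged}. The paper phrases the key localization step as the claim that every fixed vertex deeper than the longest generator occurring in $g$ has all of its descendants fixed, rather than as finiteness of $\Psi_T(g)$, but this is exactly the obstacle you identify, resolved by the same depth-monotonicity of the generators.
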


\begin{proof}
Lemma \ref{generaltype} allows us to apply Proposition \ref{fledged}. Therefore, we need to show that the action of 
$G = G[I_0,I_1;\iota_0,\iota_1;\Gamma_0,\Gamma_1]$ on its Bass-Serre is finitely fledged. 
\par
For this, take any	 elliptic element $g \in G \setminus \{ 1 \}$. Since $g$ fixes some vertex, it is a conjugate of an element of $G_j$, 
where either $j=0$, or $j=1$. The finite fledgedness property is conjugation invariant, so we can assume $g \in G_j \setminus \{ 1 \}$. 
\par
From Lemma \ref{generation} (ii) and (iii), we can write $g = g_j(\sigma_j) h_0 h_1$, where $\sigma_j \in \Gamma_j$, 
$$h_0 = \prod_{k=1}^{m} h_0(i_0^k, \dots, i_{0+n_k}^k; \theta'_k), \ \ h_1 = \prod_{l=m+1}^{r} h_1(i_1^l, \dots, i_{1+n_l}^l; \xi'_l),$$ 
$r \geq m \geq 0, \ \theta'_k \in \Gamma'_{j+n_k+1 \pmod2},\ \xi'_l \in \Gamma'_{j+n_l+1 \pmod2}$, and $i_z^p \in I'_{z \pmod2}$. 
We also require $0 \leq n_1 \leq \dots \leq n_m $ and $0 \leq n_{m+1} \leq \dots \leq n_r$. 
\par
Let's assume that $g$ fixes a vertex $v = v(\iota_t, i_t, \dots, i_{t+n})$, where $n \geq \max \{ n_m, n_r \} + 1$, and take 
$w = v(\iota_t, i_t, \dots, i_{t+n}, j_{t+n+1}, \dots, j_{t+n+d})$ for any $d \geq 1$ and any $j_k \in I'_{k \pmod 2}$. We note that, $h_{t + 1 \pmod 2}$ fixes $w$ and $h_t$ modifies only indices with numbers no greater than $\{ n_m, n_r \} + 1 \leq n$. Therefore, 
$$h_t v = v(\iota_t, i'_t, \dots, i'_{t+n}) \text{     and     } h_t w = v(\iota_t, i'_t, \dots, i'_{t+n}, j_{t+n+1}, \dots, j_{t+n+d}),$$ 
for some $i'_k \in I'_{k \pmod 2}$. By our assumption, it follows that 
$$v \ = \ g_j(\sigma_j) h_t v \ = \ g_j(\sigma_j)v(\iota_t, i'_t, \dots, i'_{t+n}).$$ 
Form the way in which $g_j(\sigma_j)$ acts on the vertices, it follows that $v = v(\iota_t, i'_t, \dots, i'_{t+n})$ and that $g_j(\sigma_j)v = v$. 
Consequently, 
$$h_t w \ = \ w \ \ \ \text{and} \ \ \ g_j(\sigma_j)w \ = \ w,$$
so $gw = w$. 
\par
It is easy to see that this concludes the proof.
\end{proof}

\begin{remark}
Clearly, Theorems \ref{amalgamC*simple} and \ref{amalgamstructure} imply: \\ 
If either $\Gamma_0$ or $\Gamma_1$ is non-amenable, then the amenablish radical of $G$ is trivial. \\ 
If $\Gamma_0$ and $\Gamma_1$ are both amenable, then, since the class of amenablish groups is closed under extensions, it follows that $G$ is 
amenablish. 
\end{remark}

\section{HNN-Extensions} 

\subsection{Notation, Definitions, Quasi-Kernels} 

We use notation, some of which appear in \cite{BIO}: 
\par 
$$T_\eps = \{ \gamma = g_0 \tau^\eps g_1\tau^{\eps_1}\cdots g_n\tau^{\eps_n}g_{n+1} \ | \ n \geq 0, \gamma \in \Lambda 
\text{ is reduced} \},$$ 
$$T_\eps^\dagger = \{ \gamma = \tau^\eps g_1\tau^{\eps_1}\cdots g_n\tau^{\eps_n}g_{n+1} \ | \ n \geq 0, \gamma \in \Lambda 
\text{ is reduced} \}.$$
For $\eps = \pm 1$, consider also the quasi-kernels defined in \cite{BIO}: 
\begin{equation} \label{K_eps} 
K_\eps \equiv \bigcap_{r \in \Lambda \setminus T_\eps^\dagger}  r H r^{-1}. 
\end{equation}
They satisfy the relation $\ker \Lambda = K_1 \cap K_{-1}$, where, by definition, 
$$\ker \Lambda \equiv  \bigcap_{r\in\Lambda}rHr^{-1}.$$ 
\par
It follows from \cite[Theorem 4.19]{BIO} that $\Lambda$ has the unique trace property if and only if $\ker \Lambda$ has the unique trace property. 
It also follows from \cite[Theorem 4.20]{BIO} that $\Lambda$ is $C^*$-simple if and only if $K_{-1}$ or $K_1$ is trivial or non-amenable provided 
$\Lambda$ is a non-ascending HNN-extension and $\ker \Lambda$ is trivial. 
\par
 We need the following results. 

\begin{remark} \label{Bass-Serre-HNN} 
Consider the Bass-Serre tree $\Theta = \Theta[\Lambda]$ of the group 
$$\Lambda = \operatorname{HNN}(G,H,\theta)=\langle G, \tau\mid\tau^{-1} h\tau=\theta(h)\text{ for all }h\in H\rangle,$$ 
and consider the edge $H$ connecting 
vertices $G$ and $\tau G$. Denote by $\Theta_1$ the full subtree of $\Theta$ consisting of all vertices $v \in \Theta$ satisfying 
$\dist(v, G) < \dist(v, \tau G)$. Also, denote by $\bar{\Theta}_1$ the full subtree of $\Theta$ consisting of all vertices $v \in \Theta$ satisfying 
$\dist(v, G) > \dist(v, \tau G)$. Likewise, consider the edge $\tau^{-1} H$ connecting vertices $G$ and $\tau^{-1} G$. Then, denote by $\Theta_{-1}$ 
the full subtree of $\Theta$ consisting of all vertices $v \in \Theta$ satisfying $\dist(v, G) < \dist(v, \tau^{-1} G)$. 
Also, denote by $\bar{\Theta}_{-1}$ the full subtree of $\Theta$ consisting of all vertices $v \in \Theta$ satisfying $\dist(v, G) > \dist(v, \tau^{-1} G)$. 
\par
It is easy to see that $\bar{\Theta}_\eps  = \tau^\eps \Theta_{-\eps}$, 
$$\Theta_\eps = \{ G \} \ \cup \ \{ \ t_\eps G \ | \ t_\eps \in \Lambda \setminus T_\eps^\dagger \ \}, \text{  and  } 
\bar{\Theta}_\eps = \{ \ t_\eps^\dagger G \ | \ t_\eps^\dagger \in T_\eps^\dagger \ \}.$$ 
\end{remark}

\begin{prop} \label{KepsTheta}
With the notation from the previous Remark, the following hold for each $\eps = \pm 1$: \\ 
(i) $K_\eps = \Lambda_{(\Theta_\eps)}$. \\ 
(ii) $K_\eps < H \cap \theta(H)$. \\ 
(iii) $\gamma K_\eps \gamma^{-1} = \Lambda_{( \gamma \Theta_\eps )}$ for every $\gamma \in \Lambda$. 
In particular $\Lambda_{( \bar{\Theta}_\eps )} = \tau^\eps K_{-\eps} \tau^{-\eps}.$ \\ 
\end{prop}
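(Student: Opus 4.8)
The plan is to prove (i) first — it carries the content — and then read off (ii) and (iii) from it, in close parallel with Proposition~\ref{KjTj}. Two elementary facts will be used throughout. First, $\Lambda$ acts on $\Theta$ without inversion, so an element stabilises an edge precisely when it fixes both endpoints; in particular, by Britton's lemma, the stabiliser of the edge $H$ is $\Lambda_G\cap\Lambda_{\tau G}=G\cap\tau G\tau^{-1}=H$, and the stabiliser of the edge $\tau^{-1}H$ is $\Lambda_G\cap\Lambda_{\tau^{-1}G}=G\cap\tau^{-1}G\tau=\theta(H)$. Second, an element that fixes a vertex $v$ together with all but at most one of the neighbours of $v$ must fix every neighbour of $v$, since it permutes the neighbours of $v$.

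For (i), I would run the chain
$$h\in K_\eps\ \Longleftrightarrow\ h\in rHr^{-1}\ \text{for all }r\in\Lambda\setminus T_\eps^\dagger\ \Longleftrightarrow\ h\text{ fixes both endpoints }rG,\ r\tau G\text{ of the edge }rH\ \text{for all such }r.$$
Taking $r=1$ gives $h\in H\subseteq G$, so $h$ fixes the vertex $G$; and for each $r\in\Lambda\setminus T_\eps^\dagger$ the containment $h\in rHr^{-1}\subseteq rGr^{-1}=\Lambda_{rG}$ shows $h$ fixes $rG$. Since $\Theta_\eps=\{G\}\cup\{t_\eps G:t_\eps\in\Lambda\setminus T_\eps^\dagger\}$ by Remark~\ref{Bass-Serre-HNN}, this gives $K_\eps\subseteq\Lambda_{(\Theta_\eps)}$. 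Conversely, let $h\in\Lambda_{(\Theta_\eps)}$ and $r\in\Lambda\setminus T_\eps^\dagger$; then $h$ fixes $rG\in\Theta_\eps$, and a short inspection of normal forms shows that $r\tau$ again lies in $\Lambda\setminus T_\eps^\dagger$ — whence $r\tau G\in\Theta_\eps$ is also fixed — in every case except $\eps=1$ with $r\in H$, in which case the edge $rH$ is the (unique) boundary edge $H$ of the half-tree $\Theta_1$; note that for $\eps=-1$ the boundary edge $\tau^{-1}H$ never occurs among the $rH$ with $r\in\Lambda\setminus T_{-1}^\dagger$. In that one exceptional case $rHr^{-1}=H$, the missing endpoint is $\tau G$ — the unique neighbour of $G$ outside $\Theta_1$ — and the second fact above forces $h$ to fix it, so $h\in\Lambda_G\cap\Lambda_{\tau G}=H=rHr^{-1}$. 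This closes the chain and proves $K_\eps=\Lambda_{(\Theta_\eps)}$.

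Part (ii) then follows quickly: by (i), $K_\eps=\Lambda_{(\Theta_\eps)}$ fixes $G$, fixes whichever of $\tau G,\ \tau^{-1}G$ lies in $\Theta_\eps$, and — by the second fact applied to $G$ — also fixes the other one, which is the unique neighbour of $G$ outside $\Theta_\eps$; hence $K_\eps$ fixes all of $G,\tau G,\tau^{-1}G$, so $K_\eps\subseteq\Lambda_G\cap\Lambda_{\tau G}\cap\Lambda_{\tau^{-1}G}=H\cap\theta(H)$ (properness of the inclusion follows as in Lemma~\ref{K(...)}; when $\eps=1$ the inclusion $K_1\subseteq H\cap\theta(H)$ is also visible directly from the definition, using $r=1$ and $r=\tau^{-1}$). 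Part (iii) is formal: for any action and any subset $Y$ one has $\gamma\,\Lambda_{(Y)}\,\gamma^{-1}=\Lambda_{(\gamma Y)}$, so (i) gives $\gamma K_\eps\gamma^{-1}=\gamma\,\Lambda_{(\Theta_\eps)}\,\gamma^{-1}=\Lambda_{(\gamma\Theta_\eps)}$; specialising $\gamma=\tau^\eps$ and invoking the identity $\bar\Theta_\eps=\tau^\eps\Theta_{-\eps}$ from Remark~\ref{Bass-Serre-HNN} together with (i) for the index $-\eps$ yields $\Lambda_{(\bar\Theta_\eps)}=\Lambda_{(\tau^\eps\Theta_{-\eps})}=\tau^\eps\Lambda_{(\Theta_{-\eps})}\tau^{-\eps}=\tau^\eps K_{-\eps}\tau^{-\eps}$.

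The only genuinely delicate step is the bookkeeping inside the proof of (i): one must determine, separately for $\eps=1$ and $\eps=-1$, exactly which edges $rH$ with $r\in\Lambda\setminus T_\eps^\dagger$ lie strictly inside $\Theta_\eps$ and which one — if any — is the boundary edge of the half-tree $\Theta_\eps$, and then dispose of that boundary edge by the ``fixing all-but-one neighbour'' observation rather than by a normal-form argument. Everything else reduces to routine manipulation of Britton normal forms and vertex stabilisers, exactly as in Proposition~\ref{KjTj}.
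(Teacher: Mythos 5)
Your argument is correct and follows essentially the same route as the paper: the same translation of $g\in rHr^{-1}$ into fixing the edge/vertices attached to $rH$ for part (i), the same ``$g$ permutes the neighbours of $G$ and fixes all but one, hence fixes all'' observation (which the paper deploys in part (ii)), and the same formal conjugation identity for part (iii). Your extra bookkeeping about the boundary edge ($rH=H$ when $\eps=1$ and $r\in H$, versus no boundary edge occurring when $\eps=-1$) makes explicit a point the paper's chain of equivalences passes over silently, and is accurate; the parenthetical about properness of the inclusion in (ii) is unnecessary, since the paper only asserts (and proves) containment there.
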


\begin{proof} 
(i) 
\begin{multline*} 
g \in K_\eps \ \ \ \Longleftrightarrow \\ 
r^{-1} g r \in H, \ \ \forall r \in \Lambda \setminus T_\eps^\dagger \ \ \Longleftrightarrow  \ 
g r \in r H, \ \ \forall r \in \Lambda \setminus T_\eps^\dagger \ \ \Longleftrightarrow  \\ 
g r H = r H, \ \ \forall r \in \Lambda \setminus T_\eps^\dagger \ \ \Longleftrightarrow 
\ \ \ g \ \text{fixes every edge of} \ \Theta_\eps \ \ \Longleftrightarrow  \\ 
g \in \Lambda_{(\Theta_\eps)}. 
\end{multline*} 
(ii) From (i), we know that every element $g \in K_\eps$ fixes all vertices adjacent to $G$, except for the vertex $\tau^\eps G$, eventually. 
Therefore, it also fixes $\tau^\eps G$, so $g$ fixes all edges around $G$. In particular, $g$ fixes the edge $H$, so $g \in H$. Likewise, 
$g$ fixes the edge $\tau^{-1} H$, so $g \in \tau^{-1} H \tau = \theta(H).$ \\ 
(iii) As in (i), we have
\begin{multline*} 
g \in \gamma K_\eps \gamma^{-1} \ \ \Longleftrightarrow \\ 
\gamma^{-1} g \gamma \in K_\eps \ \ \Longleftrightarrow \ \ 
\gamma^{-1} g \gamma \in \Lambda_{(\Theta_\eps)} \ \ \Longleftrightarrow \  \ 
g  \in \gamma \Lambda_{(\Theta_\eps)} \gamma^{-1} \ \ \Longleftrightarrow \\ 
g  \in  \Lambda_{(\gamma \Theta_\eps)}. 
\end{multline*}
\end{proof}

\begin{lem} \label{K1K-1} 
For $\eps = \pm 1$, $K_\eps$ is a normal subgroup of $H_{-\eps}$, and a normal subgroup of $H \cap \theta(H)$. Moreover, if 
$\ker \Lambda$ is trivial, then $K_{-1}$  and $K_1$ have a trivial intersection and mutually commute.
\end{lem}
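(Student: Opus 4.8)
The plan is to follow the template of Lemma~\ref{K0K1}, replacing its combinatorial normalisation step by the geometric description of the quasi-kernels supplied by Proposition~\ref{KepsTheta}. For the membership part, note that Proposition~\ref{KepsTheta}(ii) already gives $K_\eps < H \cap \theta(H)$, and since $H_{-\eps}$ is either $H$ or $\theta(H)$ we have $H \cap \theta(H) \subseteq H_{-\eps}$ in both cases, so $K_\eps$ is a subgroup of $H_{-\eps}$ and of $H \cap \theta(H)$; only normality remains. For that, I would use Remark~\ref{Bass-Serre-HNN}: $\Theta_\eps$ is exactly the connected component of the vertex $G$ in the tree obtained from $\Theta$ by deleting the edge $H$ (when $\eps = 1$) or the edge $\tau^{-1}H$ (when $\eps = -1$), and the stabiliser of that deleted edge was computed in Section~2 to be $H$, respectively $\tau^{-1}H\tau = \theta(H)$ — that is, $H_{-\eps}$ in both cases. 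Now any $h \in H_{-\eps}$ lies in $G = \Lambda_G$, hence fixes the vertex $G$, and it also fixes the deleted edge, so it preserves the component containing $G$; thus $h\Theta_\eps = \Theta_\eps$. Proposition~\ref{KepsTheta}(i),(iii) then give $hK_\eps h^{-1} = \Lambda_{(h\Theta_\eps)} = \Lambda_{(\Theta_\eps)} = K_\eps$, so $K_\eps \triangleleft H_{-\eps}$, and a fortiori $K_\eps \triangleleft H \cap \theta(H)$ since $H \cap \theta(H) \subseteq H_{-\eps}$.

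For the last assertion, assume $\ker\Lambda = \{1\}$. The identity $\ker\Lambda = K_1 \cap K_{-1}$ recorded above gives $K_1 \cap K_{-1} = \{1\}$ immediately. To show $K_1$ and $K_{-1}$ commute, take $k_\eps \in K_\eps$ for $\eps = \pm 1$; by Proposition~\ref{KepsTheta}(ii) both $k_1$ and $k_{-1}$ lie in $H \cap \theta(H)$, and by the normality just established both $K_1$ and $K_{-1}$ are normal subgroups of $H \cap \theta(H)$. Hence $k_1 k_{-1} k_1^{-1} \in K_{-1}$ and $k_{-1} k_1^{-1} k_{-1}^{-1} \in K_1$, so the commutator $k_1 k_{-1} k_1^{-1} k_{-1}^{-1}$, grouped as $(k_1 k_{-1} k_1^{-1}) k_{-1}^{-1}$, lies in $K_{-1}$, and grouped as $k_1 (k_{-1} k_1^{-1} k_{-1}^{-1})$, lies in $K_1$; therefore it lies in $K_1 \cap K_{-1} = \{1\}$, i.e.\ $k_1 k_{-1} = k_{-1} k_1$. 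This is the exact analogue of the final line of the proof of Lemma~\ref{K0K1}.

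I expect the only delicate point to be the index bookkeeping in the normality step: one must carefully match the definition of $\Theta_\eps$ in Remark~\ref{Bass-Serre-HNN} against the edge-stabiliser computation of Section~2 to be certain that the edge separating $\Theta_\eps$ from $\bar{\Theta}_\eps$ has stabiliser $H_{-\eps}$ rather than $H_\eps$; once the sign convention is pinned down, everything else is formal. It is worth remarking that, just as for amalgams, one does not even need the general form of Proposition~\ref{KepsTheta}(iii) for arbitrary $\gamma\in\Lambda$ here — only its specialisation to $\gamma\in H_{-\eps}$ — so the argument could also be written purely in terms of the defining intersection $K_\eps = \bigcap_{r\in\Lambda\setminus T_\eps^\dagger} rHr^{-1}$ by checking that left multiplication by $h\in H_{-\eps}$ permutes the index set $\Lambda\setminus T_\eps^\dagger$, but the tree formulation is cleaner.
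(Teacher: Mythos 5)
Your proof is correct. The second half (trivial intersection and commutation via the commutator trick inside $H\cap\theta(H)$) coincides with the paper's argument essentially word for word. For the normality step the paper argues combinatorially: for $h \in H_{-\eps}$ it computes $h\cdot T_\eps^\dagger = \{h\tau^\eps g_1\cdots\} = \{\tau^\eps\theta^\eps(h)g_1\cdots\} = T_\eps^\dagger$, so left multiplication by $h$ permutes the index set $\Lambda\setminus T_\eps^\dagger$ of the defining intersection (\ref{K_eps}) and hence $hK_\eps h^{-1}=K_\eps$ --- this is exactly the ``purely combinatorial'' alternative you sketch in your closing remark. Your primary route instead goes through the tree: you identify $\Theta_\eps$ as the component of the vertex $G$ after deleting the edge whose stabiliser is $H_{-\eps}$, deduce $h\Theta_\eps=\Theta_\eps$ for $h\in H_{-\eps}$, and invoke Proposition~\ref{KepsTheta}(i) and (iii). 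Your sign bookkeeping is right (the edge $H$ has stabiliser $H=H_{-1}$ and the edge $\tau^{-1}H$ has stabiliser $\tau^{-1}H\tau=\theta(H)=H_1$, i.e.\ $H_{-\eps}$ in both cases), and since Proposition~\ref{KepsTheta} precedes the lemma and does not rely on it, there is no circularity. The geometric route makes the normality statement transparent and reuses machinery already established; the paper's computation has the minor advantages of not needing the tree at all and of exhibiting explicitly the relation $h\tau^\eps=\tau^\eps\theta^\eps(h)$ that explains why $H_{-\eps}$, rather than $H_\eps$, is the correct normaliser.
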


\begin{proof}
From Proposition \ref{KepsTheta} (ii), it follows that $K_1$ and $K_{-1}$ are subgroups of $H \cap \theta(H)$. 
Take $h \in H_{-\eps}$. Then, 
\begin{multline*} 
h \cdot T_\eps^\dagger = \{ h \tau^\eps g_1\tau^{\eps_1}\cdots g_n\tau^{\eps_n}g_{n+1} \ | \ n \geq 0, \ 
\tau^\eps g_1\tau^{\eps_1}\cdots g_n\tau^{\eps_n}g_{n+1} \text{ is reduced} \} =  \\ 
\{ \tau^\eps \theta^\eps(h) g_1\tau^{\eps_1}\cdots g_n\tau^{\eps_n}g_{n+1} \ | \ n \geq 0, \ 
\tau^\eps g_1\tau^{\eps_1}\cdots g_n\tau^{\eps_n}g_{n+1} \text{ is reduced} \} =  T_\eps^\dagger.
\end{multline*}
This gives the first assertion. For the second assertion, take $k_\eps \in K_\eps$ for each $\eps = \pm 1$. Then, from 
$K_\eps \triangleleft H \cap \theta(H)$, it follows that $k_{-1} k_1^{-1} k_{-1}^{-1} \in K_1$ and $k_1 k_{-1} k_1^{-1} \in K_{-1}$. Thus, 
$$K_{-1} \ni (k_1 k_{-1} k_1^{-1}) k_{-1}^{-1} = k_1 (k_{-1} k_1^{-1} k_{-1}^{-1}) \in K_1,$$
and therefore $k_1 k_{-1} k_1^{-1}k_{-1}^{-1} \in K_1 \cap K_{-1} = \ker \Lambda = \{ 1 \}$.
\end{proof}

\begin{lem} \label{K(gamma)}
(i) Let $\gamma = \tau^{\eps_n} g_n \cdots g_2 \tau^{\eps_1} g_1 \tau^\eps \in \Lambda$ be reduced. 
Then $\gamma \cdot T_{-\eps}^\dagger \supset T_{-\eps_n}^\dagger$. In particular $K_{-\eps_n} < \gamma K_{-\eps} \gamma^{-1}$. \\ 
(ii) Let $\gamma \in G \setminus H_\eps$. Then $\gamma T_{-\eps}^\dagger \cap T_{-\eps}^\dagger = \emptyset$. In particular 
$\gamma K_{-\eps} \gamma^{-1} \cap K_{-\eps} = \ker \Lambda$. \\ 
(iii) Let $\gamma \in \Lambda$ be a reduced word starting and ending with $\tau^\eps$. Then, 
$T_{-\eps}^\dagger \cap \gamma T_\eps^\dagger = \emptyset$. In particular $K_{-\eps} \cap \gamma K_\eps \gamma^{-1} = \ker \Lambda$.
\end{lem}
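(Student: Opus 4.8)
The three assertions each have a combinatorial part about the coset systems $T_\eps^\dagger$ and an ``in particular'' part about the quasi-kernels $K_\eps$, and the kernel parts are formal consequences of the combinatorial ones, so I would do all the work on the set statements. Since left translation by $\gamma$ permutes $\Lambda$, the definition $K_\eps=\bigcap_{r\in\Lambda\setminus T_\eps^\dagger}rHr^{-1}$ gives $\gamma K_\eps\gamma^{-1}=\bigcap_{r\in\Lambda\setminus\gamma T_\eps^\dagger}rHr^{-1}$. Hence $T_{-\eps_n}^\dagger\subseteq\gamma T_{-\eps}^\dagger$ forces $\Lambda\setminus\gamma T_{-\eps}^\dagger\subseteq\Lambda\setminus T_{-\eps_n}^\dagger$, so $K_{-\eps_n}\le\gamma K_{-\eps}\gamma^{-1}$ (intersecting over a smaller index set gives a larger group); while $\gamma T_{-\eps}^\dagger\cap T_{-\eps}^\dagger=\emptyset$ forces $(\Lambda\setminus\gamma T_{-\eps}^\dagger)\cup(\Lambda\setminus T_{-\eps}^\dagger)=\Lambda$ by De Morgan, so $\gamma K_{-\eps}\gamma^{-1}\cap K_{-\eps}=\bigcap_{r\in\Lambda}rHr^{-1}=\ker\Lambda$, and identically $K_{-\eps}\cap\gamma K_\eps\gamma^{-1}=\ker\Lambda$ in (iii). (The strict inclusion ``$<$'' in (i) I would obtain exactly as for Lemma~\ref{K(...)}: $\gamma\tau^{-\eps}$ lies in $\gamma T_{-\eps}^\dagger$ but not in $T_{-\eps_n}^\dagger$, and one then passes to the tree description of the $K$'s from Proposition~\ref{KepsTheta}.) The only tools needed for the set statements are: concatenating two reduced words yields a reduced word provided the $\tau$-powers meeting at the seam have the same sign; and the $\tau$-syllable count and the sign of the leading $\tau$-power of a nontrivial reduced word are invariants of the element — both immediate from Britton's Lemma.

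For (i): $\gamma^{-1}=\tau^{-\eps}g_1^{-1}\tau^{-\eps_1}g_2^{-1}\cdots g_n^{-1}\tau^{-\eps_n}$ is reduced, begins with $\tau^{-\eps}$, and ends with $\tau^{-\eps_n}$. Given $s\in T_{-\eps_n}^\dagger$, which begins with $\tau^{-\eps_n}$, the seam of $\gamma^{-1}s$ pits $\tau^{-\eps_n}$ against $\tau^{-\eps_n}$ — same sign, no cancellation — so $\gamma^{-1}s$ is reduced and still begins with $\tau^{-\eps}$; thus $\gamma^{-1}s\in T_{-\eps}^\dagger$ and $s=\gamma(\gamma^{-1}s)\in\gamma T_{-\eps}^\dagger$. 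This is the claimed $T_{-\eps_n}^\dagger\subseteq\gamma T_{-\eps}^\dagger$, and the subgroup statement follows as above.

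For (ii) and (iii) I would use the ``prepend $\tau^\eps$ and count syllables'' device. In (ii), suppose $\gamma t=t'$ with $t,t'\in T_{-\eps}^\dagger$, $\gamma\in G\setminus H_\eps$, and write $t=\tau^{-\eps}g_1\tau^{\eps_1}\cdots$. Then $\gamma t=\gamma\tau^{-\eps}g_1\tau^{\eps_1}\cdots$ is reduced, so $t'$ has the same $\tau$-length as $t$, and $\tau^\eps t'$ is reduced of $\tau$-length one less than that of $t$. On the other hand $\tau^\eps\gamma t=\tau^\eps\gamma\tau^{-\eps}g_1\tau^{\eps_1}\cdots$ is reduced — the hypothesis $\gamma\notin H_\eps$ is exactly what keeps $\tau^\eps\gamma\tau^{-\eps}$ from collapsing — and has $\tau$-length one more than that of $t$; since $\tau^\eps\gamma t=\tau^\eps t'$, comparing the two $\tau$-lengths is a contradiction, so $\gamma T_{-\eps}^\dagger\cap T_{-\eps}^\dagger=\emptyset$. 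In (iii), $\gamma$ is a reduced word beginning and ending with $\tau^\eps$, so for $t'\in T_\eps^\dagger$ (which begins with $\tau^\eps$) the product $\gamma t'$ has matching signs at the seam, hence is reduced and begins with $\tau^\eps$; if $\gamma t'=t$ with $t\in T_{-\eps}^\dagger$, this contradicts $t$ being a reduced word that begins with $\tau^{-\eps}$ (a nontrivial reduced word cannot begin simultaneously with $\tau^\eps$ and with $\tau^{-\eps}$). Therefore $T_{-\eps}^\dagger\cap\gamma T_\eps^\dagger=\emptyset$, and the kernel statement follows formally.

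The one place demanding care — the main obstacle, such as it is — is the reducedness bookkeeping: one must verify that prepending or appending the various $\tau^{\pm\eps}$ or the $G$-factor $\gamma$ never triggers a hidden cancellation deeper inside the word, and that the invariance facts about $\tau$-length and leading sign are applied to genuinely reduced expressions. As a cross-check, all of this can be read off the Bass--Serre tree $\Theta$ via Remark~\ref{Bass-Serre-HNN}: there $\{tG: t\in T_\eps^\dagger\}=\bar\Theta_\eps$ is the half-tree of $\Theta$ beyond the edge joining $G$ and $\tau^\eps G$, and (i)--(iii) turn into the geometric statements that a $\gamma$-translate of such a half-tree either engulfs another half-tree rooted at $G$ (part (i)), or is pushed onto a part of $\Theta$ disjoint from a half-tree at $G$ (parts (ii),(iii)), with Proposition~\ref{KepsTheta} then converting these back into the quasi-kernel inclusions.
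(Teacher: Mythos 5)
Your proof is correct and follows essentially the same route as the paper: part (i) is the same seam/concatenation argument (the paper inserts $\gamma\gamma^{-1}$ in front of an element of $T_{-\eps_n}^\dagger$ and notes the two adjacent $\tau^{-\eps_n}$'s cannot pinch), and your passage to the quasi-kernels via complements of the index sets is exactly the paper's observation $\gamma\cdot(\Lambda\setminus T_{-\eps}^\dagger)=\Lambda\setminus\gamma T_{-\eps}^\dagger$. The paper dismisses (ii) and (iii) with ``follow easily,'' and your Britton's-lemma syllable-count and leading-sign arguments are a correct filling-in of those details (only your parenthetical on strictness of ``$<$'' in (i) is inconclusive as stated, but the paper's own proof does not address strictness either and elsewhere uses ``$<$'' merely for containment).
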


\begin{proof}
(i) Observe that 
\begin{equation*}
\begin{aligned}
\gamma \cdot T_{-\eps} \  \supset & \ \{ \gamma \cdot \tau^{-\eps} g_1^{-1} \tau^{-\eps_1} \cdots g_n^{-1} \tau^{-\eps_n} \cdot 
\tau^{-\eps_n} \cdot g_{n+1} \tau^{\eps_{n+1}} g_{n+2} \tau^{\eps_{n+2}} \cdots g_{n+m} \tau^{\eps_{n+m}} g_{n+m+1} \ | \\ 
& m \geq 0, \ \tau^{-\eps_n} g_{n+1} \tau^{\eps_{n+1}} g_{n+2} \cdots g_{n+m} \tau^{\eps_{n+m}} g_{n+m+1} \text{ is reduced} \} = \\  
                     & \{ \lambda = \tau^{-\eps_n} g_{n+1} \tau^{\eps_{n+1}} g_{n+2} \cdots g_{n+m} \tau^{\eps_{n+m}} g_{n+m+1} \ | 
\ m \geq 0, \ \lambda \text{ is reduced} \} = \\ 
                     & \ T_{-\eps_n}.
\end{aligned}
\end{equation*}
The second statement follows from the observation 
$$\gamma \cdot (\Lambda \setminus T_{-\eps}^\dagger) = \Lambda \setminus \gamma T_{-\eps}^\dagger \subset 
\Lambda \setminus T_{-\eps_n}^\dagger.$$
(ii) and (iii) follow easily. 
\end{proof} 

\begin{lem} \label{K()K(')}
Let $\gamma = g_{n+1} \tau^{\eps_n} g_n \cdots g_2 \tau^{\eps_1} g_1 \tau^\eps$, 
$\gamma' = g'_{n+1} \tau^{\eps'_n} g'_n \cdots g'_2 \tau^{\eps'_1} g'_1 \tau^\eps$, and \\ 
$\gamma'' = g''_{n+1} \tau^{\eps''_n} g'_n \cdots g''_2 \tau^{\eps''_1} g''_1 \tau^{-\eps}$ 
be reduced, where $n \geq 0$ and $\eps = \pm 1$. 
Then: \\ 
(i) If $(\gamma')^{-1} \gamma \in H_{-\eps}$, then 
$\gamma K_\eps \gamma^{-1} = \gamma' K_\eps (\gamma')^{-1}$. \\ 
(ii) If $\ker \Lambda$ is trivial and if $(\gamma')^{-1} \gamma \notin H_{-\eps}$, then 
$\gamma K_\eps \gamma^{-1}$ and $\gamma' K_\eps (\gamma')^{-1}$ 
have a trivial intersection and mutually commute. \\ 
(iii) If $\ker \Lambda$ is trivial, then $\gamma K_\eps \gamma^{-1}$ and $\gamma'' K_{-\eps} (\gamma'')^{-1}$ have a trivial intersection 
and mutually commute. 
\end{lem}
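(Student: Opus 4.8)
The plan is to imitate the proof of Lemma~\ref{K(.)K(.')}, using Proposition~\ref{KepsTheta} to turn each quasi-kernel conjugate into the pointwise stabilizer of a half-tree of the Bass--Serre tree $\Theta=\Theta[\Lambda]$. Put $\delta=(\gamma')^{-1}\gamma$ in parts (i) and (ii), and $\delta''=(\gamma'')^{-1}\gamma$ in part (iii); since conjugation by $\gamma'$ (resp.\ $\gamma''$) is an automorphism of $\Lambda$, it suffices, in (i)--(ii), to work with $\delta K_\eps\delta^{-1}$ and $K_\eps$, and in (iii) with $\delta''K_\eps(\delta'')^{-1}$ and $K_{-\eps}$. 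Recall that $H_{-\eps}$ is exactly the stabilizer of the edge $e_0$ of $\Theta$ joining $G$ to $\tau^\eps G$, that $\Theta_\eps$ is the half-tree of $e_0$ containing $G$, and similarly that $\Theta_{-\eps}$ is the half-tree containing $G$ of the edge $e_0'$ joining $G$ to $\tau^{-\eps}G$. Part (i) is then immediate: $\delta\in H_{-\eps}$ and $K_\eps\triangleleft H_{-\eps}$ by Lemma~\ref{K1K-1}, so $\delta K_\eps\delta^{-1}=K_\eps$.

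For part (ii), the hypothesis $\delta\notin H_{-\eps}$ says precisely that $\delta\Theta_\eps\ne\Theta_\eps$. The crux is to verify that $\Theta_\eps$ and $\delta\Theta_\eps$ are \emph{linked}, meaning $\Theta_\eps\cup\delta\Theta_\eps=\Theta$ and $\Theta_\eps\cap\delta\Theta_\eps$ contains an edge. For this I would put $\delta$ in reduced form: if $\delta\notin G$, then $\delta$, and likewise $\delta^{-1}=\gamma^{-1}\gamma'$, is a reduced word beginning with $\tau^{-\eps}$ and ending with $\tau^\eps$ (the outer $\tau^{\mp\eps}$ come from $(\gamma')^{-1}$ and $\gamma$ and cannot be cancelled unless the whole word collapses into $G$), so the geodesic $G\to\delta G\to\delta\tau^\eps G$ leaves $G$ in the $\tau^{-\eps}G$-direction; this places $\delta e_0$ inside $\Theta_\eps$ with $\Theta_\eps$ pointing toward it, and symmetrically $e_0$ inside $\delta\Theta_\eps$ with $\delta\Theta_\eps$ pointing toward it --- exactly the linked configuration. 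The subcase $\delta\in G\setminus H_{-\eps}$ is similar, $e_0$ and $\delta e_0$ then being distinct edges at the common vertex $G=\delta G$. Two elementary facts about a group acting on a tree then close the argument: since $\ker\Lambda=\{1\}$, the action on $\Theta$ is faithful (each edge-stabilizer is a conjugate of $H$, so the kernel of the action lies in $\ker\Lambda$), hence $\Lambda_{(\delta\Theta_\eps)}\cap\Lambda_{(\Theta_\eps)}=\Lambda_{(\Theta)}=\{1\}$; and when $A\cup B=\Theta$ for subtrees $A,B$ the stabilizer $\Lambda_{(A)}$ preserves $B$ and $\Lambda_{(B)}$ preserves $A$, so $[\Lambda_{(A)},\Lambda_{(B)}]$ fixes every vertex of $\Theta$ and is therefore trivial. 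Alternatively, when $\delta\notin G$ one can bypass the geometry: Lemma~\ref{K(gamma)}(i) applied to the reduced word $\delta^{-1}$ gives $\delta K_\eps\delta^{-1}<K_{-\eps}$, and then Lemma~\ref{K1K-1} yields $K_\eps\cap K_{-\eps}=\{1\}$ and $[K_\eps,K_{-\eps}]=\{1\}$.

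Part (iii) runs along the same lines with $\delta''$ replacing $\delta$. Here $\gamma$ contributes the trailing $\tau^\eps$ of $\delta''$ and $(\gamma'')^{-1}$ the leading $\tau^\eps$; having equal exponent these cannot cancel each other, so $\delta''$ is a reduced word beginning and ending with $\tau^\eps$ of $\tau$-length at least $2$ --- in particular $\delta''\notin G$, so no degenerate subcase arises. The geodesic $G\to\delta''G\to\delta''\tau^\eps G$ now leaves $G$ in the $\tau^\eps G$-direction, which places $\delta''e_0$ inside $\Theta_{-\eps}$ with $\Theta_{-\eps}$ pointing toward it, and $e_0'$ inside $\delta''\Theta_\eps$ with $\delta''\Theta_\eps$ pointing toward it; thus $\Theta_{-\eps}$ and $\delta''\Theta_\eps$ are linked, and the two tree facts above show that $\Lambda_{(\delta''\Theta_\eps)}$ and $\Lambda_{(\Theta_{-\eps})}$ intersect trivially and commute. (For the trivial intersection alone one may instead quote Lemma~\ref{K(gamma)}(iii), since $\delta''$ begins and ends with $\tau^\eps$.) The one place I expect to be genuinely delicate --- and hence the main obstacle --- is this combinatorial positioning of the half-trees: because reduced words in an HNN-extension are not unique, one must argue carefully that the extreme $\tau$-syllables of $\delta$ and $\delta''$ survive every reduction, equivalently track on which side of $e_0$ and $e_0'$ the vertex $G$ sits, and one must isolate the case $\delta\in G$ in part (ii). Everything else parallels the amalgam case, with the hypothesis $\ker\Lambda=\{1\}$ used only through faithfulness of the action.
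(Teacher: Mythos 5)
Your argument is correct, and in fact it contains the paper's proof essentially verbatim as the ``alternative'' routes you mention in passing: the paper disposes of (i) exactly as you do (normality of $K_\eps$ in $H_{-\eps}$ from Lemma \ref{K1K-1}); of (ii) by splitting into the case $\delta\in G\setminus H_{-\eps}$ (citing Lemma \ref{K(gamma)} (ii)) and the case where $\delta$ is a reduced word beginning with $\tau^{-\eps}$ and ending with $\tau^{\eps}$ (citing Lemma \ref{K(gamma)} (i) to get $\delta K_\eps\delta^{-1}<K_{-\eps}$ and then Lemma \ref{K1K-1}); and of (iii) by observing that $\delta''$ reduces to a word beginning and ending with $\tau^\eps$ and citing Lemma \ref{K(gamma)} (iii). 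What your primary route adds is a self-contained geometric argument via Proposition \ref{KepsTheta}: you show that the two relevant half-trees of $\Theta$ are linked (their union is all of $\Theta$) and deduce both the trivial intersection and the commutation from faithfulness of the action, using the easy fact that the pointwise stabilizer of one of two covering subtrees preserves the other. This buys two things the paper leaves implicit: a verification that the extreme $\tau$-syllables of $\delta$ and $\delta''$ survive reduction (your symmetric-cancellation observation is the right one; the paper merely asserts the analogous fact in (iii)), and an explicit proof of the commutation in the subcase $\delta\in G\setminus H_{-\eps}$ of (ii), where the cited Lemma \ref{K(gamma)} (ii) literally delivers only the trivial intersection. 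Both routes rest on the same underlying picture, since Lemma \ref{K(gamma)} is itself the combinatorial shadow of the half-tree geometry of Remark \ref{Bass-Serre-HNN}.
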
 

\begin{proof} 
(i) $(\gamma')^{-1} \gamma K_\eps \gamma^{-1} \gamma' = K_\eps$ by Lemma \ref{K1K-1}. \\ 
(ii) If $(\gamma')^{-1} \gamma$ is an element of $G \setminus H_{-\eps}$, then the assertion follows from Lemma \ref{K(gamma)} (ii). 
If $(\gamma')^{-1} \gamma$ starts with $\tau^{-\eps}$ and ends with $\tau^\eps$, then, by Lemma \ref{K(gamma)} (i), it follows that 
$$(\gamma')^{-1} \gamma K_\eps \gamma^{-1} \gamma' < K_{-\eps},$$
which combined with $K_\eps \cap K_{-\eps} = \ker \Lambda = \{ 1 \}$, proves the assertion. \\ 
(iii) Observe that, the reduced form of $(\gamma'')^{-1} \gamma$ starts and ends with $\tau^\eps$, therefore the assertion follows from 
Lemma \ref{K(gamma)} (iii).
\end{proof}
\par
Assume that $\ker \Lambda = \{ 1 \}$.
\par
It follows from Lemma \ref{K()K(')} that, for two reduced words 
$$\gamma = s_{n+1} \tau^{\eps_n} s_n \cdots s_2 \tau^{\eps_1} s_1 \tau^\eps \text{ and } 
\gamma' = t_{n+1} \tau^{\eps'_n} t_n \cdots t_2 \tau^{\eps'_1} t_1 \tau^\eps$$ 
with $s_i, t_i \in S_{-1} \cup S_1$ and $\eps, \eps_i, \eps'_i \in \{ -1, 1 \}$, 
$$\gamma K_\eps \gamma^{-1} = \gamma' K_\eps (\gamma')^{-1}$$ 
if and only if $\gamma = \gamma'$, and this happens if and only if 
$\eps_i = \eps'_i$ and $s_i = t_i, \ \forall i$. 
In the case $\gamma \not= \gamma'$, $\gamma K_\eps \gamma^{-1}$ and $\gamma' K_\eps (\gamma')^{-1}$ have a trivial intersection 
and mutually commute. \\
If $\gamma'' = r_{n+1} \tau^{\eps''_n} r_n \cdots r_2 \tau^{\eps''_1} s_1 \tau^{-\eps}$ is another reduced word, where 
$r_i \in S_{-1} \cup S_1$ and $\eps''_i \in \{ -1, 1 \}$, 
then $\gamma K_\eps \gamma^{-1}$ and $\gamma'' K_{-\eps} (\gamma'')^{-1}$ have a trivial intersection and mutually commute. 
\par
From the above considerations, it follow that
\begin{equation} \label{cal(K)} 
\mathcal{K}(0) \equiv 
\underset{s \in S_{-1}}{\bigoplus} s K_1 s^{-1} \ \oplus \ \underset{t \in S_1}{\bigoplus} t K_{-1} t^{-1} 
\end{equation}
and, for $n \geq 0$, 
\begin{equation} \label{cal(K)} 
\mathcal{K}(n+1) \equiv  
\underset{s_{n+1} \tau^{\eps_n} s_n \cdots s_2 \tau^{\eps_1} s_1 \tau^\eps \text{ reduced }}{\underset{s_i \in S_{-1} \cup S_1,\ \eps_i = \pm 1}
{\underset{\eps = \pm 1}{\bigoplus}}} 
s_{n+1} \tau^{\eps_n} s_n \cdots s_2 \tau^{\eps_1} s_1 \tau^\eps K_\eps 
\tau^{-\eps} s_1^{-1} \tau^{-\eps_1} s_2^{-1} \cdots s_n^{-1} \tau^{-\eps_n} s_{n+1}^{-1}
\end{equation}
are normal subgroups of $G$. \\ 
Also, consider the groups 
$$\mathcal{K}(\eps, 0) \equiv \underset{s \in S_{-\eps}}{\bigoplus} s K_1 s^{-1} \ \oplus \ \underset{t \in S'_\eps}{\bigoplus} t K_{-1} t^{-1},$$ 
which are normal in $H_\eps$ for $\eps = \pm 1$.

\begin{remark}
The group $G$ acts transitively on the vertices $s \tau G$, where $s \in S_{-1}$. It also acts transitively on the vertices $s \tau^{-1} G$, where $s \in S_1$. 
This fact is an important ingredient in the examples below.
\end{remark}

\begin{remark}
It follows from Lemma \ref{K(gamma)} that $K_{-1}$ is isomorphic to a subgroup of $K_1$ and vice-versa. Consequently, $K_{-1} = \{ 1 \}$ 
if and only if $K_1 = \{ 1 \}$. In this situation, $\mathcal{K}(n) = \{ 1 \} \ \forall n \geq 0$. 
\end{remark}

\subsection{A Family of Examples} 

For $\eps = \pm 1$, consider nonempty sets $I'_\eps$, and let $I_\eps \equiv I'_\eps \sqcup \{ \iota_\eps \}$. Also, let $\Sigma_\eps$ be transitive permutation groups on $I_\eps$, and let $\Gamma = \Sigma_{-1} \cdot \Sigma_1$ be the corresponding permutation group on $I_{-1} \sqcup I_1$. 
Let $\Sigma'_\eps \equiv (\Sigma_\eps)_{ \iota_\eps }$ be the respective stabilizer groups, and define 
$\Gamma_\eps \equiv \Gamma_{ \iota_\eps } = \Sigma'_\eps \cdot \Sigma_{-\eps}$. Define 
$$\Lambda[\Sigma_{-1}, \Sigma_1] = \Lambda[I_{-1}, I_1, \iota_{-1}, \iota_1; \Sigma_{-1}, \Sigma_1] = 
\operatorname{HNN}(G,H,\theta)=\langle G, \tau\mid\tau^{-1} h\tau=\theta(h)\text{ for all }h\in H\rangle,$$
where
\begin{equation*}
\begin{aligned}
\underline{H} \equiv 
\langle \{ h(i_1, \eps_1 \dots, i_n, \eps_n; \sigma_n) \ | & \ n \in \mathbb{N},\ \eps_t \in \{-1, 1 \},\ i_t \in I_{ - \eps_t}, \text{ and } 
\sigma_n \in \Gamma_{\eps_n}; \\ 
                                                                              & \text{with the condition} \ i_t \in I'_{ - \eps_t}, \text{ whenever } \eps_t \eps_{t-1} = -1;\ \} \rangle,
\end{aligned}
\end{equation*}
$$H_\eps = \langle \underline{H} \cup \{ h(\sigma_\eps) \ | \ \sigma_\eps \in \Gamma_\eps \} \rangle,\ \eps= \pm 1.$$
Finally, define 
$$G \ = \ \langle H_{-1}, H_1 \rangle \ = \ \langle \underline{H} \cup 
\{ \ h(\sigma) \ | \ \sigma \in \Gamma \} \rangle.$$
\par 
The following relations hold (there are redundancies): \\ 
(R1) Elements $h(\sigma_{-1})$'s and $h(\sigma_1)$'s commute for all $\sigma_\eps \in \Sigma_\eps$, where $\eps = \pm 1$. \\ 
(R2) Let $1 \leq m < n$, $\sigma_n \in \Gamma_{\eps_n}$, and $\sigma'_m \in \Gamma_{e_m}$. If 
$(i_1, \eps_1 \dots, i_m, \eps_m) \not= (j_1, e_1 \dots, j_m, e_m)$, the elements 
$$h(j_1, e_1 \dots, j_m, e_m; \sigma'_m) \text{  and  } h(i_1, \eps_1 \dots, i_m, \eps_m, \dots, i_n, \eps_n; \sigma_n)$$
commute. \\ 
(R3) For $1 \leq m < n$ and $\sigma_t \in \Gamma_{\eps_t}$, the following holds 
\begin{multline*}
h(i_1, \eps_1 \dots, i_m, \eps_m; \sigma_m)  h(i_1, \eps_1 \dots, i_m, \eps_m, i_{m+1}, \eps_{m+1}, \dots, i_n, \eps_n; \sigma_n) 
h(i_1, \eps_1 \dots, i_m, \eps_m; \sigma_m)^{-1} \ = \\  
h( i_1, \eps_1 \dots, i_m, \eps_m, \sigma_m(i_{m+1}), \eps_{m+1}, \dots, i_n, \eps_n; \sigma_n). 
\end{multline*}
(R4) For $\sigma_m, \sigma'_m \in \Gamma_{\eps_m}$, the following holds
$$h(i_1, \eps_1 \dots, i_m, \eps_m; \sigma_m)  h(i_1, \eps_1 \dots, i_m, \eps_m; \sigma'_m) \ = 
h(i_1, \eps_1 \dots, i_m, \eps_m; \sigma_m \sigma'_m).$$ 
(R5) For $\sigma, \sigma' \in \Gamma$, the following holds 
$$h(\sigma) h(\sigma') = h(\sigma \sigma').$$ 
(R6) For $n \in \mathbb{Z}$, $\sigma \in \Gamma$, and $\sigma_n \in \Gamma_{\eps_n}$, the following holds 
$$ h(\sigma)  h(i_1, \eps_1 \dots, i_n, \eps_n; \sigma_n) h(\sigma)^{-1}  \ = \ 	
h( \sigma(i_1), \eps_1, i_2, \eps_2, \dots, i_n, \eps_n; \sigma_n).$$ 
(R7) For $\eps = \pm 1$ and $\sigma_\eps \in \Gamma_\eps$, the following holds 
$$\theta^{-\eps}(h(\sigma_\eps)) \ = ( \tau^{\eps} h(\sigma_\eps) \tau^{-\eps}) \ = h(\iota_{ - \eps}, \eps; \sigma_\eps).$$
(R8) For $\eps = \pm 1,\ n \in \mathbb{N}$, and $\sigma_n \in \Gamma_{\eps_n}$, the following holds 
\begin{multline*} 
\theta^{-\eps}(h(i_1, \eps, i_2, \eps_2, \dots, i_n, \eps_n; \sigma_n)) \ 
= ( \tau^\eps h(i_1, \eps, i_2, \eps_2, \dots, i_n, \eps_n; \sigma_n) \tau^{-\eps}) \ = \\ 
h(\iota_{ - \eps}, \eps, i_1, \eps, i_2, \eps_2 \dots, i_n, \eps_n; \sigma_n). 
\end{multline*}
(R9) For $\eps = \pm 1$, $n \in \mathbb{N}$, and $\sigma_n \in \Gamma_{\eps_n}$ the following holds 
\begin{multline*} 
\theta^{\eps}(h(i_1, \eps \dots, i_n, \eps_n; \sigma_n)) \ = ( \tau^{-\eps} h(i_1, \eps \dots, i_n, \eps_n; \sigma_n) \tau^{\eps}) \ = \\ 
\begin{cases}
h(i_2, \eps_2 \dots, i_n, \eps_n; \sigma_n), \text{ if } i_1 = \iota_{ - \eps}, \\
h(\iota_\eps, - \eps, i_1, \eps \dots, i_n, \eps_n; \sigma_n), \text{ if } i_1 \not= \iota_{ - \eps}.
\end{cases}
\end{multline*}

\subsection{Some Basic Properties of the Examples and Their Quasi-Kernels} 

In this subsection, we fix a group $\Lambda = \Lambda[I_{-1}, I_1, \iota_{-1}, \iota_1; \Sigma_{-1}, \Sigma_1]$.
\par 
First, let's note that $\Index[G : H_\eps] = \#(I_\eps)$ for $\eps = \pm 1$. To see this, recall that $\Sigma_\eps$ acts transitively on $I_\eps$, and, 
for $i \in I_\eps$, choose $\mu_\eps^i \in \Sigma_\eps$ to satisfy $\mu_\eps^i(\iota_\eps) = i$. 
Let's denote $\lambda_\eps^i = h(\mu_\eps^i)$. If $\sigma \in \Sigma_\eps \setminus \Sigma'_\eps$ satisfies 
$\sigma(\iota_\eps) = i$, then $(\mu_\eps^i)^{-1} \circ \sigma(\iota_\eps) = \iota_\eps$. Therefore, 
$(\mu_\eps^i)^{-1} \circ \sigma \in \Sigma'_\eps$, so $h((\mu_\eps^i)^{-1} \circ \sigma) \in H_\eps$. 
It follows that $h(\sigma) \in h(\mu_\eps^i) H_\eps = \lambda_\eps^i H$. Consequently, for each $\eps = \pm 1$, 
\begin{equation} \label{HNNclasses} 
G \ = \ H_\eps \sqcup \underset{i \in I'_\eps}{\bigsqcup} \lambda_\eps^i H_\eps.
\end{equation}

It is easy to see that, in these notations, for $\eps = \pm 1$, the set 
$$S_\eps \ = \ \{ \ \lambda_\eps^i \ | \ i \in I'_\eps \ \} \ \cup \ \{ \ 1 \ \}$$ 
is a left coset representative of $H_\eps$ in $G$. 
\par 
Next, consider the action of $\Lambda$ on its Bass-Serre tree $\Theta = \Theta [\Lambda]$. 
The set of all adjacent vertices to the vertex $G$ is 
$$\{ \ \tau G \ \} \ \cup \ \{\ \lambda_{-1}^i \tau G\ | \ i \in I'_{-1} \ \} \ \cup \ \{\ \tau^{-1} G \ \} \ \cup \ \{ \ \lambda_1^i \ | \ i \in I'_1 \ \}.$$
This set can be indexed by the set $I_{-1} \cup I_1$ in the obvious way: Denote by $v(\emptyset)$ the vertex $G$, by $v(\iota_{-1}, 1)$ the vertex 
$\tau G$, by $v(\iota_1, -1)$ the vertex $\tau^{-1} G$, by $v(i_{-1}, 1)$ the vertex $\lambda_{-1}^{i_{-1}} \tau G$, where $i_{-1} \in I'_{-1}$, and by 
$v(i_1, -1)$ the vertex $\lambda_1^{i_1} \tau^{-1} G$, where $i_1 \in I'_1$.  Denote a general vertex 
$$\lambda_{-\eps_1}^{i_1} \tau^{\eps_1} \cdots \lambda_{-\eps_n}^{i_n} \tau^{\eps_n} G$$ 
by $v(i_1, \eps_1, \dots, i_n, \eps_n)$ for an element 
$\lambda_{-\eps_1}^{i_1} \tau^{\eps_1} \cdots \lambda_{-\eps_n}^{i_n} \tau^{\eps_n} \in \Lambda$ in its normal form, (i.e.) 
$i_t \in I_{- \eps_t}$, and if $\eps_{t-1} \cdot \eps_t = -1$, then $i_t \in I'_{- \eps_t}$. 
\par
With the notation of Remark \ref{Bass-Serre-HNN}, for $\eps = \pm 1$, $\Theta_\eps$ is the full subtree of $\Theta$ containing the vertex 
$v(\emptyset) = G$ and vertices $v(i_1, \eps_1, \dots, i_n, \eps_n),$ where $n \geq 1$ and $(i_1, \eps_1) \not= (\iota_{-\eps}, \eps)$, 
and $\bar{\Theta}_\eps$ is the full subtree of $\Theta$ containing the vertices $v(\iota_{-\eps}, \eps, i_1, \eps_1, \dots, i_n, \eps_n)$, 
where $n \geq 0$. 

\begin{remark} 
It follows from \cite[Exercise VI.3]{baumslag} that our examples are never finitely presented since $H$ is never finitely generated. 
\end{remark}

We continue with

\begin{lem} \label{HNNgeneration}
(i) Let $m \geq 1$, $\sigma_m \in \Gamma_{\eps_m}$, $i_t \in I_{-\eps_t}$, and $\eps \in \{ -1, 1 \}$ satisfying 
$\eps_t \eps_{t-1}=-1 \Rightarrow i_t \in I'_{-\eps_t}$. Then, 
$$h(i_1, \eps_1 \dots, i_m, \eps_m; \sigma_m) 
= \lambda_{-\eps_1}^{i_1} \tau^{\eps_1} \cdots \lambda_{-\eps_m}^{i_m} \tau^{\eps_m}
                                  h(\sigma_m) \tau^{-\eps_m} (\lambda_{-\eps_m}^{i_m})^{-1} \cdots \tau^{-\eps_1} (\lambda_{-\eps_1}^{i_1})^{-1}.$$

(ii) Every element $h$ of $G$ can be written as 
$$h = h (\sigma) \prod_{k=1}^{m} h(i_1^k, \eps_{k, 1}, \dots, i_{n_k}^k, \eps_{k, n_k}; \sigma_k),$$
where $m \geq 1, \ \sigma_k \in \Gamma_{\eps_{k, n_k}}$, $1 \leq n_1 \leq \dots \leq n_m$, and $\sigma \in \Gamma$ satisfying the condition: 
if $n_k = n_{k+a}$ for 
some $1 \geq k \geq m$ and some $a \geq 1$, then 
$$(i_1^k, \eps_{k, 1}, \dots, i_{n_k}^k, \eps_{k, n_k}) \not= (i_1^{k+a}, \eps_{k+a, 1}, \dots, i_{n_{k+a}}^{k+a}, \eps_{k+a, n_{k+a}}).$$
\par
(iii) Every element $g \in T_\eps$ can be written as 
$$g = \lambda_{-\eps}^i \tau^\eps \lambda_{-\eps_1}^{i_1} \tau^{\eps_1} \cdots \lambda_{-\eps_m}^{i_m} \tau^{\eps_m} h,$$
where $h \in G$ and $m \geq 0$.
\end{lem}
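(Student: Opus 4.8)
The plan is to establish the three parts in the order stated, each by manipulation of the defining relations (R1)--(R9) together with \eqref{HNNclasses}; the argument runs parallel to that of Lemma~\ref{generation}. For part~(i) I would proceed by induction on $m$. In the base case $m=1$, write $\lambda_{-\eps_1}^{i_1} = h(\mu_{-\eps_1}^{i_1})$, where $\mu_{-\eps_1}^{i_1}(\iota_{-\eps_1}) = i_1$; relation (R7) gives $\tau^{\eps_1}h(\sigma_1)\tau^{-\eps_1} = h(\iota_{-\eps_1},\eps_1;\sigma_1)$, and conjugating this by $h(\mu_{-\eps_1}^{i_1})$ and applying (R6) replaces the leading index $\iota_{-\eps_1}$ by $i_1$, which is exactly $h(i_1,\eps_1;\sigma_1)$. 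For the inductive step it suffices to prove
\[
h(i_1,\eps_1,\dots,i_m,\eps_m;\sigma_m) \;=\; \lambda_{-\eps_1}^{i_1}\tau^{\eps_1}\, h(i_2,\eps_2,\dots,i_m,\eps_m;\sigma_m)\, \tau^{-\eps_1}\bigl(\lambda_{-\eps_1}^{i_1}\bigr)^{-1}
\]
and then insert the inductive hypothesis for the middle factor. To verify this identity I conjugate the tail word by $\tau^{\eps_1}$: if $\eps_2 = \eps_1$ the result is given by (R8), and if $\eps_2 = -\eps_1$ by (R9). Here is the one point I expect to need genuine care: (R9) has two branches, a ``prepend'' branch and a ``collapse'' branch, and I must check that only the first can occur. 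The side condition imposed in the lemma ($\eps_t\eps_{t-1}=-1 \Rightarrow i_t\in I'_{-\eps_t}$) forces $i_2\in I'_{\eps_1}$, hence $i_2\ne\iota_{\eps_1}$, so indeed only the ``prepend'' branch applies. In either case conjugation by $\tau^{\eps_1}$ prepends the pair $(\iota_{-\eps_1},\eps_1)$, and a final use of (R6) with $\sigma = \mu_{-\eps_1}^{i_1}$ turns that $\iota_{-\eps_1}$ into $i_1$, completing the induction.

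Part~(ii) is a normal-form computation. Starting from any expression of $h\in G$ as a product of the generators $h(\sigma)$ (for $\sigma\in\Gamma$) and $h(i_1,\eps_1,\dots;\sigma_n)$, I would first merge consecutive $h(\sigma)$-factors via (R5), then use (R6) repeatedly to migrate all $h(\sigma)$-factors to the extreme left — each such move only re-indexes the first coordinate of the $h$-words it crosses, and since $\Gamma$ preserves each $I_{-\eps}$ setwise this keeps the words admissible. Next, (R2) and (R3) let me sort the remaining $h$-words by non-decreasing length: two words whose index--sign prefixes first disagree within the common range commute by (R2), while a strictly shorter word conjugates a longer one to another word of the same length by (R3), so shorter words can be pushed left. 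Finally (R4) amalgamates any two words carrying the same full index--sign sequence, which leaves exactly the stated distinctness condition among words of equal length, with $\sigma\in\Gamma$ the accumulated $h(\sigma)$-factor.

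Part~(iii) follows from the uniqueness of the HNN normal form recalled in the preliminary section, combined with \eqref{HNNclasses}. Adopting the convention $\lambda_{-\eps}^{\iota_{-\eps}}=1$, the set $S_\eps = \{\lambda_\eps^i \mid i\in I_\eps\}$ is a transversal for $H_\eps$ in $G$; hence a reduced $g\in T_\eps$ — one whose reduced form opens with a $G$-syllable followed immediately by $\tau^\eps$ — rewrites in normal form as $\lambda_{-\eps}^i\tau^\eps\lambda_{-\eps_1}^{i_1}\tau^{\eps_1}\cdots\lambda_{-\eps_m}^{i_m}\tau^{\eps_m}h$ with $h\in G$ the trailing $G$-syllable and $m\ge 0$, the reducedness conditions being exactly those built into the normal form. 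This is immediate once the transversals are identified and carries no real difficulty.
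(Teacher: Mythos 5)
Your proposal is correct and follows essentially the same route as the paper, whose proof simply cites repeated application of the relations ((R7), (R8), (R6) for (i); (R3), (R6) for (ii); equation \eqref{HNNclasses} and the HNN normal form for (iii)); your write-up just supplies the details, including the correct observation that in the inductive step of (i) the case $\eps_2=-\eps_1$ requires (R9) and that the lemma's side condition $i_2\in I'_{-\eps_2}$ excludes the collapsing branch.
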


\begin{proof}
(i) Follows by repeated applications of relations (R7), (R8), and (R6). \\ 
(ii) Follows by repeated applications of relations (R3) and (R6). \\ 
(iii) Follows by equation (\ref{HNNclasses}) and the structure of HNN-extensions. 
\end{proof} 

\begin{lem} \label{HNNaction} 
Let $n > m \geq 1$ and $\sigma_k \in \Gamma_{\eps_k}$. Then, the following hold 
\begin{multline*}
\ \ \text{(i)} \  h(i_1, \eps_1 \dots, i_m, \eps_m; \sigma_m)  v(i_1, \eps_1, \dots, i_m, \eps_m, i_{m+1}, \eps_{m+1}, \dots, i_n, \eps_n) = \\ 
= v(i_1, \eps_1, \dots, i_m, \eps_m, \sigma_m(i_{m+1}), \eps_{m+1}, \dots, i_n, \eps_n).
\end{multline*}

(ii) $h(i_1, \eps_1 \dots, i_m, \eps_m; \sigma_m) \in \Lambda_{v(i_1, \eps_1 \dots, i_m, \eps_m)}$, and $h(\sigma) \in \Lambda_{v(\emptyset)}$ 
for $\sigma \in \Gamma$. 
\par
(iii) If $\sigma_\eps \in \Gamma_\eps$, then $h(\sigma_\eps) \in \Lambda_{(\bar{\Theta}_{-\eps})} = \tau^{-\eps} K_\eps \tau^\eps$. 
\par
(iv) Let $m \leq n$, and let $h(i_1, \eps_1 \dots, i_n, \eps_n; \sigma_n), h(j_1, e_1 \dots, j_m, e_m; \delta_m) \in \Lambda$. If 
$(i_1, \eps_1 \dots, i_m, \eps_m) \not= (j_1, e_1 \dots, j_m, e_m)$, 
then $h(i_1, \eps_1 \dots, i_n, \eps_n; \sigma_n) \in \Lambda_{v(j_1, e_1 \dots, j_m, e_m)}$ and 
$h(j_1, e_1 \dots, j_m, e_m; \delta_m) \in \Lambda_{v(i_1, \eps_1 \dots, i_n, \eps_n)}$.
\par
(v) $h(i_1, \eps_1 \dots, i_n, \eps_n; \sigma_n) \in \Lambda_{(\bar{\Theta}_\eps)} \ \iff \ (i_1, \eps_1) \not= (\iota_{-\eps}, \eps)$.
\end{lem}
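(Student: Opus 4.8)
The plan is to obtain part (v) as a bookkeeping consequence of parts (i) and (iv). Recall from the description of the Bass--Serre tree $\Theta$ given just before the lemma that the vertex set of $\bar{\Theta}_\eps$ is exactly $\{ v(\iota_{-\eps}, \eps, j_1, e_1, \dots, j_m, e_m) \mid m \geq 0 \}$, and that $\Lambda_{(\bar{\Theta}_\eps)}$ is by definition the pointwise stabiliser of this subtree; since $\bar{\Theta}_\eps$ contains an edge and since in a tree an edge is determined by its two endpoints, an element of $\Lambda$ lies in $\Lambda_{(\bar{\Theta}_\eps)}$ precisely when it fixes every vertex of $\bar{\Theta}_\eps$. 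Write $h = h(i_1, \eps_1, \dots, i_n, \eps_n; \sigma_n)$, and note that we may assume $\sigma_n \neq \id$, since otherwise $h = 1$ by (R4) and the degenerate element is to be excluded from the equivalence.

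For the implication $(i_1, \eps_1) \neq (\iota_{-\eps}, \eps) \Rightarrow h \in \Lambda_{(\bar{\Theta}_\eps)}$, fix an arbitrary vertex $w = v(\iota_{-\eps}, \eps, j_1, e_1, \dots, j_m, e_m)$ of $\bar{\Theta}_\eps$, where $m \geq 0$. Its index string satisfies the normal-form/admissibility condition, so the element $h(\iota_{-\eps}, \eps, j_1, e_1, \dots, j_m, e_m; \delta)$ belongs to $\Lambda$ for any admissible $\delta$ and carries an index string of length $\ell := m+1$. If $\ell \leq n$, the length-$\ell$ index prefixes of $h$ and of this element already disagree in the first coordinate $(i_1, \eps_1) \neq (\iota_{-\eps}, \eps)$, so part (iv), applied with $w$'s element as the shorter one, yields $h \in \Lambda_w$. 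If $\ell > n$, the length-$n$ prefixes disagree in the first coordinate for the same reason, so part (iv), now with $h$ as the shorter one, again yields $h \in \Lambda_w$. As $w$ was arbitrary, $h$ fixes every vertex of $\bar{\Theta}_\eps$, hence $h \in \Lambda_{(\bar{\Theta}_\eps)}$.

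For the reverse implication we argue by contraposition: suppose $(i_1, \eps_1) = (\iota_{-\eps}, \eps)$, so $h = h(\iota_{-\eps}, \eps, i_2, \eps_2, \dots, i_n, \eps_n; \sigma_n)$. Since $\sigma_n \in \Gamma_{\eps_n}$ is nontrivial and fixes $\iota_{\eps_n}$, there is some $a \neq \iota_{\eps_n}$ with $\sigma_n(a) \neq a$, and then either $a \in I_{-\eps_n}$ or $a \in I'_{\eps_n}$. Choose the sign $\eps_{n+1}$ to be $\eps_n$ in the first case and $-\eps_n$ in the second; in either case $a \in I_{-\eps_{n+1}}$, and whenever $\eps_n \eps_{n+1} = -1$ we in fact have $a \in I'_{-\eps_{n+1}}$, so the string $(\iota_{-\eps}, \eps, i_2, \eps_2, \dots, i_n, \eps_n, a, \eps_{n+1})$ is in normal form and defines a vertex $w$ of $\bar{\Theta}_\eps$ of length $n+1 > n$. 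Part (i) then gives $h w = v(\iota_{-\eps}, \eps, i_2, \eps_2, \dots, i_n, \eps_n, \sigma_n(a), \eps_{n+1}) \neq w$, so $h \notin \Lambda_{(\bar{\Theta}_\eps)}$.

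The only point requiring care is the sign bookkeeping in the reverse direction: one must split according to whether the element $a$ moved by $\sigma_n$ lies in $I_{-\eps_n}$ or in $I'_{\eps_n}$, choosing $\eps_{n+1}$ so that the extended index string remains admissible and part (i) applies; one should also not overlook the trivial case $\sigma_n = \id$. Everything else is immediate from parts (i) and (iv), so part (v) is in effect a corollary of the preceding parts of the lemma.
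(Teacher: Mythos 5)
Your argument for part (v) is correct, and it is essentially the same route the paper takes: the forward implication from part (iv) (the first coordinates already disagree, so whichever of the two index strings is shorter, (iv) applies and $h$ fixes every vertex $v(\iota_{-\eps},\eps,j_1,e_1,\dots,j_m,e_m)$ of $\bar{\Theta}_\eps$), and the reverse implication from part (i) (extend the index string by one admissible entry that $\sigma_n$ moves, producing a vertex of $\bar{\Theta}_\eps$ that $h$ displaces). You are in fact more careful than the paper on two points it leaves implicit: the sign bookkeeping needed to make the appended entry $a$ admissible (choosing $\eps_{n+1}=\eps_n$ or $-\eps_n$ according to whether $a\in I_{-\eps_n}$ or $a\in I'_{\eps_n}$), and the degenerate case $\sigma_n=\id$, for which the stated equivalence fails and which must be excluded.

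The genuine gap is that this is a proof of only one of the five assertions. The statement to be proved is the whole lemma, and parts (i)--(iv) are precisely where the work lies: part (i) is a several-step computation that rewrites $h(i_1,\eps_1,\dots,i_m,\eps_m;\sigma_m)$ via Lemma \ref{HNNgeneration} (i), conjugates $h(\sigma_m)$ past $\lambda_{-\eps_{m+1}}^{i_{m+1}}$ to produce $\lambda_{-\eps_{m+1}}^{\sigma_m(i_{m+1})}$ times an element of $\Gamma_{-\eps_{m+1}}$, and then absorbs the leftover conjugate into the coset $G$; part (iii) requires the identification $\Lambda_{(\bar{\Theta}_{-\eps})}=\tau^{-\eps}K_\eps\tau^\eps$ from Proposition \ref{KepsTheta} together with another explicit conjugation computation; and part (iv) is a chain of equivalences reducing the claim to (iii) after rewriting $\tau^{-e_m}(\lambda_{-e_m}^{j_m})^{-1}\cdots(\lambda_{-e_1}^{j_1})^{-1}\lambda_{-\eps_1}^{i_1}\cdots\tau^{\eps_n}$ in normal form using Lemma \ref{HNNgeneration} (iii). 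By declaring (i) and (iv) as inputs you have assumed the hard content and supplied only the short corollary; as a proof of Lemma \ref{HNNaction} the proposal is therefore incomplete, even though the portion you did write is sound.
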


\begin{proof}
(i) First, note that 
$$ \sigma \equiv (\lambda_{-\eps_{m+1}}^{\sigma_m(i_{m+1})})^{-1} \circ h(\sigma_m)  \lambda_{-\eps_{m+1}}^{i_{m+1}} \ \in \ 
\Gamma_{-\eps_{m+1}}$$ 
since it fixes $\iota_{-\eps_{m+1}}$. It follows by Lemma \ref{HNNgeneration} (i) and (iii) that there are $k_t \in I_{\eps_t}$ and a 
$\chi \in H_{\eps_n}$ satisfying $(\tau^{\eps_{m+1}} \cdots \lambda_{-\eps_n}^{i_n} \tau^{\eps_n})^{-1} = 
\chi \tau^{-\eps_n} \lambda_{\eps_{n-1}}^{k_{n-1}} \cdots \lambda_{\eps_{m+1}}^{k_{m+1}} \tau^{-\eps_{m+1}}.$ Therefore, 
\begin{multline*} 
(\tau^{\eps_{m+1}} \cdots \lambda_{-\eps_n}^{i_n} \tau^{\eps_n})^{-1} 
h(\sigma) \tau^{\eps_{m+1}} \cdots \lambda_{-\eps_n}^{i_n} \tau^{\eps_n} = \\ 
\chi \tau^{-\eps_n} \lambda_{\eps_{n-1}}^{k_{n-1}} \cdots \lambda_{\eps_{m+1}}^{k_{m+1}} \tau^{-\eps_{m+1}} h(\sigma) 
\tau^{\eps_{m+1}} (\lambda_{\eps_{m+1}}^{k_{m+1}})^{-1} \cdots (\lambda_{\eps_{n-1}}^{k_{n-1}})^{-1} \tau^{\eps_n} \chi^{-1} = \\ 
\chi h(\iota_{\eps_n}, -\eps_n, k_{n-1}, -\eps_{n-1}, \dots, k_{m+2}, -\eps_{m+2}, \iota_{\eps_m+1}, -\eps_{m+1}; \sigma) \chi^{-1}.
\end{multline*}
Then, Lemma \ref{HNNgeneration} (i) implies
\begin{multline*}
h(i_1, \eps_1 \dots, i_m, \eps_m; \sigma_m)  v(i_1, \eps_1, \dots, i_m, \eps_m, i_{m+1}, \eps_{m+1}, \dots, i_n, \eps_n) = \\ 
\lambda_{-\eps_1}^{i_1} \tau^{\eps_1} \cdots \lambda_{-\eps_m}^{i_m} \tau^{\eps_m}
                                  h(\sigma_m) \tau^{-\eps_m} (\lambda_{-\eps_m}^{i_m})^{-1} \cdots \tau^{-\eps_1} (\lambda_{-\eps_1}^{i_1})^{-1}  
\cdot \lambda_{-\eps_1}^{i_1} \tau^{\eps_1} \cdots \lambda_{-\eps_n}^{i_n} \tau^{\eps_n} G = \\ 
\lambda_{-\eps_1}^{i_1} \tau^{\eps_1} \cdots \lambda_{-\eps_m}^{i_m} \tau^{\eps_m}
                                  h(\sigma_m)  \lambda_{-\eps_{m+1}}^{i_{m+1}} \tau^{\eps_{m+1}} \cdots \lambda_{-\eps_n}^{i_n} \tau^{\eps_n} G = \\ 
\lambda_{-\eps_1}^{i_1} \tau^{\eps_1} \cdots \lambda_{-\eps_m}^{i_m} \tau^{\eps_m} \lambda_{-\eps_{m+1}}^{\sigma_m(i_{m+1})} 
h(\sigma) \tau^{\eps_{m+1}} \cdots \lambda_{-\eps_n}^{i_n} \tau^{\eps_n} G = \\ 
\lambda_{-\eps_1}^{i_1} \tau^{\eps_1} \cdots \lambda_{-\eps_m}^{i_m} \tau^{\eps_m} \lambda_{-\eps_{m+1}}^{\sigma_m(i_{m+1})} 
\tau^{\eps_{m+1}} \cdots \lambda_{-\eps_n}^{i_n} \tau^{\eps_n} \cdot (\tau^{\eps_{m+1}} \cdots \lambda_{-\eps_n}^{i_n} \tau^{\eps_n})^{-1} 
h(\sigma) \tau^{\eps_{m+1}} \cdots \lambda_{-\eps_n}^{i_n} \tau^{\eps_n} G = \\ 
\lambda_{-\eps_1}^{i_1} \tau^{\eps_1} \cdots \lambda_{-\eps_m}^{i_m} \tau^{\eps_m} \lambda_{-\eps_{m+1}}^{\sigma_m(i_{m+1})} 
\tau^{\eps_{m+1}} \cdots \lambda_{-\eps_n}^{i_n} \tau^{\eps_n} \cdot \\ 
\cdot \chi h(\iota_{\eps_n}, -\eps_n, k_{n-1}, -\eps_{n-1}, \dots, k_{m+2}, -\eps_{m+2}, \iota_{\eps_m+1}, -\eps_{m+1}; \sigma) \chi^{-1}G = \\ 
\lambda_{-\eps_1}^{i_1} \tau^{\eps_1} \cdots \lambda_{-\eps_m}^{i_m} \tau^{\eps_m} \lambda_{-\eps_{m+1}}^{\sigma_m(i_{m+1})} 
\tau^{\eps_{m+1}} \cdots \lambda_{-\eps_n}^{i_n} \tau^{\eps_n} G = \\ 
v(i_1, \eps_1, \dots, i_m, \eps_m, \sigma_m(i_{m+1}), \eps_{m+1}, \dots, i_n, \eps_n).
\end{multline*}
(ii) Second claim is obvious. For the first claim, 
\begin{multline*}
h(i_1, \eps_1 \dots, i_m, \eps_m; \sigma_m) v(i_1, \eps_1 \dots, i_m, \eps_m) = \\ 
\lambda_{-\eps_1}^{i_1} \tau^{\eps_1} \cdots \lambda_{-\eps_m}^{i_m} \tau^{\eps_m}
                                  h(\sigma_m) \tau^{-\eps_m} (\lambda_{-\eps_m}^{i_m})^{-1} \cdots \tau^{-\eps_1} (\lambda_{-\eps_1}^{i_1})^{-1} 
\cdot \lambda_{-\eps_1}^{i_1} \tau^{\eps_1} \cdots \lambda_{-\eps_n}^{i_m} \tau^{\eps_m} G = \\ 
\lambda_{-\eps_1}^{i_1} \tau^{\eps_1} \cdots \lambda_{-\eps_m}^{i_m} \tau^{\eps_m} h(\sigma_m) G = \\ 
v(i_1, \eps_1 \dots, i_m, \eps_m).
\end{multline*}
(iii) The fact $\Lambda_{(\bar{\Theta}_{-\eps})} = \tau^{-\eps} K_\eps \tau^\eps$ is stated in Proposition \ref{KepsTheta}. 
Let $n \geq 0$, and let 
$v(\iota_\eps, -\eps, i_1, \eps_1, \dots, i_n, \eps_n) \in \bar{\Theta}_{-\eps}$. By the argument at the beginning of the proof of (i), there are 
$k_t \in I_{\eps_t}$ and a $\chi \in H_{\eps_n}$ satisfying 
\begin{multline*} 
(\tau^{-\eps} \lambda_{-\eps_1}^{i_1} \tau^{\eps_1} \cdots \lambda_{-\eps_n}^{i_n} \tau^{\eps_n})^{-1} 
h(\sigma_\eps) \tau^{-\eps} \lambda_{-\eps_1}^{i_1} \tau^{\eps_1} \cdots \lambda_{-\eps_n}^{i_n} \tau^{\eps_n} = \\ 
\chi h(\iota_{\eps_n}, -\eps_n, k_{n-1}, -\eps_{n-1}, \dots, i_{\eps_1}, -\eps_1, \eps, \iota_{-\eps}; \sigma_\eps) \chi^{-1}.
\end{multline*}
Therefore, 
\begin{multline*}
h(\sigma_\eps)  v(\iota_\eps, -\eps, i_1, \eps_1, \dots, i_n, \eps_n) = \\ 
h(\sigma_\eps) \tau^{-\eps} \lambda_{-\eps_1}^{i_1} \tau^{\eps_1} \cdots \lambda_{-\eps_m}^{i_n} \tau^{\eps_n} G = \\ 
\tau^{-\eps} \lambda_{-\eps_1}^{i_1} \tau^{\eps_1} \cdots \lambda_{-\eps_m}^{i_n} \tau^{\eps_n} \cdot 
(\tau^{-\eps} \lambda_{-\eps_1}^{i_1} \tau^{\eps_1} \cdots \lambda_{-\eps_m}^{i_n} \tau^{\eps_n})^{-1} 
h(\sigma_\eps) \tau^{-\eps} \lambda_{-\eps_1}^{i_1} \tau^{\eps_1} \cdots \lambda_{-\eps_m}^{i_n} \tau^{\eps_n} G = \\ 
\tau^{-\eps} \lambda_{-\eps_1}^{i_1} \tau^{\eps_1} \cdots \lambda_{-\eps_m}^{i_n} \tau^{\eps_n} \cdot 
\chi h(\iota_{\eps_n}, -\eps_n, k_{n-1}, -\eps_{n-1}, \dots, i_{\eps_1}, -\eps_1, \eps, \iota_{-\eps}; \sigma_\eps) \chi^{-1} G = \\ 
v(\iota_\eps, -\eps, i_1, \eps_1, \dots, i_n, \eps_n).
\end{multline*}
Consequently, $h(\sigma_\eps) \in \bar{\Theta}_{-\eps}$. \\ 
(iv) Note that, the element 
$\gamma = \tau^{-e_m} (\lambda_{-e_m}^{j_m})^{-1} \cdots \tau^{-e_1} (\lambda_{-e_1}^{j_1})^{-1} 
\lambda_{-\eps_1}^{i_1} \tau^{\eps_1} \cdots \lambda_{-\eps_n}^{i_n} \tau^{\eps_n}$ belongs to $T_{-e_m}^\dagger$ because of the condition 
$(i_1, \eps_1 \dots, i_m, \eps_m) \not= (j_1, e_1 \dots, j_m, e_m)$. It follows from Lemma \ref{HNNgeneration} (iii) that 
$\gamma = \tau^{-e_m} \lambda_{-l_1}^{k_1} \tau^{l_1} \lambda_{-l_2}^{k_2} \tau^{l_2} \cdots \lambda_{-l_s}^{k_s} \tau^{l_s} h$, where 
$h \in G$ and where $k_t \in I_{-l_t}$. Then, 
\begin{multline*}
h(j_1, e_1 \dots, j_m, e_m; \delta_m) \in \Lambda_{v(i_1, \eps_1 \dots, i_n, \eps_n)} \ \iff \\ 
\lambda_{-e_1}^{j_1} \tau^{e_1} \cdots \lambda_{-e_m}^{j_m} \tau^{e_m} h(\delta_m) 
\tau^{-e_m} (\lambda_{-e_m}^{j_m})^{-1} \cdots \tau^{-e_1} (\lambda_{-e_1}^{j_1})^{-1} \in \Lambda_{v(i_1, \eps_1 \dots, i_n, \eps_n)} \ \iff \\ 
h(\delta_m) \in \tau^{-e_m} (\lambda_{-e_m}^{j_m})^{-1} \cdots \tau^{-e_1} (\lambda_{-e_1}^{j_1})^{-1} 
\Lambda_{v(i_1, \eps_1 \dots, i_n, \eps_n)} \lambda_{-e_1}^{j_1} \tau^{e_1} \cdots \lambda_{-e_m}^{j_m} \tau^{e_m} \ \iff \\ 
h(\delta_m) \in \Lambda_{\tau^{-e_m} (\lambda_{-e_m}^{j_m})^{-1} \cdots \tau^{-e_1} (\lambda_{-e_1}^{j_1})^{-1}  v(i_1, \eps_1 \dots, i_n, \eps_n)} 
\ \iff \\ 
h(\delta_m) \in \Lambda_{\tau^{-e_m} (\lambda_{-e_m}^{j_m})^{-1} \cdots \tau^{-e_1} (\lambda_{-e_1}^{j_1})^{-1}  
\lambda_{-\eps_1}^{i_1} \tau^{\eps_1} \cdots \lambda_{-\eps_n}^{i_n} \tau^{\eps_n} G} \ \iff \\ 
h(\delta_m) \in \Lambda_{\tau^{-e_m} \lambda_{-l_1}^{k_1} \tau^{l_1} \lambda_{-l_2}^{k_2} \tau^{l_2} \cdots \lambda_{-l_s}^{k_s} \tau^{l_s} h  G} 
\ \iff \\ 
h(\delta_m) \in \Lambda_{v(\iota_{e_m}, -e_m, k_1, l_1, \dots, k_s, l_s)}.
\end{multline*}
Last line holds according to (iii). The inclusion $h(i_1, \eps_1 \dots, i_n, \eps_n; \sigma_n) \in \Lambda_{v(j_1, e_1 \dots, j_m, e_m)}$ 
is proven analogously. \\ 
(v) Every vertex of $\Lambda_{(\bar{\Theta}_\eps)}$ is of the form $v(\iota_{-\eps}, \eps, j_1, e_1 ,\dots, j_m, e_m)$, so if tuples 
$(i_1, \eps_1 \dots, i_n, \eps_n)$ and $(\iota_{-\eps}, \eps, j_1, e_1 ,\dots, j_m, e_m)$ satisfy the assumptions of (iv), then 
$h(i_1, \eps_1 \dots, i_n, \eps_n; \sigma_n) \in \Lambda_{(\bar{\Theta}_\eps)}$. By (i), 
$h(\iota_{-\eps}, \eps, j_1, e_1 ,\dots, j_m, e_m; \sigma_m) \notin \Lambda_{(\bar{\Theta}_\eps)}$, and the statement follows. 
\end{proof} 

\begin{prop} \label{LambdaK} 
For a group $\Lambda = \Lambda[I_{-1}, I_1, \iota_{-1}, \iota_1; \Sigma_{-1}, \Sigma_1]$, the following hold
\begin{multline*} 
\text{   (i)  } \Lambda_{(\bar{\Theta}_\eps)} = \langle \ \{ \ h(\sigma_{-\eps}) \ | \ \sigma_{-\eps} \in \Gamma_{-\eps} \ \} \ \cup \\ 
\{ \ h(i_1, \eps_1, \dots, i_m, \eps_m; \sigma_m) \ | \ m \geq 1, \ h(i_1, \eps_1, \dots, i_m, \eps_m; \sigma_m) \in H_{-\eps}, \text{ and } 
(i_1, \eps_1) \not= (\iota_{-\eps}, \eps) \ \} \ \rangle; 
\end{multline*} 
(ii) $K_\eps = \langle \{ \ h(\iota_\eps, -\eps; \sigma_{-\eps}) \ | \ \sigma_{-\eps} \in \Gamma_{-\eps} \ \} \sqcup 
\{ \ h(\iota_\eps, -\eps, i_1, \eps_1, \dots, i_n, \eps_n; \sigma_n) \ | \ n \geq 1, \sigma_n \in \Gamma_{\eps_n} \ \} \rangle.$ \\ 
(iii) $\ker \Lambda = \{ 1 \}$.
\end{prop}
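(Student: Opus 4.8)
The plan is to establish (i) first, deduce (ii) from it by conjugation by $\tau^{-\eps}$, and then derive (iii) from (ii); parts (i) and (ii) are the HNN-analogue of Propositions \ref{KjTj}(i) and \ref{KjQj}(i), while (iii) is the analogue of Proposition \ref{KjQj}(ii).

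For the inclusion ``$\supseteq$'' in (i) I would simply verify that every proposed generator lies in $\Lambda_{(\bar\Theta_\eps)}$: the elements $h(\sigma_{-\eps})$ do so by Lemma \ref{HNNaction}(iii) (applied with $\eps$ replaced by $-\eps$), and the elements $h(i_1,\eps_1,\dots,i_m,\eps_m;\sigma_m)$ with $(i_1,\eps_1)\neq(\iota_{-\eps},\eps)$ do so by Lemma \ref{HNNaction}(v). For ``$\subseteq$'', let $g\in\Lambda_{(\bar\Theta_\eps)}$. By Proposition \ref{KepsTheta}(iii), $g=\tau^\eps k\tau^{-\eps}$ with $k\in K_{-\eps}$; since $K_{-\eps}<H\cap\theta(H)$ by Proposition \ref{KepsTheta}(ii), the defining relation $\tau^{-1}h\tau=\theta(h)$ yields $g\in H_{-\eps}$. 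I would then use Lemma \ref{HNNgeneration}(ii), together with relations (R2), (R3), (R6), to write $g$ as a canonical product of generators of $H_{-\eps}$ of the form $h(\sigma_{-\eps})\cdot Q\cdot P$, where $Q$ collects the $\underline H$-generators whose index tuple begins with a pair $\neq(\iota_{-\eps},\eps)$ and $P$ collects those whose index tuple begins with $(\iota_{-\eps},\eps)$. The factor $h(\sigma_{-\eps})$ and the factor $Q$ fix $\bar\Theta_\eps$ pointwise, by Lemma \ref{HNNaction}(iii),(v); since $g$ does too, so does $P$. Running the vertex-chasing argument of Proposition \ref{KjQj}(i) — now applied to sufficiently deep vertices $v(\iota_{-\eps},\eps,i_2,\eps_2,\dots)$ of $\bar\Theta_\eps$ and using Lemma \ref{HNNaction}(i),(iv) to track how each factor of $P$ moves them — one is then forced to conclude $P=1$. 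Hence $g$ lies in the subgroup generated by the proposed set, which proves (i). Part (ii) now follows: since $\Lambda_{(\bar\Theta_\eps)}=\tau^\eps K_{-\eps}\tau^{-\eps}$ by Proposition \ref{KepsTheta}(iii), rewriting each $\tau^{-\eps}(\,\cdot\,)\tau^\eps$ of a generator from (i) by means of relations (R7)--(R9) and Lemma \ref{HNNgeneration}(i) produces exactly the generating set claimed for $K_{-\eps}$ in (ii) (after interchanging the roles of $\eps$ and $-\eps$).

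For (iii), recall the relation $\ker\Lambda=K_1\cap K_{-1}$, so it is enough to prove $K_1\cap K_{-1}=\{1\}$. By (ii), for each $\eps$ the group $K_\eps$ is generated by elements of $\underline H$ whose index tuples all begin with one and the same coordinate pair, and this leading pair is different for $\eps=1$ and for $\eps=-1$. Relations (R2)--(R6) show that generators of $\underline H$ with distinct leading pairs generate subgroups that commute and intersect trivially (commutation is (R2), and conjugation by a shorter generator preserves the leading pair by (R3)). Consequently $K_1$ and $K_{-1}$ intersect trivially, so $\ker\Lambda=\{1\}$ — exactly as the identity $\ker G=K_0\cap K_1=Q_1\cap Q_0=\{1\}$ works in Proposition \ref{KjQj}(ii).

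The hard part will be the ``$\subseteq$'' half of (i): producing a workable canonical form for an arbitrary element of $\Lambda_{(\bar\Theta_\eps)}$ inside $H_{-\eps}$, and then eliminating the factors with leading pair $(\iota_{-\eps},\eps)$ by the vertex-chasing argument. The bookkeeping is heavier than in the amalgam case because of the exponents $\eps_t=\pm1$ and the repeated need to rewrite words of the form $h(\sigma)\,\lambda^{i_t}_{-\eps_t}\,\cdots$ using (R5), (R6) and Lemma \ref{HNNgeneration}(i),(iii); once the canonical form is secured, the remainder essentially transcribes the proof of Proposition \ref{KjQj}(i).
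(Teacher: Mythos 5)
Your proposal is correct and follows essentially the same route as the paper: the inclusion $\supseteq$ in (i) via Lemma \ref{HNNaction}(iii),(v), the inclusion $\subseteq$ by reducing to $H_{-\eps}$, invoking the normal form of Lemma \ref{HNNgeneration}(ii), and eliminating the factors with leading pair $(\iota_{-\eps},\eps)$ by chasing a sufficiently deep vertex of $\bar{\Theta}_\eps$; then (ii) by conjugating by $\tau^{\mp\eps}$ using Proposition \ref{KepsTheta}(iii) and (R7)--(R9), and (iii) from the disjointness of the leading pairs of the two generating sets (which the paper dismisses as obvious). Your explicit derivation that $\Lambda_{(\bar{\Theta}_\eps)}\subset H_{-\eps}$ from Proposition \ref{KepsTheta}(ii),(iii) is in fact cleaner than the paper's citation at that step.
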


\begin{proof}
(i) Denote the group on the right-hand-side by $\Delta$. The inclusion $\Delta < \Lambda_{(\bar{\Theta}_\eps)}$ follows from 
Lemma \ref{HNNaction} (iii) and (v). 
Take an element $h \in \Lambda_{(\bar{\Theta}_\eps)}$. From Proposition \ref{KepsTheta} (iv), it follows that $h \in H_{-\eps}$, 
so if $h = h(\sigma)$, then $\sigma \in \Gamma_{-\eps}$. If $h \not= h(\sigma)$, Lemma \ref{HNNgeneration} (ii) can be applied for 
$h^{-1} \in H_{-\eps}$. It follows that 
$$h =   \prod_{k=1}^{m} h(i_1^k, \eps_{k, 1}, \dots, i_{n_k}^k, \eps_{k, n_k}; \sigma_k) \cdot h(\sigma_{-\eps}),$$
where $m \geq 0, \ \sigma_k \in \Gamma_{\eps_{k, n_k}}$, $n_1 \geq n_2 \geq \dots \geq n_m \geq 1$, and 
$\sigma_{-\eps} \in \Gamma_{-\eps}$. Assume $h(i_1^l, \eps_{l, 1}, \dots, i_{n_l}^l, \eps_{l, n_l}; \sigma_l) \notin \Delta$ for some 
$1\leq l \leq m$ and that $l$ is the biggest number with this property. Then, it is clear that $i_1^l = \iota_{-\eps}$ and $\eps_{l,1} = \eps$. 
Also, $\sigma_l \in \Gamma_{\eps_{l, n_l}}$ is not the identity, so there 
exist two different elements $\kappa, \rho \in I_{-1} \sqcup I_1$, such that $\sigma_l(\kappa) = \rho$. Let $h$ act on 
$$v = v(i_1^l, \eps_{l, 1}, \dots, i_{n_l}^l, \eps_{l, n_l}, \kappa, \eps_{l, n_l}, \alpha_1, e_1, \dots, \alpha_{n_1}, e_{n_1}),$$ 
where $\alpha$'s and $e$'s are arbitrary and allowed. The terms $h(\sigma_{-\eps})$ and 
$\prod_{k={l+1}}^{m} h(i_1^k, \eps_{k, 1}, \dots, i_{n_k}^k, \eps_{k, n_k}; \sigma_k)$ leave $v$ fixed. From the final condition of Lemma 
\ref{HNNgeneration} (ii) and from Lemma \ref{HNNaction} (iv), it follows that the terms with length equal to $n_l$ also leave $v$ fixed. 
Finally, from Lemma \ref{HNNaction} (i), it follows that the remaining terms act on $v$ by eventually changing only the $\alpha$'s. 
Therefore we conclude that 
\begin{multline*} 
h v(i_1^l, \eps_{l, 1}, \dots, i_{n_l}^l, \eps_{l, n_l}, \kappa, \eps_{l, n_l}, \alpha_1, e_1, \dots, \alpha_{n_1}, e_{n_1}) = \\ 
v(i_1^l, \eps_{l, 1}, \dots, i_{n_l}^l, \eps_{l, n_l}, \rho, \eps_{l, n_l}, \beta_1, e_1, \dots, \beta_{n_1}, e_{n_1})
\end{multline*}
for some $\beta$'s. This shows that $h \notin \Lambda_{(\bar{\Theta}_\eps)}$, a contradiction that proves (i). \\ 
(ii) From Proposition \ref{KepsTheta} (iii), it follows that 
$$K_\eps =  \tau^{-\eps} K_\eps(\tau^{-\eps}) \tau^\eps = \tau^{-\eps} \Lambda_{(\bar{\Theta}_\eps)} \tau^\eps = 
\theta^\eps (\Lambda_{(\bar{\Theta}_\eps)}).$$ 
The assertion follows from relation (R7) and Lemma \ref{HNNgeneration} (i). \\
(iii) is obvious.
\end{proof} 

Now, we want to explore the structure of the quasi-kernels of $\Lambda = \Lambda[I_{-1}, I_1, \iota_{-1}, \iota_1; \Sigma_{-1}, \Sigma_1]$, 
in particular, that of $\Lambda_{(\bar{\Theta}_\eps)}$. 
\par 
First, we note that from Proposition \ref{LambdaK} (ii) and relation (R6), it follows that, for $i \in I_\eps$, 
\begin{multline*} \label{Lconjugations}
\lambda_\eps^i    \tau^{-\eps}   \Lambda_{(\bar{\Theta}_\eps)}   \tau^\eps  (\lambda_\eps^i)^{-1} = 
\lambda_\eps^i K_\eps (\lambda_\eps^i)^{-1} = \\ 
\langle \ \{ \ h(i,  -\eps, i_1, \eps_1, \dots, i_m, \eps_m; \sigma_m) \ | \ m \geq 0, \ h(i,  -\eps, i_1, \eps_1, \dots, i_m, \eps_m; \sigma_m) \in 
\underline{H} \ \} \ \rangle.
\end{multline*} 
It is clear that 
\begin{multline*} 
\Lambda_{(\bar{\Theta}_\eps)} = \\ 
\langle \ \{ \ h(\sigma_{-\eps}) \ | \ \sigma_{-\eps} \in \Gamma_{-\eps} \ \} \ \cup 
\ \underset{i \in I_\eps}{\cup} \lambda_\eps^i    \tau^{-\eps}   \Lambda_{(\bar{\Theta}_\eps)}   \tau^\eps  (\lambda_\eps^i)^{-1} \ \cup \ 
\underset{i \in I'_{-\eps}}{\cup} \lambda_{-\eps}^i    \tau^\eps   \Lambda_{(\bar{\Theta}_{-\eps})}   \tau^{-\eps}  (\lambda_{-\eps}^i)^{-1} \ 
\rangle = \\ 
\langle \ \{ \ h(\sigma_{-\eps}) \ | \ \sigma_{-\eps} \in \Gamma_{-\eps} \ \} \ \cup \mathcal{K}(0, -\eps) \ \rangle.
\end{multline*}
In other words, 
$$\Lambda_{(\bar{\Theta}_\eps)} \cong  \mathcal{K}(0, -\eps) \rtimes \Gamma_{-\eps}.$$
This can be written "recursively" as 
\begin{equation}  \label{HNNrecursive}
K_\eps \cong  [\underset{\#(S'_{-\eps})}{\bigoplus} K_{-\eps} \oplus \underset{\#(S_\eps)}{\bigoplus} K_\eps ] \rtimes \Gamma_{-\eps}.
\end{equation}
This is in a sense a "wreath product" representation. 
\par
Let's denote
\begin{equation*}
\mathcal{H}_\eps(0) = \langle \ \{ \  h(\sigma_{-\eps}) \ | \ \sigma_{-\eps} \in \Gamma_{-\eps} \ \} \ \rangle. 
\end{equation*}
For $n \geq 1$, let 
\begin{multline*}
\mathcal{H}_\eps(n) = \langle \ \{ \ h(i_1, \eps_1, \dots, i_n, \eps_n; \sigma_n) \ | \ h(i_1, \eps_1, \dots, i_n, \eps_n; \sigma_n) \in H_{-\eps} \text{ and } (i_1, \eps_1) \not= (\iota_{-\eps}, \eps) \ \} \ \rangle. 
\end{multline*}
Note that, each $\mathcal{H}_\eps(n)$ is isomorphic to a direct sum of copies of $\Gamma_1$ and $\Gamma_{-1}$. \\ 
Let's also denote 
$$\mathcal{H}_\eps[n] \ = \ \langle \ \mathcal{H}_\eps(0) \cup \mathcal{H}_\eps(1) \cup \dots \cup \mathcal{H}_\eps(n) \ \rangle.$$ 
Relation (R3) implies that $\mathcal{H}_\eps(n) \vartriangleleft \mathcal{H}_\eps[n]$ and that there is an extension
\begin{equation} \label{HNNextension}
\{ 1 \} \longrightarrow \mathcal{H}_\eps(n) \longrightarrow \mathcal{H}_\eps[n] \longrightarrow 
\mathcal{H}_\eps[n-1] \longrightarrow \{ 1 \}.
\end{equation}
The natural embeddings $\mathcal{H}_\eps[m] \hookrightarrow \mathcal{H}_\eps[n]$ give a representation of $\Lambda_{(\bar{\Theta}_\eps)}$ 
as a direct limit of groups 
\begin{equation} \label{Hlim}
\Lambda_{(\bar{\Theta}_\eps)} \ = \ \underset{\underset{n}{\longrightarrow}}{\lim} \mathcal{H}_\eps[n].
\end{equation}

\begin{lem} \label{Kepsamenable}
$K_{-1}$ is amenable if and only if $K_1$ is amenable, if and only if $\Gamma_{-1}$ and $\Gamma_1$ are both amenable, and if and only if 
$\Sigma_{-1}$ and $\Sigma_1$ are both amenable. 
\end{lem}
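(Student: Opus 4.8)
The plan is to show that the four conditions are equivalent by a short cycle of implications, using only the standard permanence properties of amenability: it passes to subgroups and to quotients, an extension of an amenable group by an amenable group is amenable, and an increasing union of amenable groups is amenable. The structural inputs are all available above: the ``wreath product'' relation (\ref{HNNrecursive}), the direct-limit representation (\ref{Hlim}) together with the extensions (\ref{HNNextension}) and the description of $\mathcal{H}_\eps(n)$, and Proposition \ref{KepsTheta}(iii), which gives $\Lambda_{(\bar{\Theta}_\eps)} = \tau^\eps K_{-\eps} \tau^{-\eps}$, so in particular $K_\eps \cong \Lambda_{(\bar{\Theta}_{-\eps})}$ for each $\eps = \pm 1$. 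I also record that, since $\Sigma_{-1}$ and $\Sigma_1$ act on the complementary halves $I_{-1}$ and $I_1$ of $I_{-1} \sqcup I_1$, one has internal direct product decompositions $\Gamma_1 = \Sigma_{-1} \times \Sigma'_1$ and $\Gamma_{-1} = \Sigma'_{-1} \times \Sigma_1$, where $\Sigma'_\eps = (\Sigma_\eps)_{\iota_\eps} \leq \Sigma_\eps$.

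First I would dispose of the equivalence ``$\Gamma_{-1}$ and $\Gamma_1$ are both amenable'' $\iff$ ``$\Sigma_{-1}$ and $\Sigma_1$ are both amenable''. If $\Sigma_{-1}, \Sigma_1$ are amenable, then so are their subgroups $\Sigma'_{-1}, \Sigma'_1$, hence so are the direct products $\Gamma_1 = \Sigma_{-1} \times \Sigma'_1$ and $\Gamma_{-1} = \Sigma'_{-1} \times \Sigma_1$; conversely $\Sigma_{-1}$ is a direct factor of $\Gamma_1$ and $\Sigma_1$ is a direct factor of $\Gamma_{-1}$, so amenability of $\Gamma_{-1}, \Gamma_1$ forces amenability of $\Sigma_{-1}, \Sigma_1$. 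Next, assuming $\Gamma_{-1}$ and $\Gamma_1$ are both amenable, I would deduce that $K_1$ and $K_{-1}$ are amenable exactly as in the proof of Lemma \ref{Qjamenable}: in the direct-limit representation (\ref{Hlim}) the bottom term $\mathcal{H}_\eps(0) \cong \Gamma_{-\eps}$ is amenable, and for $n \geq 1$ the group $\mathcal{H}_\eps(n)$, being a direct sum of copies of $\Gamma_1$ and $\Gamma_{-1}$, is an increasing union of finite direct products of amenable groups and hence amenable; an induction on $n$ using the extension (\ref{HNNextension}) then shows that every $\mathcal{H}_\eps[n]$ is amenable, so the increasing union $\Lambda_{(\bar{\Theta}_\eps)}$ is amenable for each $\eps = \pm 1$, and therefore $K_\eps \cong \Lambda_{(\bar{\Theta}_{-\eps})}$ is amenable.

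Finally I would close the cycle with the relation (\ref{HNNrecursive}). Taken with $\eps = 1$ it exhibits $K_1$ as a semidirect product with quotient $\Gamma_{-1}$ whose normal part contains $\bigoplus_{\#(S'_{-1})} K_{-1}$; since $\#(S'_{-1}) = \#(I'_{-1}) \geq 1$, this yields both an embedding $K_{-1} \hookrightarrow K_1$ and a surjection $K_1 \twoheadrightarrow \Gamma_{-1}$, and symmetrically, with $\eps = -1$, an embedding $K_1 \hookrightarrow K_{-1}$ and a surjection $K_{-1} \twoheadrightarrow \Gamma_1$. Hence $K_1$ is amenable if and only if $K_{-1}$ is amenable, and if either one is amenable then $\Gamma_{-1}$ (a quotient of $K_1$) and $\Gamma_1$ (a quotient of $K_{-1}$) are both amenable. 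Putting the three steps together gives the chain ``$K_{-1}$ amenable $\iff$ $K_1$ amenable $\Rightarrow$ $\Gamma_{-1}, \Gamma_1$ both amenable $\iff$ $\Sigma_{-1}, \Sigma_1$ both amenable $\Rightarrow$ $K_1, K_{-1}$ amenable'', which proves the lemma. I do not expect a genuine obstacle here; the only point demanding care is the bookkeeping of the $\eps \leftrightarrow -\eps$ indices in (\ref{HNNrecursive}), (\ref{Hlim}) and Proposition \ref{KepsTheta}(iii), together with the harmless verification that $\#(S'_\eps) \geq 1$.
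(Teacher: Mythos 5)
Your proof is correct and follows essentially the same route as the paper's: the direct-limit representation (\ref{Hlim}) with the extensions (\ref{HNNextension}) for the direction ``$\Gamma$'s amenable $\Rightarrow$ $K$'s amenable'', and the relation (\ref{HNNrecursive}) for the converse (the paper states this converse in contrapositive form, but it is the same use of the semidirect-product decomposition). The one thing you add is the explicit verification that amenability of $\Gamma_{-1}$ and $\Gamma_1$ is equivalent to amenability of $\Sigma_{-1}$ and $\Sigma_1$ via the decompositions $\Gamma_\eps = \Sigma'_\eps \times \Sigma_{-\eps}$, a step the paper leaves implicit.
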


\begin{proof} 
Assume that $\Gamma_\eps$ is not amenable for some $\eps = \pm 1$. Then, by equation (\ref{HNNrecursive}), it follows that $K_{-\eps}$ is not amenable, so equation (\ref{HNNrecursive}), applied once more, gives the nonamenability of $K_\eps$. 
\par
Conversely, assume that $\Gamma_{-1}$ and $\Gamma_1$ are both amenable. Then, $\mathcal{H}_\eps(n)$ is amenable as a direct sum of copies of 
$\Gamma_{-1}$ and $\Gamma_1$. Also, $\mathcal{H}_\eps[0] = \mathcal{H}_\eps(0) \cong \Gamma_{-\eps}$ is amenable for $\eps = \pm 1$. 
Therefore, an easy induction based on the extension (\ref{HNNextension}), gives the amenability of $\mathcal{H}_\eps[n]$ for each $\eps = \pm 1$ 
and each $n \geq 0$. Finally, the direct limit representation (\ref{Hlim}) of $\Lambda_{(\bar{\Theta}_\eps)}$ implies the amenability of 
$\Lambda_{(\bar{\Theta}_\eps)}$ for $\eps = \pm 1$. Since $K_\eps =  \tau^{-\eps} \Lambda_{(\bar{\Theta}_\eps)} \tau^\eps$, 
it is amenable too. 
\end{proof}

\subsection{Group-Theoretic Structure} 

We give a result about the structure of the groups. 

\begin{thm} \label{HNNstructure}
Let's denote $\Lambda = \Lambda[I_{-1}, I_1, \iota_{-1}, \iota_1; \Sigma_{-1}, \Sigma_1]$. Let's assume that: \\ 
\text{(i)} $\Sigma_{-1}$ and $\Sigma_1$ are $2$-transitive, that is, all stabilizers 
$(\Sigma_\eps)_{i_\eps}$ are transitive on the sets $I_\eps \setminus \{ i_\eps \}$ for all $i_\eps \in I_\eps$ and $\eps = \pm 1$; \\ 
\text{(ii)} For each $\eps = \pm 1$, either $\Sigma_\eps = \langle (\Sigma_\eps)_{i_\eps} \ | \ i_\eps \in I_\eps \rangle$ or 
$\Sigma_\eps = \text{Sym}(2)$. 
\par
Then, $\Lambda$ has a simple normal subgroup $\Xi$ for which there is a group extension 
$$1 \longrightarrow \Xi \longrightarrow \Lambda \overset{\eta}{\longrightarrow} 
(\Gamma / [\Gamma, \Gamma]) \wr_\mathbb{Z} \mathbb{Z} 
\longrightarrow 1,$$
where $\eta$ is defined on the generators by 
$$\eta(h(\sigma)) = ((\dots, 0, \dots, 0, ([\sigma],0), 0, \dots, 0, \dots), 0),\ \ \ \eta(\tau) = ((\dots, 0, \dots), 1), \ \ \text{ and }$$ 
$$\eta(h(i_1, \eps_1, \dots, i_n, \eps_n; \sigma_n)) = ((\dots, 0, \dots, 0, ([\sigma_n], \eps_1 + \dots + \eps_n), 0, \dots, 0, \dots), 0).$$
Here, $[\sigma]$ denotes the image of the permutation $\sigma \in \Gamma$ in $\Gamma / [\Gamma, \Gamma]$.
\end{thm}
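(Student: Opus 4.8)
The plan is to mirror, in the HNN setting, the three-move strategy of Theorem \ref{amalgamstructure}: construct the quotient map $\eta$, identify a convenient generating set for $\Xi = \ker\eta$, and then show that every nontrivial element of $\Xi$ has normal closure (in $\Xi$) equal to $\Xi$. Write $A = \Gamma/[\Gamma,\Gamma]$ and realize $A \wr_{\mathbb{Z}} \mathbb{Z}$ as $\bigl(\bigoplus_{\mathbb{Z}} A\bigr) \rtimes \mathbb{Z}$. First I would build a homomorphism $\alpha\colon G \to \bigoplus_{\mathbb{Z}} A \subset A\wr_{\mathbb{Z}}\mathbb{Z}$ on the generators of $G$ by sending $h(\sigma)$ to the function supported at $0$ with value $[\sigma]$ and $h(i_1,\eps_1,\dots,i_n,\eps_n;\sigma_n)$ to the function supported at $\eps_1+\dots+\eps_n$ with value $[\sigma_n]$; because the base group of the wreath product is abelian, relations (R1)--(R6) are respected, so $\alpha$ is well defined. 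Setting $t = \eta(\tau) = ((\dots,0,\dots),1)$, conjugation by $t$ shifts the support of a base element by $\pm 1$, and the required compatibility $t^{-1}\alpha(h)t = \alpha(\theta(h))$ for $h \in H$ is exactly the content of relations (R7)--(R9); Remark \ref{HNNuniversal} then produces $\eta$. Surjectivity is immediate: the image contains $\alpha(h(\sigma))$ for every $\sigma$, hence all functions supported at $0$, it contains $t$, and conjugating the former by powers of $t$ yields functions supported at any coordinate, which together with $t$ generate $A\wr_{\mathbb{Z}}\mathbb{Z}$. Put $\Xi = \ker\eta$; it is normal in $\Lambda$ and the displayed extension holds.

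Next I would pin down a generating set for $\Xi$. Using Lemma \ref{HNNgeneration}, a general element of $\Lambda$ is a product of $\tau^{\pm1}$'s and syllables $h(\dots;\sigma)$, and $\eta$ records the total $\tau$-exponent together with, at each coordinate $p\in\mathbb{Z}$, an accumulated product of the images $[\sigma]$ of the syllables landing at $p$ (their landing coordinate being their own $\tau$-exponent-sum adjusted by the running $\tau$-count). Hence $\Xi$ is generated by the elements $h(\sigma)$ with $[\sigma]=1$ together with the \emph{balanced pairs} $h(i_1,\eps_1,\dots,i_n,\eps_n;\sigma)\,h(j_1,e_1,\dots,j_m,e_m;\sigma^{-1})$ in which the two index-strings have equal $\tau$-exponent-sums $\eps_1+\dots+\eps_n = e_1+\dots+e_m$ (with the evident compatibility of the terminal signs, or $\sigma$ in the common part of $\Gamma_{-1}$ and $\Gamma_1$), and the degenerate balanced pairs where one factor is an $h(\sigma)^{\pm1}$ sitting at coordinate $0$. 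This is the HNN counterpart of the generating set (\ref{Ngeneration}).

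The core of the proof is showing that for arbitrary $a\in\Xi\setminus\{1\}$ one has $\langle\langle a\rangle\rangle_\Xi = \Xi$, and here I would run the same commutator machine as in Theorem \ref{amalgamstructure}. Using (R3) (shorter syllables modify longer ones) and (R6), multiply $a$ on the right by a sufficiently long syllable to obtain a near-commutation $a\,h(\dots) = h(\dots)\,a$; forming a commutator extracts from $\langle\langle a\rangle\rangle_\Xi$ an element of the form $h(\dots;\sigma)h(\dots;\sigma^{-1})$. Iterating commutators against carefully chosen syllables --- some commuting with the current element and some not, as permitted by (R2)/(R3) and the $2$-transitivity hypothesis (i) --- shortens this element to a balanced pair of depth one; conjugating by elements of $\Xi$ and by powers of $\tau$ (via (R6)--(R9), after correcting by an element $h(\sigma'_{-1})h(\sigma'_1)$ of the base with trivial $\eta$-image if needed) then produces the depth-one balanced pairs $h(\iota_\eps,-\eps;\sigma_\eps)h(\dots;\sigma_\eps^{-1})$ for arbitrary $\sigma_\eps\in\Gamma_\eps$. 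Finally the ``$p_k,q_k$-game'' --- commutator identities inside $\Sigma_\eps$ built from the representations of Lemma \ref{expression}, which is precisely where hypothesis (ii) (each $\Sigma_\eps$ is generated by its point stabilizers, or equals $\operatorname{Sym}(2)$) enters, exactly as in Theorem \ref{amalgamstructure} --- yields every $h(\sigma)$ with $[\sigma]=1$, and a last application of (R3) rebuilds every balanced pair from the depth-one ones. Thus $\langle\langle a\rangle\rangle_\Xi$ contains the full generating set, so $\Xi$ is simple.

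The main obstacle I anticipate is the bookkeeping of the signed $\tau$-exponents during these manipulations. In the amalgam case the ``depth'' of a generator was an ordinary non-negative integer; here the invariant controlling membership in $\Xi$ is the signed sum $\eps_1+\dots+\eps_n$, while the invariant governing whether (R3) applies is the number of syllables, and these two must be tracked separately --- one has to lengthen and shorten syllables so that the exponent-sums of the two halves of a balanced pair stay equal while their syllable-lengths grow. The compensating advantage is that $\tau$-conjugation now \emph{shifts} the exponent-sum, so reaching an arbitrary coordinate of $\bigoplus_{\mathbb{Z}} A$ is easy; but verifying at each step that (R7)--(R9) interact correctly with the commutator identities, together with the $\Sigma_\eps$-commutator step, is the delicate part of the argument, just as it was in Theorem \ref{amalgamstructure}.
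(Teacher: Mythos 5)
Your proposal follows the paper's proof essentially step for step: $\eta$ is built exactly as in the paper via Remark \ref{HNNuniversal} with (R7)--(R9) supplying the compatibility $t^{-1}\alpha(h)t=\alpha(\theta(h))$, the kernel is described by a generating set of balanced pairs analogous to (\ref{Ngeneration}), and simplicity is obtained by the same commutator machine as in Theorem \ref{amalgamstructure}, with hypotheses (i) and (ii) entering in the same places. The one concrete shortfall is in your generating set for $\Xi$: besides syllable pairs and elements $h(\sigma)$ at coordinate $0$, one must also include genuine $\tau$-conjugates $\tau^{\eps n}h(\sigma_{-\eps})\tau^{-\eps n}$ at nonzero coordinates, paired either with a syllable $h(\iota_{-\eps},\eps,\dots,\iota_{-\eps},\eps;\sigma_{-\eps}^{-1})$ or taken alone when $\sigma_{-\eps}\in\Gamma_{-\eps}\cap[\Gamma,\Gamma]$. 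These arise because (R7)--(R9) only let you absorb a $\tau^{\eps}$ into a syllable when the permutation lies in $\Gamma_{\eps}$, so for $\sigma_{-\eps}\in\Gamma_{-\eps}$ the conjugate $\tau^{\eps n}h(\sigma_{-\eps})\tau^{-\eps n}$ is not a product of the generators you list; the paper records them as the fifth and sixth sets of (\ref{Xigeneration}) and devotes a separate argument (the element $\gamma''x(\gamma'')^{-1}$ computation) to showing they lie in $\langle\langle a\rangle\rangle_\Xi$. Your sketch's mechanism of conjugating by powers of $\tau$ and correcting by base elements is the right tool for this, but as written the claimed generating set is incomplete and the simplicity argument would not close without treating these elements explicitly.
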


\begin{proof}
We note that it follows from relations (R7), (R8), and (R9) that the action of $\theta$ on an element $h(i_1, \eps_1, \dots, i_n, \eps_n; \sigma_n)$ 
is consistent with the definition of $\eta$ and the multiplication in the wreath product, that is, 
\begin{multline*} 
\eta(\theta(h(i_1, \eps_1, \dots, i_n, \eps_n; \sigma_n))) = \eta(\tau^{-1} h(i_1, \eps_1, \dots, i_n, \eps_n; \sigma_n) \tau) \ = \\  
((\dots, 0, \dots, 0, ([\sigma_n], \eps_1 + \dots + \eps_n - 1), 0, \dots, 0, \dots), 0). 
\end{multline*} 
It is easy to see that, since the commutant is in the kernel, the homomorphism 
$\eta : G \to (\Gamma / [\Gamma, \Gamma]) \wr_\mathbb{Z} \mathbb{Z}$ is well defined by 
$$\eta(g) \ = \ ((\dots, (\prod_{\eps_1 + \dots + \eps_n = m} [ \sigma_n ] , m),  \dots), 0),$$ 
where the products are taken over all the factors $h(i_1, \eps_1, \dots, i_n, \eps_n; \sigma_n)$ of $g$. 
These two observations together with the universal property of the HNN-extensions (Remark \ref{HNNuniversal}) enable us to extend 
$\eta$ to the entire group $\Lambda$. 
\par
Now, notice that if $\lambda = g_1 \tau^{\eps_1} g_2 \tau^{\eps_2} g_3 \tau^{\eps_3} \cdots g_n \tau^{\eps_n} g_{n+1} \ \in \ \Xi$, then 
$\eps_1 + \dots + \eps_n = 0$. 
Thus 
$$\lambda \ = \ g_1 (\tau^{\eps_1} g_2 \tau^{-\eps_1}) (\tau^{\eps_1 + \eps_2} g_3 \tau^{-\eps_1 - \eps_2}) \cdots 
(\tau^{\eps_1 + \eps_2 + \dots + \eps_{n-1}} g_n \tau^{-\eps_1 - \eps_2 - \dots - \eps_{n-1}}) g_{n+1} $$
can be represented as products of $\tau$-conjugates of elements from $G$. 
\par 
Using Lemma \ref{HNNgeneration} (ii), we see that every $\lambda = \tau^n g \tau^{-n}$ can be written as a product of elements of the form 
$\tau^n h(\sigma) \tau^{-n}$ and $\tau^n h(i_1, \eps_1 \dots, i_m, \eps_m; \sigma_m) \tau^{-n}$. The second element equals either 
$\tau^{n-m} h(\sigma_m) \tau^{m-n}$ or $h(j_1, \eps'_1, \dots, j_k, \eps'_k; \sigma_m)$ for some $j_p$'s and $\eps'_p$'s. 
Therefore, it is easy to see that $\Xi$ is generated by the following set 
\begin{multline} \label{Xigeneration}
\{ h(i_1, \eps_1, \dots, i_n, \eps_n; \sigma_n) h(i'_1, \eps_1, \dots, i'_n, \eps_n; \sigma_n^{-1}) \ | \ 
\eps_k = \pm 1, \ i_k, i'_k \in I_{-\eps_k}\ n \in \mathbb{N},\ \sigma_n \in \Gamma_{\eps_n} \} \\ 
\cup \ \{ i, \eps, i_0, -\eps, i_1, \eps, i_2, \eps_2, \dots, i_n, \eps_n; \sigma_n)  
h(\bar{i}, \eps, i'_2, \eps_2, \dots, i_n, \eps_n; \sigma_n^{-1}) \ | \\ 
n \geq 2,\ \eps, \eps_k = \pm 1,\ i_0 \in I_\eps,\ i'_2 \in I_{-\eps_2},\ i_k \in I_{-\eps_k}, \ i, \bar{i} \in I_{-\eps} \} \ \cup \\ 
\{ h(\sigma_\eps) h(i_\eps, -\eps, i_{-\eps}, \eps; \sigma_\eps^{-1}) \ | \ \sigma_\eps \in \Gamma_\eps,\ i_{-\eps} \in I'_\eps, \ 
i_\eps \in I_{-\eps}, \ \eps  = \pm 1 \} \ \cup \\
\{ h(i_1, \eps_1, \dots, i_m, \eps_m, i, \eps, j, -\eps, j_1, \eps'_1, \dots, j_n, \eps'_n; \sigma) \cdot \\
h(i_1, \eps_1, \dots, i_m, \eps_m, j', -\eps, i', \eps, j_1, \eps'_1, \dots, j_n, \eps'_n; \sigma^{-1}) \ | \\ 
i_k \in I_{-\eps_k}, \ j_k \in I_{-\eps'_k}, \ i,i', \in I_{-\eps}, j, j' \in I_\eps, \ \sigma \in \Gamma_{\eps'_n}, \ 
\eps, \eps_k, \eps'_k = \pm 1, \ m, n \in \mathbb{N}_0 \} \ \cup \\ 
\{ \tau^{\eps n} h(\sigma_{-\eps}) \tau^{-\eps n} 
h(\underbrace{\iota_{-\eps}, \eps, \dots,\iota_{-\eps}, \eps }_{n\text{ times}};\sigma_{-\eps}^{-1}) \ | 
\ \sigma_{-\eps} \in \Gamma_{-\eps}, \eps = \pm 1, \ n \in \mathbb{N} \} \ \cup \\ 
\{ \tau^{\eps n} h(\sigma_{-\eps}) \tau^{-\eps n} \ | 
\ n \in \mathbb{N},\ \sigma_{-\eps} \in \Gamma_{-\eps} \cap [\Gamma, \Gamma], \ \eps = \pm 1 \} \ \cup  \ 
\{  h(\sigma) \ | \ \sigma \in [\Gamma, \Gamma] \}
\end{multline}

Take any element $a \in \Xi \backslash \{ 1 \}$. It remains to show that $\langle \langle a \rangle \rangle_\Xi = \Xi$. 	
For big enough $n$, we can find $i_k$'s so that the element $h(i_1, \eps_1, \dots, i_n, \eps_n; \sigma_n)$ does not commute with $a$. So, for any 
$i'_k$'s (different from $i_k$'s) we have 
$$v \ \equiv \ h(i_1, \eps_1, \dots, i_n, \eps_n; \sigma_n) h(i'_1, \eps_1, \dots, i'_n, \eps_n; \sigma_n^{-1}) \ \in \ \Xi,$$
and by relations (R3), (R6), (R8), and (R9), it follows 
\begin{multline*} 
\langle \langle a \rangle \rangle_\Xi \ni b \equiv a v a^{-1} v \ = \\ 
\cdot h(i'_1, \eps_1'', \dots, i'_l, \eps''_l; \sigma_n) h(i_1, \eps_1, \dots, i_n, \eps_n; \sigma_n^{-1}) = \\ 
h(j_1, \eps'_1, \dots, j_m, \eps'_m, j_{m+1}, \eps_n; \sigma_n) h(j'_1, e''_1, \dots, j'_d, e''_d, j'_{d+1}, \eps_n; \sigma_n^{-1}) \cdot \\ 
\cdot h(i_1, \eps_1, \dots, i_n, \eps_n; \sigma_n) h(i'_1, \eps_1, \dots, i'_n, \eps_n; \sigma_n^{-1}). 
\end{multline*} 
for some $j_k$'s, $j'_k$'s, $i'_k$'s, $\eps'_k$'s, $e''_k$'s, and $\eps''_k$'s. \\ 
Now, it is clear that we can find big enough $s$ and appropriate $e_k$'s, $p_k$'s, $l_k$'s, and $q_k$'s, so that 
$h(q_1, l_1, \dots, q_s, l_s; \sigma^{-1})$ commutes with $b$ and $h(p_1, e_1, \dots, p_s, e_s; \sigma)$ does not. Then, 
\begin{multline*} 
\langle \langle a \rangle \rangle_\Xi \ni b' \equiv \\ 
b h(p_1, e_1, \dots, p_s, e_s; \sigma) h(q_1, l_1, \dots, q_s, l_s; \sigma^{-1}) b^{-1} 
h(q_1, l_1, \dots, q_s, l_s; \sigma) h(p_1, e_1, \dots, p_s, e_s; \sigma^{-1}) = \\ 
h(p'_1, e_1, \dots, p'_s, e_s; \sigma) h(p_1, e_1, \dots, p_s, e_s; \sigma^{-1}) \not= 1
\end{multline*}
for some $p'_k$'s. We can adjust $s$ to be a big enough and adjust the 'tail' of $(p_1, e_1, \dots, p_s, e_s)$ so that 
$e_1 + \dots + e_n = 0$. Since the tuples 
$(p_1, e_1, \dots, p_s, e_s)$ and $(p'_1, e_1, \dots, p'_s, e_s)$ are different, it follows from Lemma 
\ref{HNNgeneration} (i) and from the assumption $\eps_1 + \dots + \eps_n = 0$ that 
$$\beta b' \beta^{-1} =  h(p''_1, e'''_1, \dots, p''_k, e'''_k, p'', e_s; \sigma) h(\sigma^{-1}) \ \in \ \langle \langle a \rangle \rangle_\Xi$$
for some $k \in \mathbb{N}$, $p''_l$'s, and $e'''_l$'s, where 
\begin{multline*} 
\Xi \ \ni \ \beta \ = \ \tau^{-e_s} (\lambda_{-e_s}^{p_s})^{-1} \cdots \tau^{-e_1} (\lambda_{-e_s}^{p_1})^{-1} \cdot \\ 
\cdot \prod_{e_k = -1} h(\rho_1^k, w_1^k, \dots, \rho_{t_k}^k, w_{t_k}^k, w, 1; \mu_{-e_k}^{p_k}) \cdot 
\prod_{e_k = 1} h(\bar{\rho}_1^k, \bar{w}_1^k, \dots, \bar{\rho}_{t'_k}^k, \bar{w}_{t'_k}^k, \bar{w}, -1; \mu_{-e_k}^{p_k}), 
\end{multline*} 
and where the last two factors are chosen appropriately. 
This argument does not depend on the 'tail' of $(p_1, e_1, \dots, p_s, e_s)$, therefore we can take $e_s$ 
to be either $1$ or $-1$. 
\par
We conclude that the following are elements of $\langle \langle a \rangle \rangle_\Xi$: 
$$c = h(\sigma_1) h(\iota_1, -1, p_1, e_1, \dots, p_k, e_k, p, 1; \sigma_1^{-1}) \text{ and } 
d = h(\sigma_{-1}) h(\iota_{-1}, 1, q_1, l_1, \dots, q_k, l_k, q, -1; \sigma_{-1}^{-1})$$
for any big enough even number $k$, for any $\sigma_1 \in \Gamma_1$ and $\sigma_{-1} \in \Gamma_{-1}$, and for some $p_m$'s, $q_m$'s, 
$e_m$'s, and $l_m$'s. 
\par
We claim that, in the tuples $(\iota_1, -1, p_1, e_1, \dots, p_k, e_k, p, 1)$ and 
$(\iota_{-1}, 1, q_1, l_1, \dots, q_k, l_k, q, -1)$, the indices $p$, $q$, $p_t$'s, and $q_t$'s can be chosen arbitrary. To see this, consider 
\begin{equation*}
\Xi \ \ni \ f \ = \ 
h(\iota_1, -1, p_1, e_1, \dots, p_t, e_t; \omega_t) h(q_0, -1, q_1, o_1, \dots, q_r, o_r, q, e_t; \omega_t^{-1}), 
\end{equation*} 
where $q_0 \not= \iota_1$ and the second factor is chosen appropriately. Then, by relation (R3), 
$$f c f^{-1} \ = \ h(\sigma_1) h(\iota_1, -1, p_1, e_1, \dots, \omega_t(p_{t+1}), \dots, p_k, e_k, p, 1; \sigma_1^{-1}) \ \in \ 
\langle \langle a \rangle \rangle_\Xi.$$ 
Because of the transitivity and $2$-transitivity of $\Sigma_{-1}$ and $\Sigma_1$, the claim is proven. 
The element $d$ can be manipulated similarly. 
\par 
Now, consider 
$$\Xi \ \ni \ s \ = \ 
h(\iota_{-1}, 1, i_2, \eps_2, \dots, i_t, \eps_t; \omega_t) h(\iota_1, -1, q'_1, o'_1, \dots, q'_r, o'_r, q', e_t; \omega_t^{-1})$$
for an appropriate choice of $q'_l$'s and $p_l$'s so it commutes with $h(\iota_1, -1, p_1, e_1, \dots, p_k, e_k, p, 1; \sigma_1^{-1})$. 
Therefore, 
$$s c s^{-1} c^{-1} \ = \ h(\iota_{-1}, 1, i_2, \eps_2, \dots, i_t, \eps_t; \omega_t) 
h(\sigma_1(\iota_{-1}), 1, i_2, \eps_2, \dots, i_t, \eps_t; \omega_t^{-1}) 
\ \in \ \langle \langle a \rangle \rangle_\Xi,$$ 
so by the transitivity of the group $\Sigma_{-1}$, we see that every element of the form
$$h(\iota_{-1}, 1, i_2, \eps_2, \dots, i_t, \eps_t; \omega_t) h(i_1, 1, i_2, \eps_2, \dots, i_t, \eps_t; \omega_t^{-1})$$ 
belongs to $\langle \langle a \rangle \rangle_\Xi$. Products of such elements yield 
$$h(i'_1, 1, i_2, \eps_2, \dots, i_t, \eps_t; \omega_t) h(i_1, 1, i_2, \eps_2, \dots, i_t, \eps_t; \omega_t^{-1}) \ \in \ 
\langle \langle a \rangle \rangle_\Xi.$$ 
By making the same argument that uses transitivity and $2$-transitivity, we see that we can change the $i_l$ indices of the first factor, so we infer that 
the first set of (\ref{Xigeneration}) belongs to $\langle \langle a \rangle \rangle_\Xi$. 
\par
Consider, for $n \geq 2$, an even number $k$ an appropriate $h(j_1, \eps'_1, \dots, j_k, \eps'_k; \sigma)$ that commutes with 
$h(i_1, \eps_1, i_2, \eps_2, \dots, i_n, \eps_n; \sigma_n)$ and with $h(\iota_{-\eps_1}, \eps_1, i_2, \eps_2, \dots, i_n, \eps_n; \sigma_n^{-1})$ 
and for which 
$$\delta' \equiv \tau^{\eps_1} h(\sigma) \tau^{-\eps_1} h(j_1, \eps'_1, \dots, j_k, \eps'_k; \sigma^{-1})$$ 
belongs to $\Xi$.
Then, 
\begin{multline*} 
\delta' h(i_1, \eps_1, i_2, \eps_2, \dots, i_n, \eps_n; \sigma_n) 
h(\iota_{-\eps_1}, \eps_1, i_2, \eps_2, \dots, i_n, \eps_n; \sigma_n^{-1}) (\delta')^{-1} \ = \\ 
h(\iota_{-\eps_1}, \eps_1, \sigma(\iota_{\eps_1}), -\eps_1, i_1, \eps_1, i_2, \eps_2, \dots, i_n, \eps_n; \sigma_n)  
h(\iota_{-\eps_1}, \eps_1, \sigma(i_2), \eps_2, \dots, i_n, \eps_n; \sigma_n^{-1}) \ \in \ \langle \langle a \rangle \rangle_\Xi.
\end{multline*}
Products of those elements with elements from the first set give all the elements from the second set of (\ref{Xigeneration}), so it is included in 
$\langle \langle a \rangle \rangle_\Xi$. 
\par
The third set of (\ref{Xigeneration}) belongs to $\langle \langle a \rangle \rangle_\Xi$ since its elements are products of $c$ and $d$ with 
elements from the second set. 
\par
A generic element of the fourth set of (\ref{Xigeneration}) can be written as 
\begin{multline} \label{4generic}
h(i_1, \eps_1, \dots, i_m, \eps_m, i, \eps, j, -\eps, \bar{i}, \eps, j_2, \eps'_2, \dots, j_n, \eps'_n; \sigma) \cdot \\ 
h(i_1, \eps_1, \dots, i_m, \eps_m, j', -\eps, i', \eps, \bar{i}, \eps, j_2, \eps'_2, \dots, j_n, \eps'_n; \sigma^{-1}),
\end{multline}
where we have assumed, without loss of generality, that $\eps'_1 = \eps$. We must show that this element belongs to 
$\langle \langle a \rangle \rangle_\Xi$. 
\par 
First, we start with the following element from the first set of (\ref{Xigeneration}) 
\begin{multline*}
 \langle \langle a \rangle \rangle_\Xi \ \ni \ z \ = \ 
h(i_1, \eps_1, \dots, i_m, \eps_m, i, \eps, \iota_{-\eps}, \eps, q, -\eps, j, -\eps, \bar{i}, \eps, j_2, \eps'_2, \dots, j_n, \eps'_n; \sigma) \cdot \\
h(i_1, \eps_1, \dots, i_m, \eps_m, i, \eps, \iota_{-\eps}, \eps, q, -\eps, \iota_\eps, -\eps, \bar{i}, \eps, j_2, \eps'_2, \dots, j_n, \eps'_n; \sigma^{-1}),
\end{multline*}
where $q \in I'_\eps$. 
\par
Next, using Lemma \ref{HNNgeneration} (i) and adopting the notations thereof, we define 
\begin{multline*}
\Xi \ \ni \ \gamma \ = \ \lambda_{-\eps_1}^{i_1} \tau^{\eps_1} \cdots \lambda_{-\eps_m}^{i_m} \tau^{\eps_m} \lambda_{-\eps}^i 
               \tau^{2\eps}  (\lambda_\eps^q)^{-1}  \tau^{-2\eps} (\lambda_{-\eps}^i)^{-1}
\cdot \tau^{-\eps_m} (\lambda_{-\eps_m}^{i_m})^{-1} \cdots \tau^{-\eps_1} (\lambda_{-\eps_1}^{i_1})^{-1} \cdot \\ 
\cdot h(r_1, e_1, \dots, r_{2l-1}, e_{2l-1}, \bar{r}_{-\eps}, \eps; \mu_\eps^q) 
\end{multline*}
for appropriate $r_k$'s and $e_k$'s satisfying $e_1 + \dots + e_{2l-1} + \eps \ = \ 0$ and for 
which the last factor commutes with everything in the next expressions. Then, 
$$\gamma z \gamma^{-1} \ = \ 
h(i_1, \eps_1, \dots, i_m, \eps_m, i, \eps, j, -\eps, \bar{i}, \eps, j_2, \eps'_2, \dots, j_n, \eps'_n; \sigma) 
\cdot \bar{h},$$
where 
\begin{multline*}
\bar{h} \ \equiv \ \gamma 
h(i_1, \eps_1, \dots, i_m, \eps_m, i, \eps, \iota_{-\eps}, \eps, q, -\eps, \iota_\eps, -\eps, \bar{i}, \eps, j_2, \eps'_2, \dots, j_n, \eps'_n; \sigma^{-1}) 
\gamma^{-1} \ = \\ 
\lambda_{-\eps_1}^{i_1} \tau^{\eps_1} \cdots \lambda_{-\eps_m}^{i_m} \tau^{\eps_m} \lambda_{-\eps}^i 
\lambda_{-\eps}^{\bar{i}} \tau^\eps \cdots \lambda_{-\eps'_n}^{j_n} \tau^{\eps'_n} h(\sigma^{-1}) \ \cdot \\ 
\tau^{-\eps'_n} (\lambda_{-\eps'_n}^{j_n})^{-1} \cdots \tau^{-\eps} (\lambda_{-\eps}^{\bar{i}})^{-1} (\lambda_{-\eps}^i)^{-1} 
\tau^{-\eps_m} (\lambda_{-\eps_m}^{i_m})^{-1} \cdots \tau^{-\eps_1} (\lambda_{-\eps_1}^{i_1})^{-1} = \\ 
\lambda_{-\eps_1}^{i_1} \tau^{\eps_1} \cdots \lambda_{-\eps_m}^{i_m} \tau^{\eps_m} \lambda_{-\eps}^i 
h(\bar{i}, \eps, j_2, \eps'_2, \dots, j_n, \eps'_n; \sigma^{-1}) 
(\lambda_{-\eps}^i)^{-1} \tau^{-\eps_m} (\lambda_{-\eps_m}^{i_m})^{-1} \cdots \tau^{-\eps_1} (\lambda_{-\eps_1}^{i_1})^{-1} = \\ 
\lambda_{-\eps_1}^{i_1} \tau^{\eps_1} \cdots \lambda_{-\eps_m}^{i_m} \tau^{\eps_m} 
h(\mu_{-\eps}^i(\bar{i}), \eps, j_2, \eps'_2, \dots, j_n, \eps'_n; \sigma^{-1}) 
\tau^{-\eps_m} (\lambda_{-\eps_m}^{i_m})^{-1} \cdots \tau^{-\eps_1} (\lambda_{-\eps_1}^{i_1})^{-1}. 
\end{multline*}
Likewise, we consider the following element from the first set of (\ref{Xigeneration}) 
\begin{multline*}
 \langle \langle a \rangle \rangle_\Xi \ \ni \ z' \ = \ 
h(i_1, \eps_1, \dots, i_m, \eps_m, j', -\eps, \iota_\eps, -\eps, p, \eps, \iota_{-\eps}, \eps, \mu_{-\eps}^i(\bar{i}), \eps, 
j_2, \eps'_2, \dots, j_n, \eps'_n; \sigma) \cdot \\
h(i_1, \eps_1, \dots, i_m, \eps_m, j', -\eps, \iota_\eps, -\eps, p, \eps, i', \eps, \mu_{-\eps}^i(\bar{i}), \eps, 
j_2, \eps'_2, \dots, j_n, \eps'_n; \sigma^{-1}), 
\end{multline*}
where $p \in I'_{-\eps}$ and define 
\begin{multline*}
\Xi \ \ni \ \gamma' \ = \ \lambda_{-\eps_1}^{i_1} \tau^{\eps_1} \cdots \lambda_{-\eps_m}^{i_m} \tau^{\eps_m} \lambda_\eps^{j'} 
               \tau^{-2\eps}  (\lambda_{-\eps}^p)^{-1}  \tau^{2\eps} (\lambda_\eps^{j'})^{-1}
\cdot \tau^{-\eps_m} (\lambda_{-\eps_m}^{i_m})^{-1} \cdots \tau^{-\eps_1} (\lambda_{-\eps_1}^{i_1})^{-1} \cdot \\ 
\cdot h(r'_1, e_1, \dots, r'_{2l-1}, e_{2l-1}, \bar{r}_{-\eps}, \eps; \mu_{-\eps}^p) 
\end{multline*}
for appropriate $r'_k$'s. Then, 
$$\gamma' z' (\gamma')^{-1} \ = \ \bar{\bar{h}} \cdot 
h(i_1, \eps_1, \dots, i_m, \eps_m, j', -\eps, i', \eps, \mu_{-\eps}^i(\bar{i}), \eps, j_2, \eps'_2, \dots, j_n, \eps'_n; \sigma^{-1}),$$
where 
\begin{multline*}
\bar{\bar{h}} \ \equiv \ \gamma' 
h(i_1, \eps_1, \dots, i_m, \eps_m, j', -\eps, \iota_\eps, -\eps, p, \eps, \iota_{-\eps}, \eps, \mu_{-\eps}^i(\bar{i}), \eps, 
j_2, \eps'_2, \dots, j_n, \eps'_n; \sigma) 
(\gamma')^{-1} \ = \\ 
\lambda_{-\eps_1}^{i_1} \tau^{\eps_1} \cdots \lambda_{-\eps_m}^{i_m} \tau^{\eps_m} 
\lambda_\eps^{j'} h(\mu_{-\eps}^i(\bar{i}), \eps, j_2, \eps'_2, \dots, j_n, \eps'_n; \sigma) (\lambda_\eps^{j'})^{-1} 
\tau^{-\eps_m} (\lambda_{-\eps_m}^{i_m})^{-1} \cdots \tau^{-\eps_1} (\lambda_{-\eps_1}^{i_1})^{-1} = \\ 
\lambda_{-\eps_1}^{i_1} \tau^{\eps_1} \cdots \lambda_{-\eps_m}^{i_m} \tau^{\eps_m} 
h(\mu_\eps^{j'}(\mu_{-\eps}^i(\bar{i})), \eps, j_2, \eps'_2, \dots, j_n, \eps'_n; \sigma) 
\tau^{-\eps_m} (\lambda_{-\eps_m}^{i_m})^{-1} \cdots \tau^{-\eps_1} (\lambda_{-\eps_1}^{i_1})^{-1} = \\ 
= \ (\bar{h})^{-1} 
\end{multline*}
since $\mu_\eps^{j'}(\mu_{-\eps}^i(\bar{i})) = \mu_{-\eps}^i(\bar{i})$, due to relation (R6) and $\mu_{-\eps}^i(\bar{i}) \in I_{-\eps}$. 
Finally, 
\begin{multline*}
\langle \langle a \rangle \rangle_\Xi \ \ni \ \gamma z \gamma^{-1} \cdot \gamma' z' (\gamma')^{-1} \ = \\ 
h(i_1, \eps_1, \dots, i_m, \eps_m, i, \eps, j, -\eps, \bar{i}, \eps, j_2, \eps'_2, \dots, j_n, \eps'_n; \sigma) \cdot \\ 
h(i_1, \eps_1, \dots, i_m, \eps_m, j', -\eps, i', \eps, \mu_{-\eps}^i(\bar{i}), \eps, j_2, \eps'_2, \dots, j_n, \eps'_n; \sigma^{-1}),
\end{multline*}
and after a multiplication with an element from the first set of (\ref{Xigeneration}), we get the element (\ref{4generic}). 
\par 
Therefore, the fourth set of  (\ref{Xigeneration}) is in $\langle \langle a \rangle \rangle_\Xi$. 
\par
Repeating almost verbatim the corresponding part of the proof of Proposition \ref{amalgamstructure} gives us that the seventh set of 
(\ref{Xigeneration}) belongs to $\langle \langle a \rangle \rangle_\Xi$. Note that if $\Sigma_\eps = \text{Sym}(2)$, then 
$[\Sigma_\eps, \Sigma_\eps]$ is the trivial group. 
\par
Next, we take numbers $m > n$ and 
$$\gamma'' \ = \ \tau^{\eps m} h(\sigma'_{-\eps}) \tau^{-\eps m} h(j_1, \eps, \dots, j_{m+1}, \eps, j, -\eps; (\sigma'_{-\eps})^{-1})  \ \in \ \Xi,$$ 
where $\sigma'_{-\eps} \in \Gamma_{-\eps}$, $j_{-\eps} \in I'_{-\eps}$, and $j_\eps \in I'_\eps$, with the relation 
$(\sigma'_{-\eps})^{-1}(\iota_\eps) = q$ for some $q \in I'_\eps$. \\ 
After that, we take the element of $\langle \langle a \rangle \rangle_\Xi$ (it is a product of elements from the second and fourth set) 
\begin{multline*} 
x \ \equiv \ h(\underbrace{\iota_{-\eps}, \eps, \dots, \iota_{-\eps}, \eps}_{m\text{ times}}, 
q, -\eps, \underbrace{\iota_\eps, -\eps, \dots, \iota_\eps, -\eps}_{m-n-1\text{ times}}; \sigma_{-\eps}) \cdot \\ 
\cdot h(\underbrace{\iota_{-\eps}, \eps, \dots, \iota_{-\eps}, \eps}_{m\text{ times}}, 
q, -\eps, \underbrace{\iota_\eps, -\eps, \dots, \iota_\eps, -\eps}_{m\text{ times}}, p, \eps, 
\underbrace{\iota_{-\eps}, \eps, \dots, \iota_{-\eps}, \eps}_{n-1\text{ times}}; \sigma_{-\eps}), 
\end{multline*} 
where $p \in I'_{-\eps}$. Then 
$$\gamma'' x (\gamma'')^{-1} \ = \\ \tau^{\eps n} h(\sigma_{-\eps}) \tau^{-\eps n} \cdot 
h(p, \eps, \underbrace{\iota_{-\eps}, \eps, \dots, \iota_{-\eps}, \eps}_{n-1\text{ times}}; \sigma_{-\eps}) \ 
\in \ \langle \langle a \rangle \rangle_\Xi.$$ 
Therefore, upon a multiplication by an element from the first set of (\ref{Xigeneration}), we infer that the fifth set of (\ref{Xigeneration}) 
belongs to $\langle \langle a \rangle \rangle_\Xi$.
\par
Finally, the argument from Lemma \ref{amalgamstructure} can be used for the sixth set of (\ref{Xigeneration}) the same way it was used for the 
seventh set. 
\par
This completes the proof.
\end{proof}

\begin{remark}
The example introduced in \cite[Section 5]{BIO} corresponds to the case $\Sigma_{-1} \cong \Sigma_1 \cong \text{Sym}(2)$. In the proof of 
\cite[Proposition 5.11]{BIO}, we introduced a group $\underline{\text{int}}\ \Gamma$ and wrongly stated that it is simple. It is clear from 
Theorem \ref{HNNstructure} that, rather, its subgroup $\Xi$ is simple. 
\end{remark}

\begin{remark}
Analogous arguments to the ones in Remark \ref{edgecommutants} can be given for the case of HNN-extensions, with the conclusion 
$$\Xi \ = \ \langle [\Lambda_e, \Lambda_e] \ | \ e \text{ is an edge of } T \rangle.$$ 
\end{remark}

\subsection{Analytic Structure}

\begin{lem} \label{HNNgeneraltype}
The group $\Lambda = \Lambda[I_{-1}, I_1, \iota_{-1}, \iota_1; \Sigma_{-1}, \Sigma_1]$ is a non-ascending HNN-extension and its action on its 
Bass-Serre tree is minimal and of general type. 
\end{lem}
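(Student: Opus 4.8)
The plan is to verify, in turn, that $\Lambda$ is non-ascending and that its action on the Bass-Serre tree $\Theta$ is minimal and of general type, in parallel with the amalgam case of Lemma~\ref{generaltype}. Non-ascendingness is read off directly from equation~(\ref{HNNclasses}): for each $\eps=\pm1$ it gives $\Index[G:H_\eps]=\#(I_\eps)=\#(I'_\eps)+1\geq 2$, since $I'_\eps$ is nonempty. In the presentation $\Lambda=\operatorname{HNN}(G,H,\theta)$ the base subgroup is $H=H_{-1}$ and its image is $\theta(H)=H_1$, so both $H$ and $\theta(H)$ are proper subgroups of $G$, which is precisely the non-ascending condition. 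Minimality follows at once, because (as recalled in Section~2) an HNN-extension acts transitively on the vertices of its Bass-Serre tree, so the single vertex orbit is the whole vertex set of $\Theta$.

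For the general-type assertion I would exhibit two transverse hyperbolic elements. The element $\tau$ is hyperbolic with axis the line $\ell$ through the vertices $\{\tau^n G\}_{n\in\mathbb{Z}}$. Since $\Lambda$ is non-ascending, $H_{-1}$ and $H_1$ are proper in $G$, hence $H_{-1}\cup H_1\subsetneq G$; an explicit element outside the union is $g=h(\sigma)$ with $\sigma=\mu_{-1}^{i}\mu_{1}^{j}$ for some $i\in I'_{-1}$ and $j\in I'_1$, which lies in neither $H_{-1}$ nor $H_1$ because it fixes neither $\iota_{-1}$ nor $\iota_1$. Then $g\tau g^{-1}$ is hyperbolic with axis $g\ell$, and since the four words $\tau^{\pm1}g\tau^{\pm1}$ are all reduced of $\tau$-length $2$ and therefore lie outside $G$, the two vertices $g\tau G$ and $g\tau^{-1}G$ adjacent to $G$ on $g\ell$ are both different from the two vertices $\tau G$ and $\tau^{-1}G$ adjacent to $G$ on $\ell$. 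In a tree this forces $\ell\cap g\ell=\{G\}$, so $\tau$ and $g\tau g^{-1}$ have no common endpoint on $\partial\Theta$ and are transverse; hence the action is of general type. (Alternatively one may invoke the HNN-extension counterpart of \cite[Proposition~19~(ii)]{Harpepreaux}, which draws the same conclusion from non-ascendingness together with minimality.)

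All the steps are routine; the one point that genuinely uses the non-ascending hypothesis — and where care is required — is the transversality. The naive candidates, such as $\tau$ together with $\lambda_{-1}^{i}\tau$, or with $g\tau g^{-1}$ for a $g$ merely outside $H_{-1}$, need not be transverse: their axes can still share an entire ray (this happens, for instance, when the conjugating element lies in $H_1\setminus H_{-1}$). Choosing $g$ outside \emph{both} $H_{-1}$ and $H_1$ is exactly what makes the two axes cross in a single vertex.
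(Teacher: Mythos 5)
Your proof is correct, and its first two steps coincide with the paper's: non-ascendingness is read off from $\Index[G:H_\eps]=\#(I_\eps)\geq 2$ (equivalently $H\neq G\neq\theta(H)$), and minimality follows from transitivity of the action on vertices. The only divergence is in the general-type step: the paper simply invokes \cite[Proposition 20]{Harpepreaux}, which derives general type from minimality together with the non-ascending hypothesis, whereas you construct two transverse hyperbolic elements by hand. Your construction is sound: with $\sigma=\mu_{-1}^i\mu_1^j$ one has $\sigma(\iota_{-1})=i\neq\iota_{-1}$ and $\sigma(\iota_1)=j\neq\iota_1$, so $g=h(\sigma)$ lies in neither $H_{-1}$ nor $H_1$; hence all four words $\tau^{\pm1}g\tau^{\pm1}$ are reduced of $\tau$-length $2$, so by Britton's lemma none lies in $G$, the neighbours of the vertex $G$ on the axes $\ell$ and $g\ell$ are disjoint, the two lines meet only in $G$, and therefore share no end of $\partial\Theta$. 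Your closing remark is also well taken and worth keeping: if $g$ is merely outside $H_{-1}$ but lies in $H_1=\theta(H)$, then $\tau g\tau^{-1}\in H\subset G$, so $g$ fixes the edge from $G$ to $\tau^{-1}G$ and the two axes share at least that edge, which is exactly why the non-ascending hypothesis (both $H_{\pm1}$ proper) is the operative one. The trade-off is the usual one: the citation route is shorter and matches the paper, while your explicit argument is self-contained and makes visible where non-ascendingness enters.
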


\begin{proof} 
Since the action is transitive, it is minimal. Since $H \ \not= \ G \ \not= \ \theta(H)$, it follows that $\Lambda$ is nondegenerate and non-ascending. 
The result now follows from \cite[Proposition 20]{Harpepreaux}. 
\end{proof}

\begin{thm} \label{HNNC*simple}
The HNN-extension $\Lambda = \Lambda[I_{-1}, I_1, \iota_{-1}, \iota_1; \Sigma_{-1}, \Sigma_1]$ has a unique trace. It is $C^*$-simple if and only if 
either one of the groups $\Sigma_{-1}$ and $\Sigma_1$ is non-amenable.
\end{thm}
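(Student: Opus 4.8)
The plan is to run the HNN-extension analogue of the proof of Theorem~\ref{amalgamC*simple}. The two structural facts that drive everything are already in place: by Lemma~\ref{HNNgeneraltype}, $\Lambda$ is a non-ascending HNN-extension whose action on its Bass-Serre tree $\Theta$ is minimal and of general type, and by Proposition~\ref{LambdaK}~(iii) we have $\ker\Lambda=\{1\}$.

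For the unique trace property, I would simply note that the trivial group $\ker\Lambda=\{1\}$ has the unique trace property, so \cite[Theorem~4.19]{BIO} (as recorded in Section~4.1) gives at once that $\Lambda$ has the unique trace property.

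For $C^*$-simplicity, the starting point is the criterion from \cite[Theorem~4.20]{BIO} quoted in Section~4.1: since $\Lambda$ is non-ascending and $\ker\Lambda$ is trivial, $\Lambda$ is $C^*$-simple if and only if $K_{-1}$ or $K_1$ is trivial or non-amenable. First I would rule out triviality: the ``wreath-type'' decomposition~\eqref{HNNrecursive} displays $\Gamma_{-\eps}$ as a quotient of $K_\eps$, and $\Gamma_{-\eps}$ is nontrivial because it contains a copy of one of the groups $\Sigma_\delta$, each of which is a transitive permutation group on a set of size at least $2$; hence neither $K_{-1}$ nor $K_1$ is trivial. The criterion therefore collapses to: $\Lambda$ is $C^*$-simple if and only if $K_{-1}$ or $K_1$ is non-amenable. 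Finally, Lemma~\ref{Kepsamenable} tells us that $K_{-1}$ is amenable if and only if $K_1$ is amenable if and only if both $\Sigma_{-1}$ and $\Sigma_1$ are amenable; negating this equivalence gives that $K_{-1}$ or $K_1$ is non-amenable precisely when $\Sigma_{-1}$ or $\Sigma_1$ is non-amenable, which is the asserted dichotomy.

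The proof is an assembly of results established earlier in Section~4, so I do not expect a genuine obstacle; the only step needing a moment's attention is the observation that the quasi-kernels $K_{\pm1}$ are never trivial, since this is what turns the three-way alternative (``trivial or non-amenable'') of \cite[Theorem~4.20]{BIO} into the clean two-valued statement in the theorem --- and even this is immediate from~\eqref{HNNrecursive}.
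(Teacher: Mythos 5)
Your proposal is correct and follows essentially the same route as the paper: Lemma \ref{HNNgeneraltype} plus triviality of $\ker\Lambda$ to invoke \cite[Theorem 4.19]{BIO} for the unique trace property, then \cite[Theorem 4.20]{BIO} combined with Lemma \ref{Kepsamenable} for the $C^*$-simplicity dichotomy. Your explicit observation that $K_{\pm 1}$ are nontrivial (via the decomposition (\ref{HNNrecursive}) exhibiting the nontrivial group $\Gamma_{-\eps}$ as a quotient of $K_\eps$) is a point the paper's proof leaves implicit, and it is indeed needed to collapse the three-way alternative of \cite[Theorem 4.20]{BIO} to the stated two-valued criterion.
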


\begin{proof}
Lemma \ref{HNNgeneraltype} enables us to apply \cite[Theorem 4.19]{BIO} to conclude that $\Lambda$ has the unique trace property since 
$\ker \Lambda$ is trivial. It also enables us to apply \cite[Theorem 4.20]{BIO} to conclude that $\Lambda$ is $C^*$-simple if and only if $K_{-1}$ and 
$K_1$ are non-amenable, which, by Lemma \ref{Kepsamenable}, is equivalent to the requirement that either one of the groups $\Sigma_{-1}$ and $\Sigma_1$ 
to be non-amenable. 
\end{proof}

Finally, we prove 

\begin{thm} 
The HNN-extension $\Lambda = \Lambda[\Sigma_{-1}, \Sigma_1]$ in not inner amenable. 
\end{thm}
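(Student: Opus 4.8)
The plan is to follow the pattern of the preceding theorem on amalgams. By Lemma~\ref{HNNgeneraltype} the action of $\Lambda$ on its Bass-Serre tree $\Theta$ is of general type, so by Proposition~\ref{fledged} it suffices to show that this action is finitely fledged. First I would fix an arbitrary elliptic $g \in \Lambda \setminus \{ 1 \}$. Since $g$ fixes a vertex of $\Theta$ and every vertex has the form $\gamma G$ with stabilizer $\gamma G \gamma^{-1}$, the element $g$ is conjugate into $G$; finite fledgedness is conjugation-invariant (if $c$ is the center of $\Upsilon_\Theta(g)$ and $\gamma \in \Lambda$, then $\gamma c$ is the center of $\Upsilon_\Theta(\gamma g \gamma^{-1})$), so I may assume $g \in G \setminus \{ 1 \}$, whence $g$ fixes $v(\emptyset) = G$. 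Using Lemma~\ref{HNNgeneration}~(ii) I would write
$$g \ = \ h(\sigma) \prod_{k=1}^{m} h(i_1^k, \eps_{k,1}, \dots, i_{n_k}^k, \eps_{k,n_k}; \sigma_k), \qquad \sigma \in \Gamma,$$
and put $N = \max_{1 \le k \le m} n_k$ (with $N = 0$ when there are no $h(\dots)$-factors).

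The heart of the argument will be the claim that every vertex $v$ of $\Theta$ fixed by $g$ with $\dist(v, G) \ge N+1$ has all of its neighbours fixed by $g$. The neighbour of $v$ on the geodesic from $G$ to $v$ is fixed because $g$ fixes both $G$ and $v$, so the point is to handle a neighbour $w$ with $\dist(w, G) = \dist(v, G) + 1$. Writing these in the normal-form coordinates of the preliminary discussion, $v = v(i_1, \eps_1, \dots, i_n, \eps_n)$ and $w = v(i_1, \eps_1, \dots, i_n, \eps_n, i_{n+1}, \eps_{n+1})$ with $n \ge N+1$, I would invoke Lemma~\ref{HNNaction}~(i),(ii),(iv), together with relation (R6) for the $h(\sigma)$-factor (as in Lemma~\ref{action}~(i) of the amalgam case): each factor $h(i_1^k, \eps_{k,1}, \dots, i_{n_k}^k, \eps_{k,n_k}; \sigma_k)$ either fixes $v$ outright, when the length-$n_k$ prefix of $v$ differs from $(i_1^k, \eps_{k,1}, \dots, i_{n_k}^k, \eps_{k,n_k})$, or else alters only the coordinate in position $n_k+1 \le N+1$; and $h(\sigma)$ alters only the coordinate in position $1$. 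Thus no factor of $g$ ever touches a coordinate in a position larger than $N+1$. Applying the factors of $g$ one at a time, an easy induction shows that the successive images of $v$ and of $w$ agree in positions $1, \dots, N+1$ at every stage and remain unchanged in every later position; since $gv = v$ and $v, w$ share their first $N+1$ coordinates, it follows that $gw = w$, proving the claim.

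Granting the claim, $\Psi_\Theta(g)$ contains no vertex at $\Theta$-distance $\ge N+1$ from $G$, and no midpoint of an edge ever lies in $\Psi_\Theta(g)$ since edge stabilizers fix both endpoints of their edge; therefore $\Psi_\Theta(g)$, and with it $\Upsilon_\Theta(g)$, has finite diameter, so $g$ is finitely fledged. As $g$ was an arbitrary elliptic element, the action of $\Lambda$ on $\Theta$ is finitely fledged, and Proposition~\ref{fledged} then yields that $\Lambda$ is not inner amenable. I expect the only real obstacle to be the bookkeeping in the middle step: one must track the $\tau^{\eps_k}$-pattern of the normal form while a product of $h(\dots)$-generators is applied, and separate the two kinds of neighbour of $v$; as in the amalgam case this verification, though somewhat lengthy, is routine once Lemma~\ref{HNNaction} is available.
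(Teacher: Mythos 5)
Your proposal is correct and follows essentially the same route as the paper: reduce via Lemma \ref{HNNgeneraltype} and Proposition \ref{fledged} to showing the action is finitely fledged, conjugate $g$ into $G$, decompose it by Lemma \ref{HNNgeneration} (ii), and observe that the factors disturb only coordinates in bounded positions, so any fixed vertex deep enough has all of its descendants fixed as well. The paper merely organizes the factors as $h(\sigma)h_{-1}h_{1}$ and splits according to the first sign $\eps_1$ of the vertex, which is a bookkeeping variant of your appeal to Lemma \ref{HNNaction} (i), (ii), (iv).
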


\begin{proof}
Lemma \ref{HNNgeneraltype} allows us to apply Proposition \ref{fledged}, so we need to show that the action of 
$\Lambda = \Lambda[I_{-1}, I_1, \iota_{-1}, \iota_1; \Sigma_{-1}, \Sigma_1]$ on its Bass-Serre is finitely fledged. 
\par 
For this, take any	 elliptic element $g \in \Lambda \setminus \{ 1 \}$. Since $g$ fixes some vertex, it is a conjugate of an element of $G$. 
The finite fledgedness property is conjugation invariant, so we can assume $g \in G \setminus \{ 1 \}$. 
\par 
From Lemma \ref{HNNgeneration} (ii), we can write $g = h(\sigma) h_{-1} h_1$, where $\sigma \in \Gamma$, 
$$h_{-1} = \prod_{k=1}^{m} h(i_1^k, -1, i_2^k, \eps_{k, 2}, \dots, i_{n_k}^k, \eps_{k, n_k}; \sigma_k), \ \ 
h_1 = \prod_{l=m+1}^{r} h(i_1^l, 1, i_2^l, \eps_{l, 2}, \dots, i_{n_l}^l, \eps_{l, n_l}; \theta_l),$$ 
$r \geq m \geq 0, \ \sigma_k \in \Gamma_{\eps_{k, n_k}},\ \theta_l \in \Gamma_{\eps_{l, n_l}}$, and $i_z^p \in I'_{\eps_{p, z}}$. 
We also require $0 \leq n_1 \leq \dots \leq n_m $ and $0 \leq n_{m+1} \leq \dots \leq n_r$. 
\par
Let's assume that $g$ fixes a vertex $v = v(i_1, \eps_1, \dots, i_n, \eps_n)$, where $n \geq \max \{ n_m, n_r \} + 1$, and take 
$w = v(i_1, \eps_1, \dots, i_n, \eps_n, \dots, i_{n+d}, \eps_{n+d})$ for any $d \geq 1$. We note that, $h_{-\eps_1}$ fixes 
$w$ and $h(\sigma) h_{\eps_1}$ modifies only indices with numbers no greater than $\{ n_m, n_r \} + 1 \leq n$. Therefore, 
$$h(\sigma) h_{\eps_1} v = v(i'_1, \eps_1, \dots, i'_n, \eps_n) \text{     and     } 
h(\sigma) h_{\eps_1} w = v(i'_1, \eps_1, \dots, i'_n, \eps_n, i_{n+1}, \eps_{n+1}, \dots, i_{n+d}, \eps_{n+d}),$$ 
for some $i'_k \in I'_{-\eps_k}$. By our assumption, it follows that 
$$v \ = \ g v \ = \ h(\sigma) h_{\eps_1} v \ = \ v(i'_1, \eps_1, \dots, i'_n, \eps_n).$$ 
Thus, $i'_k = i_k$ for all $1 \leq k \leq n$, and therefore $g w = w$. 
\par
This concludes the proof.
\end{proof}

\begin{remark}
Theorems \ref{HNNC*simple} and \ref{HNNstructure} imply: \\ 
If either $\Sigma_{-1}$ or $\Sigma_1$ is non-amenable, then the amenablish radical of $\Lambda$ is trivial. \\ 
If $\Sigma_{-1}$ and $\Sigma_1$ are both amenable, then $\Lambda$ is amenablish. 
\end{remark}

{\bf Acknowledgments:} I want to thank Tron Omland for some useful suggestions.

\bibliographystyle{amsplain}

\end{document}